\DeclareMathOperator{\JL}{\mathsf{J}}
\DeclareMathOperator{\DL}{\mathsf{D}}
\DeclareMathOperator{\JM}{\mathsf{J}_{\mathcal{M}}}
\DeclareMathOperator{\DM}{\mathsf{D}_{\mathcal{M}}}
\DeclareMathOperator{\JK}{\mathsf{J}_{\mathcal{K}}}
\DeclareMathOperator{\DK}{\mathsf{D}_{\mathcal{K}}}
\DeclareMathOperator{\FD}{\mathcal{D}_{\mathit{f}}}
\DeclareMathOperator{\M}{\mathcal{M}}
\DeclareMathOperator{\BDL}{\mathcal{DL}}
\DeclareMathOperator{\FM}{\mathcal{M}_{\mathit{f}}}
\DeclareMathOperator{\FK}{\mathcal{K}_{\mathit{f}}}
\DeclareMathOperator{\K}{\mathcal{K}}
\DeclareMathOperator{\FP}{\mathcal{P}_{\mathit{f}}}
\DeclareMathOperator{\FPM}{\mathcal{PM}_{\mathit{f}}}
\DeclareMathOperator{\FPK}{\mathcal{PK}_{\mathit{f}}}
\DeclareMathOperator{\V}{\mathcal{V}}
\DeclareMathOperator{\D}{\mathbf{D}}
\DeclareMathOperator{\A}{\mathbf{A}}
\begin{document}

\newtheorem{theorem}{Theorem}
\newtheorem{algorithm}[theorem]{Algorithm}
\newtheorem{fact}[theorem]{Fact}
\newtheorem{problem}[theorem]{Problem}
\newtheorem{claim}[theorem]{Claim}
\newtheorem{lemma}[theorem]{Lemma}
\newtheorem{example}[theorem]{Example}
\newtheorem{corollary}[theorem]{Corollary}
\newtheorem{proposition}[theorem]{Proposition}
\newtheorem{notation}[theorem]{Notation}
\newtheorem{remark7}[theorem]{Remark}
\newtheorem{conjecture}[theorem]{Conjecture}
\theoremstyle{definition}
\newtheorem{definition}[theorem]{Definition}
\newtheorem{remark}[theorem]{Remark}

\title{Unification and Projectivity\\in De Morgan and Kleene Algebras}
\author{Simone Bova and Leonardo Cabrer}
\date{}

\maketitle

\begin{abstract}
We provide a complete classification of solvable instances of the equational unification problem over De Morgan and Kleene algebras 
with respect to unification type.  The key tool is a combinatorial characterization of finitely generated projective De Morgan and Kleene algebras.  
\end{abstract}

\section{Introduction}

A \emph{De Morgan} algebra 
is a bounded distributive lattice with an involution satisfying De Morgan laws;  
that is, a unary operation satisfying $x=x''$ and $x \wedge y=(x' \vee y')'$.  
A \emph{Kleene} algebra is a De Morgan algebra satisfying $x\wedge x' \leq y \vee y'$.  
In \cite{K58}, Kalman shows that 
the lattice of (nontrivial) varieties of De Morgan algebras is a three-element chain 
formed by Boolean algebras (Kleene algebras satisfying $x\wedge x'=0$), 
Kleene algebras, and De Morgan algebras, 
and these variety are locally finite; that is, finite, finitely presented, 
and finitely generated algebras coincide.  

In a variety of algebras, the symbolic (equational) unification problem is the problem of solving finite 
systems of equations over free algebras.  An instance of the symbolic unification problem is a finite system of equations, 
and a solution (a unifier) is an assignment of the variables to terms such that the system holds identically in the variety.  
The set of unifiers of a solvable instance supports a natural order, 
and the instances are classified depending on the properties of their maximal unifiers.  
In this paper, we provide a complete (first-order, decidable) classification of solvable instances 
of the unification problem over De Morgan and Kleene algebras with respect to their unification type.  

The key tool towards the classification is a combinatorial (first-order, decidable) characterization 
of finitely generated projective De Morgan and Kleene algebras, motivated by 
the nice theory of algebraic (equational) unification introduced by Ghilardi \cite{G97}.  
In the algebraic unification setting, 
an instance of the unification problem is a finitely presented algebra in a certain variety, 
a unifier is a homomorphism to a finitely presented projective algebra in the variety, 
and unifiers support a natural order that determines the unification type of the instance, 
in such a way that it coincides with the unification type of its finite presentation, 
viewed as an instance of the symbolic unification problem.

Even if projective Boolean algebras have been characterized in \cite{BH70,S51}, 
a complete characterization of projective De Morgan and Kleene algebras lacks in the literature.  
In this note, also motivated by an effective application of the algebraic unification framework, 
we initiate the study of projective De Morgan and Kleene algebras, and relying on finite duality theorems \cite{CF77}, 
we provide a combinatorial characterization of finitely generated projective algebras, 
and we exploit it to classify all solvable instances of the equational unification problem 
over De Morgan and Kleene algebras with respect to their unification type; 
in particular, we establish that De Morgan and Kleene algebras 
have nullary equational unification type (and avoid the infinitary type). 

The paper is organized as follows.  In Section~\ref{sect:prel}, 
we collect from the literature the background on projective algebras, duality theory, 
and unification theory necessary for the rest of the paper.  For standard undefined notions and facts 
in order theory, universal algebra, category theory, and unification theory, we refer the reader to 
\cite{DP02}, \cite{MMT87}, \cite{ML98}, and \cite{BS01} respectively.  In Section~\ref{sect:fpdmk}, 
we introduce the characterization of finite projective De Morgan and Kleene algebras.  
In Section~\ref{sect:fpdmk}, we introduce the characterization of finite projective De Morgan and Kleene algebras 
(respectively, Theorem~\ref{th:mmain} and Theorem~\ref{th:kmain}).  
In Section~\ref{sect:unif}, we provide complete classification with respect to unification type 
of all solvable instances of the equational unification problem 
over bounded distributive lattices, Kleene algebras, and De Morgan algebras 
(respectively, Theorem~\ref{Thm:UnifLat}, Theorem~\ref{Theo:UnifClassKleene}, 
and Theorem~\ref{Theo:UnifClassDeMorgan}).  The distributive lattices case 
tightens previous results by Ghilardi \cite{G97}, and outlines the key ideas 
involved in the study of the more demanding cases of Kleene and De Morgan algebras.

\section{Preliminaries}\label{sect:prel} 

Let $\mathbf{P}=(P,\leq)$ be a preorder, that is, $\leq$ is reflexive and transitive.  
If $x$ and $y$ are incomparable in $\mathbf{P}$, we write $x \parallel y$.  Given $X,Y \subseteq P$, we write $X \leq Y$ iff 
$x \leq y$ for all $x \in X$ and $y \in Y$; we freely omit brackets, 
writing for instance $x \leq y,z$ instead of $\{x\} \leq \{y,z\}$.  
If $X \subseteq P$, 
we denote by $(X]$ and $[X)$ respectively the \emph{downset} and \emph{upset} in $P$ 
generated by $X$, namely $(X]=\{y \in P \mid \text{$y \leq x$ for some $x \in X$} \}$ and
$[X)=\{y \in P \mid \text{$y \geq x$ for some $x \in X$} \}$; if $X=\{x\}$ we freely write $(x]$ and $[x)$.  
If $x,y \in P$, we write $[x,y]=\{ z \in P \mid x \leq z \leq y \}$.  
A set $X \subseteq P$ is \emph{directed} if for all $x,y \in X$ there exists $z \in X$ such that $x,y \leq z$.   
We denote minimal elements in $\mathbf{P}$ by $\mathrm{min}(\mathbf{P})=\{ x \in P \mid \text{$y \leq x$ implies $x \leq y$ for all $y \in P$} \}$.
Similarly we denote maximal elements in $\mathbf{P}$ by $\mathrm{max}(\mathbf{P})$.  
Let $\mathbf{P}=(P,\leq)$ and $\mathbf{Q}=(Q,\leq)$ be preorders.  A map $f \colon P \to Q$ 
is \emph{monotone} if $x \leq y$ implies $f(x) \leq f(y)$.  

\subsection{Projective Algebras}


Let $\V$ be a variety of algebras and $\kappa$ be an arbitrary cardinal. An algebra $\mathbf{B} \in \V$
is said to
have the \emph{universal mapping} property for $\kappa $ if there exists  $X \subseteq B$ such that $|X|=\kappa$ and for every $\mathbf{A} \in V$,
and every map $f \colon X \to \mathbf{A}$ there exists a (unique) homomorphism
$g \colon \mathbf{B} \to \mathbf{A}$ extending $f$ (any $x \in X$ is said a \emph{free generator}, 
and $\mathbf{B}$ is said \emph{freely generated} by $X$).  
For every cardinal $\kappa$, there exists a unique algebra with the universal mapping property freely generated by a set of cardinality $\kappa$, 
called the \emph{free} $\kappa$-generated algebra in $\V$, and denoted by $\mathbf{F}_{\V}(\kappa)$.  

Since the varieties of De Morgan and Kleene algebras, in symbols $\M$ and $\K$ respectively, 
are generated by single finite algebras \cite{K58}, they are locally finite, that is, 
finitely generated and finite algebras coincide.  

\begin{example}\label{example:freedm1}
By direct computation, $\mathbf{F}_{\M}(1)$ is the bounded distributive lattice over $\{0,x \wedge x',x,x',x \vee x',1\}$
shown in Figure \ref{Fig:FM1}.
\begin{figure}[h]
\centering
\begin{picture}(0,0)%
\includegraphics{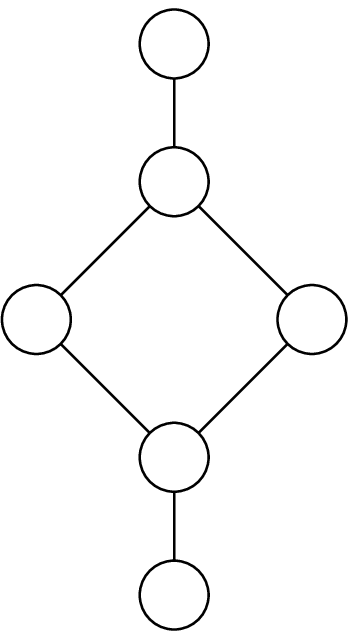}%
\end{picture}%
\setlength{\unitlength}{2901sp}%
\begingroup\makeatletter\ifx\SetFigFont\undefined%
\gdef\SetFigFont#1#2#3#4#5{%
  \reset@font\fontsize{#1}{#2pt}%
  \fontfamily{#3}\fontseries{#4}\fontshape{#5}%
  \selectfont}%
\fi\endgroup%
\begin{picture}(2280,4080)(15061,-5701)
\put(16199,-5516){\makebox(0,0)[b]{\smash{{\SetFigFont{6}{7.2}{\familydefault}{\mddefault}{\updefault}{\color[rgb]{0,0,0}$0$}%
}}}}
\put(17114,-3703){\makebox(0,0)[b]{\smash{{\SetFigFont{6}{7.2}{\familydefault}{\mddefault}{\updefault}{\color[rgb]{0,0,0}$x'$}%
}}}}
\put(15297,-3697){\makebox(0,0)[b]{\smash{{\SetFigFont{6}{7.2}{\familydefault}{\mddefault}{\updefault}{\color[rgb]{0,0,0}$x$}%
}}}}
\put(16198,-1895){\makebox(0,0)[b]{\smash{{\SetFigFont{6}{7.2}{\familydefault}{\mddefault}{\updefault}{\color[rgb]{0,0,0}$1$}%
}}}}
\put(16196,-2795){\makebox(0,0)[b]{\smash{{\SetFigFont{5}{6.0}{\familydefault}{\mddefault}{\updefault}{\color[rgb]{0,0,0}$x\vee x'$}%
}}}}
\put(16200,-4607){\makebox(0,0)[b]{\smash{{\SetFigFont{5}{6.0}{\familydefault}{\mddefault}{\updefault}{\color[rgb]{0,0,0}$x\wedge x'$}%
}}}}
\end{picture}%

\caption{$\mathbf{F}_{\M}(1)$.}\label{Fig:FM1}
\end{figure}
\end{example}

Let $\V$ be a variety of algebras. An algebra $\mathbf{A} \in \V$ is said to be \emph{projective} if for every
pair of algebras $\mathbf{B},\mathbf{C}\in \V$,
every surjective homomorphism $f \colon \mathbf{B} \to\mathbf{C}$,
and every homomorphism $h \colon \mathbf{A} \to\mathbf{C}$,
there exists a homomorphism $g \colon \mathbf{A} \to\mathbf{B}$ such that $f \circ g=h$.

We exploit the following characterization of projective algebras \cite{H63}. 

\begin{theorem}\label{th:projcharOp}
Let $\V$ be a variety, and let $\mathbf{A} \in \V$.  Then,
$\mathbf{A}$ is projective in $\V$ iff
$\mathbf{A}$ is a \emph{retract} of a free algebra $\mathbf{F}_{\V}(\kappa)$ in $\V$ for some cardinal $\kappa$, 
that is, there exist homomorphisms $r \colon \mathbf{F}_{\V}(\kappa) \to \mathbf{A}$ 
and $f \colon \mathbf{A} \to \mathbf{F}_{\V}(\kappa)$ such that $r \circ f=\mathrm{id}_{\mathbf{A}}$.
\end{theorem}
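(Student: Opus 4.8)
The plan is to prove both implications directly from the definitions, using the universal mapping property of free algebras as the main engine.

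For the easier direction ($\Leftarrow$), I would first establish the standard auxiliary fact that every free algebra $\mathbf{F}_{\V}(\kappa)$ is itself projective. Given a surjective homomorphism $p \colon \mathbf{B} \to \mathbf{C}$ and a homomorphism $h \colon \mathbf{F}_{\V}(\kappa) \to \mathbf{C}$, let $X$ be a free generating set. For each free generator $x \in X$, pick (invoking choice) a preimage $b_x \in B$ with $p(b_x)=h(x)$; the assignment $x \mapsto b_x$ extends, by the universal mapping property, to a unique homomorphism $g \colon \mathbf{F}_{\V}(\kappa) \to \mathbf{B}$, and since $p \circ g$ agrees with $h$ on $X$, the uniqueness clause forces $p \circ g = h$. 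Next I would show that a retract of a projective algebra is again projective: if $r \colon \mathbf{F}_{\V}(\kappa) \to \mathbf{A}$ and $f \colon \mathbf{A} \to \mathbf{F}_{\V}(\kappa)$ witness that $\mathbf{A}$ is a retract of the (hence projective) free algebra, then given $p \colon \mathbf{B} \to \mathbf{C}$ surjective and $h \colon \mathbf{A} \to \mathbf{C}$, projectivity of the free algebra lifts $h \circ r$ to some $g' \colon \mathbf{F}_{\V}(\kappa) \to \mathbf{B}$ with $p \circ g' = h \circ r$, and then $g := g' \circ f$ satisfies $p \circ g = p \circ g' \circ f = h \circ r \circ f = h$, as required.

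For the forward direction ($\Rightarrow$), the key step is to exhibit a canonical surjection onto $\mathbf{A}$ from a suitable free algebra. Taking $\kappa = |A|$ and identifying the free generating set with the underlying set $A$ itself, the universal mapping property extends the identity map $A \to A$ to a homomorphism $r \colon \mathbf{F}_{\V}(\kappa) \to \mathbf{A}$; since the image of $r$ is a subalgebra of $\mathbf{A}$ containing the generating set $A$, it is all of $\mathbf{A}$, so $r$ is surjective. Applying the projectivity of $\mathbf{A}$ to the surjection $r$ and to the homomorphism $\mathrm{id}_{\mathbf{A}} \colon \mathbf{A} \to \mathbf{A}$ then yields a homomorphism $f \colon \mathbf{A} \to \mathbf{F}_{\V}(\kappa)$ with $r \circ f = \mathrm{id}_{\mathbf{A}}$, which is precisely the retraction data sought.

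I do not expect a serious obstacle: the argument is entirely formal, turning on the interplay between projectivity and the universal mapping property. The only points that demand care are the (routine) use of the axiom of choice when lifting free generators one at a time in the proof that free algebras are projective, and the verification that $r$ is surjective, which reduces to the observation that the free generators have been chosen to exhaust the underlying set of $\mathbf{A}$, so that $A$ generates $\mathbf{A}$.
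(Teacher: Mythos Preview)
Your argument is correct and is the standard proof of this classical fact. Note, however, that the paper does not supply its own proof: it simply states the characterization and attributes it to Halmos \cite{H63}, so there is nothing to compare against beyond observing that your write-up is exactly the textbook argument one would expect behind that citation.
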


\subsection{Finite Duality} \label{subsect:dualities}

We recall duality theorems for the categories 
of finite bounded distributive lattices, $\FD$, finite De Morgan algebras, $\FM$, 
and finite Kleene algebras, $\FK$.  

First, we present Birkhoff duality between finite bounded distributive 
lattices and finite posets \cite{B37}.  
The category $\FP$ of finite posets has finite posets $(P,\leq)$ as objects, and monotone maps as morphisms.  

Define the map $\JL \colon \FD \to \FP$ as follows:  For every $\mathbf{A}$ in $\FD$, let
$$
\JL(\mathbf{A})=(\{ x \mid \text{$x$ join irreducible in $\mathbf{A}$}\},\leq)\text{,}
$$
where $\leq$ is the order inherited from the order in $\mathbf{A}$.
For every $h \colon \mathbf{A} \to \mathbf{B}$ in $\FD$, let
$\JL(h)\colon\JL(\mathbf{B}) \to \JL(\mathbf{A})$, be the map defined by
$$
\JL(h)(x)=\bigwedge \{ y \mid h(y) \geq x \}\text{,}
$$
for all $x\in \JL(\mathbf{B})$.

Define the map $\DL \colon \FP \to \FD$ as follows:  For every $\mathbf{P}=(P,\leq) \in \FP$, let
$$
\DL(\mathbf{P})=(\{X \subseteq P \mid (X]=X \},\cap,\cup,\emptyset,P)\text{.}
$$
For every $f \colon \mathbf{P} \to \mathbf{Q}$ in $\FP$, let
$\DL(f)\colon \DL(\mathbf{Q}) \to \DL(\mathbf{P})$ be the map defined by
$$
\DL(f)(X)=f^{-1}(X)
$$
for all $X\in \DL(\mathbf{Q})$. 

\begin{theorem}[Birkhoff, \cite{B37}]\label{th:birkhoff}
$\JL$ and $\DL$ are well defined contravariant functors. Moreover, they determine a dual equivalence between $\FD$ and $\FP$.
\end{theorem}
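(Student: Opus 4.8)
The plan is to establish the dual equivalence in three stages: first verify that $\JL$ and $\DL$ are well-defined contravariant functors, then exhibit natural isomorphisms $\mathrm{id}_{\FD} \cong \DL \circ \JL$ and $\mathrm{id}_{\FP} \cong \JL \circ \DL$, and finally invoke the standard categorical criterion that a pair of contravariant functors whose two composites are naturally isomorphic to the respective identity functors constitutes a dual equivalence. The whole argument turns on one fact peculiar to distributive lattices, which I would isolate as a \emph{separation lemma}: if $x$ is join irreducible and $x \leq \bigvee X$ for finite $X$, then $x \leq x_0$ for some $x_0 \in X$ (join irreducible equals join prime in the distributive setting, extended to finite joins by induction).

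For well-definedness of $\DL$ I would note that the downsets of a finite poset are closed under $\cap$ and $\cup$ and contain $\emptyset$ and $P$, so $\DL(\mathbf{P})$ is a sublattice of a powerset and hence a bounded distributive lattice; a monotone $f$ sends downsets to downsets under $f^{-1}$ and preimages commute with $\cap,\cup$, while contravariance is immediate from $(g \circ f)^{-1} = f^{-1} \circ g^{-1}$. The subtler check is $\JL$ on morphisms: given $h \colon \mathbf{A} \to \mathbf{B}$ and $x \in \JL(\mathbf{B})$, the set $\{ y \mid h(y) \geq x\}$ contains $1$ and, since $h$ preserves finite meets, its meet $z$ again satisfies $h(z) \geq x$, so $z$ is the least such $y$; I would then show $z$ is join irreducible by writing $z = a \vee b$, applying $h$ to get $x \leq h(a) \vee h(b)$, and using the separation lemma to force $x \leq h(a)$ or $x \leq h(b)$, whence $z \leq a$ or $z \leq b$ and $z$ is irreducible (and nonzero, as $h(0)=0 \not\geq x$). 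Monotonicity of $\JL(h)$ and contravariant functoriality then follow routinely.

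Granting the separation lemma, the unit $\eta_{\mathbf{A}} \colon a \mapsto \{x \in \JL(\mathbf{A}) \mid x \leq a\}$ is a lattice isomorphism onto $\DL(\JL(\mathbf{A}))$: it is injective because every element of a finite lattice is the join of the join irreducibles beneath it; it is surjective because a downset $X$ of join irreducibles is recovered as the join irreducibles below $\bigvee X$, where the separation lemma is exactly what rules out spurious join irreducibles; it preserves $\wedge$ trivially and $\vee$ again via separation. Dually, the counit $\epsilon_{\mathbf{P}} \colon p \mapsto (p]$ is an order isomorphism of $\mathbf{P}$ onto $\JL(\DL(\mathbf{P}))$, once I identify the join irreducible downsets as precisely the principal ones: any downset is the union of the principal downsets of its maximal elements, so it is join irreducible iff it has a single maximal element.

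The main obstacle, and the only place distributivity is genuinely needed, is the separation lemma together with its payoff that $\eta_{\mathbf{A}}$ is surjective and join preserving; the rest is bookkeeping. The remaining work is to confirm naturality of $\eta$ and $\epsilon$, i.e.\ that the squares assembled from $\JL(h)$ and $\DL(f)$ commute, which I expect to reduce to unwinding the definitions of the two morphism maps against the separation lemma.
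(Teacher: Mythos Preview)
The paper does not prove this theorem at all: it is stated with attribution to Birkhoff \cite{B37} and used as background. Your proposal is a correct and complete outline of the standard proof of finite Birkhoff duality; the separation lemma (join irreducible $=$ join prime in a distributive lattice) is exactly the right hinge, and your verifications of the unit and counit are accurate. Since there is no proof in the paper to compare against, there is nothing further to contrast.
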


Building on the duality for bounded distributive lattices developed by Priestley \cite{P70},
in \cite[Theorem~2.3 and Theorem~3.2]{CF77} Cornish and Fowler present
a duality for De Morgan and Kleene algebras.  We rely on Theorem~\ref{th:birkhoff}, 
to restrict such duality to finite objects.

\begin{definition}[Finite Involutive Posets, $\FPM$ and $\FPK$]
The category $\FPM$ of \emph{finite involutive posets} is defined as follows:
\begin{description}
\item[$\mathrm{Objects:}$] Structures $(P,\leq,i)$, where $(P,\leq)$ is a finite poset, 
and $i \colon P \to P$ is such that $x \leq y$ implies $i(y) \leq i(x)$ and $i(i(x))=x$.
\item[$\mathrm{Morphisms:}$] Maps $f\colon (P,\leq,i)\to (P',\leq',i')$ such that 
$x \leq y$ implies $f(x) \leq' f(y)$ and $f(i(x))=i'(f(x))$.
\end{description}
The category $\FPK$ is the full subcategory of $\FPM$ whose objects 
$(P,\leq,i)$ are such that
$i(x)$ is comparable to $x$ for all $x \in P$.
\end{definition}

The map $\JM \colon \FM \to \FPM$ is defined by: For every $\mathbf{A}=(A,\wedge,\vee,',0,1)$ in $\FM$, let
$$
\JM(\mathbf{A})=(\JL (A,\wedge,\vee,0,1),i),
$$
where $i(x)=\bigwedge (A\setminus \{a'\mid a\in [x)\})$ for each $x\in\JL (A,\wedge,\vee,0,1)$.
Moreover, $\JM(h)=\JL(h)$ for every $h \colon \mathbf{A} \to \mathbf{B}$ in $\mathcal{FM}$.

The map $\DM \colon \FPM \to \FM$ is defined by: For every $\mathbf{P}=(P,\leq,i) \in \FPM$,
$$
\DM(\mathbf{P})=\mathbf{A}=(A,\wedge,\vee,',0,1)
$$
where $(A,\wedge,\vee,0,1)=\DL(P ,\leq)$, and $X'=P \setminus i(X)$.
Moreover, $\DM(f)=\DL(f)$ for every $f \colon \mathbf{P} \to \mathbf{Q}$ in $\FPM$.

We will denote $\JK\colon \FK\to\FPK$ and $\DK\colon \FPK\to \FK$ the restrictions of the functors $\JM$ and $\DM$ 
to the categories $\FK$ and $\FPK$ respectively. 

\begin{theorem}[Cornish and Fowler]\label{th:dualities}
$\JM$ and $\DM$ (respectively, $\JK$ and $\DK$) are well defined contravariant functors. Moreover, they determine a dual equivalence between the categories $\FM$ and $\FPM$  (respectively, $\FK$ and $\FPK$) .
\end{theorem}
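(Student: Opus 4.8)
The plan is to lean on Theorem~\ref{th:birkhoff}, which already provides a dual equivalence between the lattice reducts and the poset reducts, and to reduce the whole statement to the claim that the extra unary data is transported faithfully in both directions and is compatible with the Birkhoff natural isomorphisms. Writing $\JM(\mathbf{A})=(\JL(A,\wedge,\vee,0,1),i)$ and $\DM(\mathbf{P})=(\DL(P,\leq),{}')$, everything concerning lattices and posets is delivered verbatim by Theorem~\ref{th:birkhoff}, so all the genuine work concentrates on the operations $i$ and ${}'$.

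First I would verify that the two constructions produce legitimate objects. For $\DM$, given $(P,\leq,i)\in\FPM$, the negation $X'=P\setminus i(X)$ is an order-reversing involution on $\DL(P,\leq)$: since $i$ reverses $\leq$, the image $i(X)$ of a downset $X$ is an upset, whence $X'$ is again a downset; the De Morgan law follows from $i(X\cup Y)=i(X)\cup i(Y)$ together with De Morgan's law for set complement, and $X''=X$ follows from $i$ being a bijection with $i\circ i=\mathrm{id}$. Hence $\DM(\mathbf{P})\in\FM$. For $\JM$, the substantive point is that $i(x)=\bigwedge(A\setminus\{a'\mid a\in[x)\})$ defines an order-reversing involution on the poset $\JL(A)$ of join-irreducibles. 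The conceptual reason is that negation is a lattice anti-isomorphism of $\mathbf{A}$, hence carries join-irreducibles to meet-irreducibles bijectively; unwinding the formula gives $\{a'\mid a\in[x)\}=(x']$, so $i(x)=\bigwedge\{b\in A\mid b\not\leq x'\}$, and one checks that this meet is join-irreducible and that $i$ so defined reverses the order and squares to the identity.

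Next I would treat functoriality and naturality. On morphisms, $\JM(h)=\JL(h)$ and $\DM(f)=\DL(f)$ are monotone, respectively lattice homomorphisms, by Theorem~\ref{th:birkhoff}; what remains is to check that $\JL(h)$ commutes with the involutions and that $\DL(f)$ commutes with the negations, i.e.\ that they are morphisms in $\FPM$ and $\FM$ respectively. These are direct computations from the definitions of $i$ and ${}'$ and the fact that $h$ preserves negation and $f$ commutes with the involution. For the equivalence itself, I would take the natural isomorphisms furnished by Theorem~\ref{th:birkhoff} between $\mathrm{id}$ and $\DL\circ\JL$, respectively $\mathrm{id}$ and $\JL\circ\DL$, whose components are $a\mapsto\{j\in\JL(A)\mid j\leq a\}$ and $p\mapsto(p]$, and verify that they are in addition $\FM$-morphisms (preserve negation) and $\FPM$-morphisms (commute with $i$). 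Since these components are already isomorphisms of the underlying lattices and posets, it suffices to check a single compatibility identity with the unary operation on each side, namely $\{j\mid j\leq a'\}=P\setminus i(\{j\mid j\leq a\})$ and the analogous identity for $i$.

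Finally, for the Kleene case it is enough to show that the equivalence restricts: that $\mathbf{A}\in\FM$ satisfies $x\wedge x'\leq y\vee y'$ if and only if $\JM(\mathbf{A})$ has $i(p)$ comparable to $p$ for every join-irreducible $p$, and dually that $\DM$ sends $\FPK$ into $\FK$. Translating the Kleene inequality through Birkhoff duality into the downset representation and specialising to principal downsets of join-irreducibles converts it into exactly the comparability condition on $i$; since $\FK$ and $\FPK$ are full subcategories, the restricted dual equivalence between $\FK$ and $\FPK$ then follows formally from the De Morgan case. I expect the main obstacle to be the object-level claim for $\JM$ in the second paragraph: correctly matching join-irreducibles with meet-irreducibles under the anti-automorphism ${}'$ so that the formula for $i$ is seen to land in $\JL(A)$ and to be an involution, together with the parallel verification that the Kleene inequality is captured precisely by comparability of $i(p)$ with $p$. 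Once these two combinatorial computations are in place, the remainder reduces to Theorem~\ref{th:birkhoff}.
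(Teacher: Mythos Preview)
The paper does not supply a proof of this theorem at all: it is stated as a result of Cornish and Fowler \cite{CF77}, obtained by restricting their Priestley-style duality for De Morgan and Kleene algebras to finite objects via Theorem~\ref{th:birkhoff}, and is used as background. So there is no ``paper's own proof'' to compare against.

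That said, your sketch is the correct way to establish the result directly, and it matches the spirit of how the paper frames things (building on Theorem~\ref{th:birkhoff} and adding the involution layer). Your identification of the two genuinely nontrivial points---that the formula for $i$ on join-irreducibles lands in $\JL(A)$ and defines an order-reversing involution, and that the Kleene inequality corresponds exactly to comparability of $p$ with $i(p)$---is accurate. One small correction: in your computation you write $\{a'\mid a\in[x)\}=(x']$, but since $'$ is order-reversing and $[x)$ is an upset, the image $\{a'\mid a\in[x)\}$ is the \emph{downset} $(x']$, so $A\setminus\{a'\mid a\in[x)\}=A\setminus(x']$ and $i(x)=\bigwedge\{b\mid b\nleq x'\}$; you then need the standard Birkhoff fact that in a finite distributive lattice this meet, taken over the complement of a principal ideal generated by a meet-irreducible $x'$, is the unique join-irreducible corresponding to $x'$ under the bijection between $\JL(A)$ and meet-irreducibles. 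With that in hand the rest of your outline goes through.
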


Let $\D=(D,\leq,i) \in \FPM$ be as in Figure~\ref{Fig:JMFM1}.  In light of Example~\ref{example:freedm1}, 
 $\JM(\mathbf{F}_{\M}(1))$ and $\mathbf{D}$ are isomorphic 
via the map $x\wedge x' \mapsto 2$, $x \mapsto 0$, $x' \mapsto 1$, $1 \mapsto 3$.
\begin{figure}[h]
\centering
\begin{picture}(0,0)%
\includegraphics{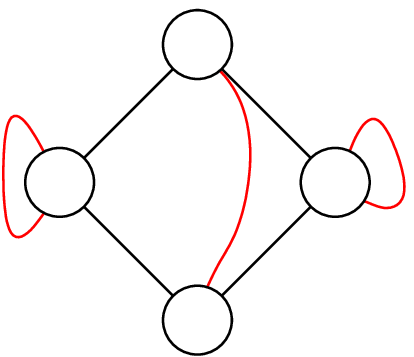}%
\end{picture}%
\setlength{\unitlength}{2901sp}%
\begingroup\makeatletter\ifx\SetFigFont\undefined%
\gdef\SetFigFont#1#2#3#4#5{%
  \reset@font\fontsize{#1}{#2pt}%
  \fontfamily{#3}\fontseries{#4}\fontshape{#5}%
  \selectfont}%
\fi\endgroup%
\begin{picture}(2664,2280)(14911,-4801)
\put(17114,-3703){\makebox(0,0)[b]{\smash{{\SetFigFont{6}{7.2}{\familydefault}{\mddefault}{\updefault}{\color[rgb]{0,0,0}$1$}%
}}}}
\put(15297,-3697){\makebox(0,0)[b]{\smash{{\SetFigFont{6}{7.2}{\familydefault}{\mddefault}{\updefault}{\color[rgb]{0,0,0}$0$}%
}}}}
\put(16200,-4607){\makebox(0,0)[b]{\smash{{\SetFigFont{6}{7.2}{\familydefault}{\mddefault}{\updefault}{\color[rgb]{0,0,0}$2$}%
}}}}
\put(16200,-2797){\makebox(0,0)[b]{\smash{{\SetFigFont{6}{7.2}{\familydefault}{\mddefault}{\updefault}{\color[rgb]{0,0,0}$3$}%
}}}}
\end{picture}%

\caption{$\JM(\mathbf{F}_{\M}(1)) \simeq \D$.  Curved edges depict the map $i \colon D \to D$.}\label{Fig:JMFM1}
\end{figure}

Let $\mathbf{P}=(P,\leq,i) \in \FPM$.  By \cite[Theorem~2.4]{CF77},
the \emph{product} of $n$ copies of $\mathbf{P}$ in the category $\FPM$,
denoted by $$\mathbf{P}^{n}=(P^n,\leq^n,i^n)\text{,}$$ is the finite 
poset over $P^n$ with the order and the involution defined coordinatewise, that is for all $x=(x_1,\dots,x_n), y=(y_1,\dots,y_n)\in P^n$,
$x \leq^n y$ iff $x_i \leq y_i$ for all $i=1,\dots,n$,
and $i^n(x)=(i(x_1),\dots,i(x_n))$.

\begin{proposition}\label{prop:freedm}
$\JM(\mathbf{F}_{\M}(n))\simeq\D^{n}$.
\begin{proof}
 $\mathbf{F}_{\M}(n)$ is 
the coproduct of $n$ copies of $\mathbf{F}_{\M}(1)$.  Therefore, by Theorem~\ref{th:dualities}, 
$\JM(\mathbf{F}_{\M}(n))\simeq\JM(\mathbf{F}_{\M}(1))^{n}\simeq\D^{n}$, the product of $n$ copies of $\JM(\mathbf{F}_{\M}(1))$.   
The statement follows.
\end{proof}
\end{proposition}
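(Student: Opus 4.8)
The plan is to leverage the dual equivalence of Theorem~\ref{th:dualities} together with the universal mapping property of free algebras, reducing the statement to the interplay between coproducts in $\FM$ and products in $\FPM$. The backbone of the argument is the chain of isomorphisms
$$
\JM(\mathbf{F}_{\M}(n)) \simeq \JM(\mathbf{F}_{\M}(1))^{n} \simeq \D^{n}\text{,}
$$
so the work amounts to justifying each link.

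First I would record the purely algebraic fact that, in any variety $\V$, the free algebra on $m+n$ generators is the coproduct $\mathbf{F}_{\V}(m) \sqcup \mathbf{F}_{\V}(n)$. This follows by matching the universal mapping property of $\mathbf{F}_{\V}(m+n)$ against the universal property of the coproduct: a map out of a disjoint union of free generating sets is precisely the same datum as a pair of maps out of the two generating sets, and the induced homomorphisms coincide by uniqueness of extensions. Iterating, $\mathbf{F}_{\M}(n)$ is the $n$-fold coproduct of $\mathbf{F}_{\M}(1)$ in the variety $\M$; since $\M$ is locally finite, this coproduct is finite, hence it is also a coproduct inside the full subcategory $\FM$.

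Next I would invoke the standard fact that a contravariant equivalence of categories sends colimits to limits, so the dual equivalence $(\JM,\DM)$ carries the $n$-fold coproduct $\bigsqcup \mathbf{F}_{\M}(1)$ in $\FM$ to the $n$-fold product $\prod \JM(\mathbf{F}_{\M}(1))$ in $\FPM$. Using the identification $\JM(\mathbf{F}_{\M}(1)) \simeq \D$ recorded via Example~\ref{example:freedm1} and Figure~\ref{Fig:JMFM1}, together with the coordinatewise description of products in $\FPM$ from \cite[Theorem~2.4]{CF77}, the $n$-fold product is exactly $\D^{n}=(D^n,\leq^n,i^n)$. Chaining these identifications yields the claim.

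I do not expect a genuine obstacle here, since the argument is a formal consequence of the duality plus the universal property of free algebras. The only points demanding a little care are (i) checking that the finite coproduct computed in $\FM$ agrees with the one in the ambient variety $\M$, which is handled by local finiteness of $\M$ and fullness of $\FM$ in $\M$, and (ii) confirming that the contravariant equivalence interchanges the $n$-fold coproduct and $n$-fold product rather than merely binary ones, which follows by induction from the binary case or directly from the preservation of all finite colimits by equivalences.
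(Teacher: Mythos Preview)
Your proposal is correct and follows essentially the same approach as the paper: recognize $\mathbf{F}_{\M}(n)$ as the $n$-fold coproduct of $\mathbf{F}_{\M}(1)$, apply the dual equivalence of Theorem~\ref{th:dualities} to turn this into an $n$-fold product in $\FPM$, and then use $\JM(\mathbf{F}_{\M}(1))\simeq\D$. You supply more detail than the paper (the universal-property justification, the local-finiteness check that the coproduct stays in $\FM$), but the route is identical.
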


\begin{figure}[h]
\centering
\begin{picture}(0,0)%
\includegraphics{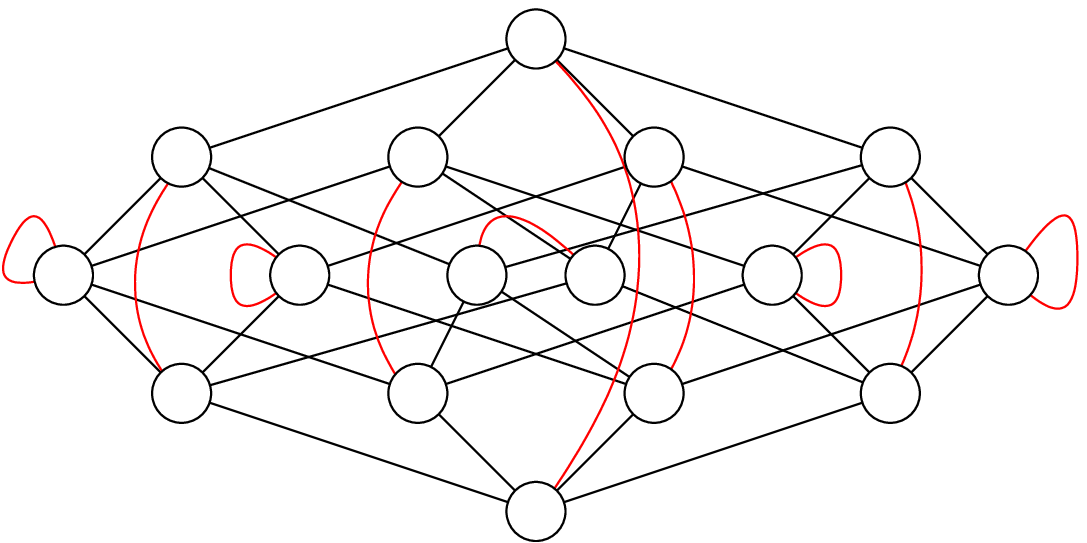}%
\end{picture}%
\setlength{\unitlength}{2486sp}%
\begingroup\makeatletter\ifx\SetFigFont\undefined%
\gdef\SetFigFont#1#2#3#4#5{%
  \reset@font\fontsize{#1}{#2pt}%
  \fontfamily{#3}\fontseries{#4}\fontshape{#5}%
  \selectfont}%
\fi\endgroup%
\begin{picture}(8229,4080)(12119,-4801)
\put(16067,-4614){\makebox(0,0)[lb]{\smash{{\SetFigFont{6}{7.2}{\familydefault}{\mddefault}{\updefault}{\color[rgb]{0,0,0}$22$}%
}}}}
\put(13384,-3720){\makebox(0,0)[lb]{\smash{{\SetFigFont{6}{7.2}{\familydefault}{\mddefault}{\updefault}{\color[rgb]{0,0,0}$02$}%
}}}}
\put(15185,-3711){\makebox(0,0)[lb]{\smash{{\SetFigFont{6}{7.2}{\familydefault}{\mddefault}{\updefault}{\color[rgb]{0,0,0}$20$}%
}}}}
\put(16990,-3708){\makebox(0,0)[lb]{\smash{{\SetFigFont{6}{7.2}{\familydefault}{\mddefault}{\updefault}{\color[rgb]{0,0,0}$21$}%
}}}}
\put(18791,-3698){\makebox(0,0)[lb]{\smash{{\SetFigFont{6}{7.2}{\familydefault}{\mddefault}{\updefault}{\color[rgb]{0,0,0}$12$}%
}}}}
\put(15651,-2819){\makebox(0,0)[lb]{\smash{{\SetFigFont{6}{7.2}{\familydefault}{\mddefault}{\updefault}{\color[rgb]{0,0,0}$23$}%
}}}}
\put(16556,-2819){\makebox(0,0)[lb]{\smash{{\SetFigFont{6}{7.2}{\familydefault}{\mddefault}{\updefault}{\color[rgb]{0,0,0}$32$}%
}}}}
\put(17900,-2823){\makebox(0,0)[lb]{\smash{{\SetFigFont{6}{7.2}{\familydefault}{\mddefault}{\updefault}{\color[rgb]{0,0,0}$10$}%
}}}}
\put(14310,-2823){\makebox(0,0)[lb]{\smash{{\SetFigFont{6}{7.2}{\familydefault}{\mddefault}{\updefault}{\color[rgb]{0,0,0}$01$}%
}}}}
\put(15203,-1914){\makebox(0,0)[lb]{\smash{{\SetFigFont{6}{7.2}{\familydefault}{\mddefault}{\updefault}{\color[rgb]{0,0,0}$30$}%
}}}}
\put(17000,-1910){\makebox(0,0)[lb]{\smash{{\SetFigFont{6}{7.2}{\familydefault}{\mddefault}{\updefault}{\color[rgb]{0,0,0}$31$}%
}}}}
\put(16104,-996){\makebox(0,0)[lb]{\smash{{\SetFigFont{6}{7.2}{\familydefault}{\mddefault}{\updefault}{\color[rgb]{0,0,0}$33$}%
}}}}
\put(18807,-1911){\makebox(0,0)[lb]{\smash{{\SetFigFont{6}{7.2}{\familydefault}{\mddefault}{\updefault}{\color[rgb]{0,0,0}$13$}%
}}}}
\put(19703,-2807){\makebox(0,0)[lb]{\smash{{\SetFigFont{6}{7.2}{\familydefault}{\mddefault}{\updefault}{\color[rgb]{0,0,0}$11$}%
}}}}
\put(12504,-2810){\makebox(0,0)[lb]{\smash{{\SetFigFont{6}{7.2}{\familydefault}{\mddefault}{\updefault}{\color[rgb]{0,0,0}$00$}%
}}}}
\put(13405,-1905){\makebox(0,0)[lb]{\smash{{\SetFigFont{6}{7.2}{\familydefault}{\mddefault}{\updefault}{\color[rgb]{0,0,0}$03$}%
}}}}
\end{picture}%

\caption{$\JM(\mathbf{F}_{\M}(2)) \simeq \D^{2}$.}\label{Fig:JMFM2}
\end{figure}

Let $\mathbf{P}=(P,\leq,i) \in \FPM$.  By \cite{A87},
subobjects of $\mathbf{P}$ are subsets $X \subseteq P$ with the inherited order such that $X=i(X)$.
By Theorem~\ref{th:dualities}, subobjects of $\mathbf{P}$ correspond exactly to quotients on $\DM(\mathbf{P})$.  For each $\mathbf{P}$ in $\FPM$, 
let $\mathbf{P}_k$ be the largest subobject of $\mathbf{P}$ lying in the subcategory $\FPK$, that is,
$\mathbf{P}_k$ is the subobject of $\mathbf{P}$ (possibly empty) such that each element $x$ of $\mathbf{P}_k$ is comparable with $i(x)$.
Therefore, $\DM(\mathbf{P}_k)$ is the largest quotient of $\DM(\mathbf{P})$ lying in $\FK$.

\begin{proposition}\label{prop:freek}
$\JK(\mathbf{F}_{\K}(n))\simeq (\D^{n})_k$.
\begin{proof}
$\mathbf{F}_{\K}(n)$ is the largest quotient of $\mathbf{F}_{\M}(n)$ that is a Kleene algebra.
By Proposition~\ref{prop:freedm}, $\JM(\mathbf{F}_{\M}(n)) \simeq \D^{n}$. Then
by the mentioned correspondence between quotients and subobjects under the duality \cite{A87},
$\JK(\mathbf{F}_{\K}(n))=\JM(\mathbf{F}_{\K}(n))$ arises as the largest subobject of $\D^{n}$ lying in $\FPK$.
\end{proof}
\end{proposition}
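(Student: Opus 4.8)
The plan is to realize $\mathbf{F}_{\K}(n)$ as a distinguished quotient of $\mathbf{F}_{\M}(n)$ and then transport that description across the duality of Theorem~\ref{th:dualities}. First I would invoke the standard fact from universal algebra that, since $\K$ is the subvariety of $\M$ cut out by the single Kleene identity $x \wedge x' \leq y \vee y'$, the free Kleene algebra $\mathbf{F}_{\K}(n)$ is the reflection of $\mathbf{F}_{\M}(n)$ into $\K$: concretely $\mathbf{F}_{\K}(n) \cong \mathbf{F}_{\M}(n)/\theta$, where $\theta$ is the least congruence of $\mathbf{F}_{\M}(n)$ whose quotient satisfies the Kleene identity. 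It follows that $\mathbf{F}_{\K}(n)$ is the \emph{largest} quotient of $\mathbf{F}_{\M}(n)$ lying in $\K$, since any homomorphism from $\mathbf{F}_{\M}(n)$ onto a Kleene algebra factors through the canonical surjection $\mathbf{F}_{\M}(n) \twoheadrightarrow \mathbf{F}_{\K}(n)$ by the universal mapping property of the free generators.

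Next I would dualize. Because $\JM$ and $\DM$ form a contravariant dual equivalence, surjective homomorphisms in $\FM$ are carried to injective morphisms, that is, to subobjects in $\FPM$; this is precisely the quotient--subobject correspondence recalled before the statement and established in \cite{A87}. Applying $\JM$ to the canonical surjection $\mathbf{F}_{\M}(n) \twoheadrightarrow \mathbf{F}_{\K}(n)$ thus exhibits $\JM(\mathbf{F}_{\K}(n))$ as a subobject of $\JM(\mathbf{F}_{\M}(n))$, which by Proposition~\ref{prop:freedm} is isomorphic to $\D^{n}$.

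It then remains to identify this subobject with $(\D^{n})_k$. Here I would use that the duality restricts to a dual equivalence between $\FK$ and $\FPK$, so that a quotient of $\DM(\mathbf{P})$ lies in $\FK$ exactly when the matching subobject of $\mathbf{P}$ lies in $\FPK$, and the (inclusion-reversing) correspondence carries largest quotients to largest subobjects. Since $\mathbf{F}_{\K}(n)$ is the largest Kleene quotient of $\mathbf{F}_{\M}(n) \cong \DM(\D^{n})$, its dual is the largest subobject of $\D^{n}$ lying in $\FPK$, which is by definition $(\D^{n})_k$. As $\JK$ is merely the restriction of $\JM$ to $\FK$, this yields $\JK(\mathbf{F}_{\K}(n)) = \JM(\mathbf{F}_{\K}(n)) \simeq (\D^{n})_k$.

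The main obstacle I anticipate lies in the bookkeeping of this last step: one must confirm that the abstract notion of ``largest Kleene quotient'' of $\mathbf{F}_{\M}(n)$ corresponds, under the contravariant equivalence, to the concrete subobject $(\D^{n})_k$ singled out by requiring each point $x$ to be comparable with $i^{n}(x)$. This reduces to checking that the maximal element in the poset of $\FPK$-subobjects of $\D^{n}$ is exactly the set of such points (note this set is automatically closed under $i^{n}$, hence a genuine subobject), and that its De Morgan dual $\DM\bigl((\D^{n})_k\bigr)$ is both a Kleene algebra and a quotient of $\mathbf{F}_{\M}(n)$; both follow from the description of $\FPK$ as the full subcategory of $\FPM$ determined by the comparability condition, together with Theorem~\ref{th:dualities}.
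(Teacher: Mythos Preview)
Your proposal is correct and follows essentially the same route as the paper: identify $\mathbf{F}_{\K}(n)$ as the largest Kleene quotient of $\mathbf{F}_{\M}(n)$, apply Proposition~\ref{prop:freedm} and the quotient--subobject correspondence under the duality of Theorem~\ref{th:dualities}, and conclude that $\JK(\mathbf{F}_{\K}(n))=\JM(\mathbf{F}_{\K}(n))$ is the largest subobject of $\D^{n}$ in $\FPK$, namely $(\D^{n})_k$. Your write-up simply spells out in more detail the universal-algebraic justification for the first step and the bookkeeping for the last, which the paper leaves implicit.
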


\begin{figure}[h]
\centering
\begin{picture}(0,0)%
\includegraphics{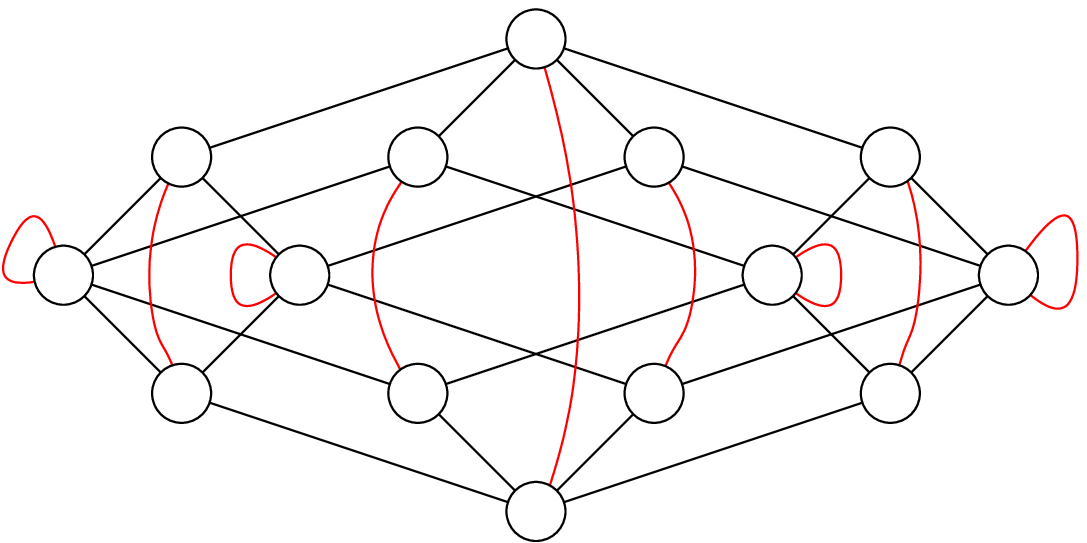}%
\end{picture}%
\setlength{\unitlength}{2486sp}%
\begingroup\makeatletter\ifx\SetFigFont\undefined%
\gdef\SetFigFont#1#2#3#4#5{%
  \reset@font\fontsize{#1}{#2pt}%
  \fontfamily{#3}\fontseries{#4}\fontshape{#5}%
  \selectfont}%
\fi\endgroup%
\begin{picture}(8229,4080)(12119,-4801)
\put(16067,-4614){\makebox(0,0)[lb]{\smash{{\SetFigFont{6}{7.2}{\familydefault}{\mddefault}{\updefault}{\color[rgb]{0,0,0}$22$}%
}}}}
\put(13384,-3720){\makebox(0,0)[lb]{\smash{{\SetFigFont{6}{7.2}{\familydefault}{\mddefault}{\updefault}{\color[rgb]{0,0,0}$02$}%
}}}}
\put(15185,-3711){\makebox(0,0)[lb]{\smash{{\SetFigFont{6}{7.2}{\familydefault}{\mddefault}{\updefault}{\color[rgb]{0,0,0}$20$}%
}}}}
\put(16990,-3708){\makebox(0,0)[lb]{\smash{{\SetFigFont{6}{7.2}{\familydefault}{\mddefault}{\updefault}{\color[rgb]{0,0,0}$21$}%
}}}}
\put(18791,-3698){\makebox(0,0)[lb]{\smash{{\SetFigFont{6}{7.2}{\familydefault}{\mddefault}{\updefault}{\color[rgb]{0,0,0}$12$}%
}}}}
\put(17900,-2823){\makebox(0,0)[lb]{\smash{{\SetFigFont{6}{7.2}{\familydefault}{\mddefault}{\updefault}{\color[rgb]{0,0,0}$10$}%
}}}}
\put(14310,-2823){\makebox(0,0)[lb]{\smash{{\SetFigFont{6}{7.2}{\familydefault}{\mddefault}{\updefault}{\color[rgb]{0,0,0}$01$}%
}}}}
\put(15203,-1914){\makebox(0,0)[lb]{\smash{{\SetFigFont{6}{7.2}{\familydefault}{\mddefault}{\updefault}{\color[rgb]{0,0,0}$30$}%
}}}}
\put(17000,-1910){\makebox(0,0)[lb]{\smash{{\SetFigFont{6}{7.2}{\familydefault}{\mddefault}{\updefault}{\color[rgb]{0,0,0}$31$}%
}}}}
\put(16104,-996){\makebox(0,0)[lb]{\smash{{\SetFigFont{6}{7.2}{\familydefault}{\mddefault}{\updefault}{\color[rgb]{0,0,0}$33$}%
}}}}
\put(18807,-1911){\makebox(0,0)[lb]{\smash{{\SetFigFont{6}{7.2}{\familydefault}{\mddefault}{\updefault}{\color[rgb]{0,0,0}$13$}%
}}}}
\put(19703,-2807){\makebox(0,0)[lb]{\smash{{\SetFigFont{6}{7.2}{\familydefault}{\mddefault}{\updefault}{\color[rgb]{0,0,0}$11$}%
}}}}
\put(12504,-2810){\makebox(0,0)[lb]{\smash{{\SetFigFont{6}{7.2}{\familydefault}{\mddefault}{\updefault}{\color[rgb]{0,0,0}$00$}%
}}}}
\put(13405,-1905){\makebox(0,0)[lb]{\smash{{\SetFigFont{6}{7.2}{\familydefault}{\mddefault}{\updefault}{\color[rgb]{0,0,0}$03$}%
}}}}
\end{picture}%

\caption{$\JK(\mathbf{F}_{\K}(2))\simeq(\D^{2})_k$.}\label{Fig:JMFK2}
\end{figure}

\subsection{Unification Theory}

Let $\mathbf{P}=(P,\leq)$ be a preorder.  A \emph{$\mu$-set} for $\mathbf{P}$
is a subset $M \subseteq P$ such that $x \parallel y$ for all $x,y \in M$ such that $x\neq y$,
and for every $x \in P$ there exists $y \in M$ such that $x \leq y$.  It is easy to check that if $\mathbf{P}$ has a $\mu$-set, 
then every $\mu$-set of $\mathbf{P}$ has the same cardinality.

We say that $\mathbf{P}$ has type:

\begin{itemize}
\item[] \emph{nullary} if $\mathbf{P}$ has no $\mu$-sets (in symbols, $\mathrm{type}(\mathbf{P})=0$);
\item[] \emph{infinitary} if $\mathbf{P}$ has a $\mu$-set of infinite cardinality ($\mathrm{type}(\mathbf{P})=\infty$);
\item[] \emph{finitary} if $\mathbf{P}$ has a finite $\mu$-set of cardinality greater than $1$ ($\mathrm{type}(\mathbf{P})=\omega$);
\item[] \emph{unitary} if $\mathbf{P}$ has a $\mu$-set of cardinality $1$ ($\mathrm{type}(\mathbf{P})=1$).
\end{itemize}

We prepare for later use some easy consequences of the definitions.

\begin{lemma}\label{Lemma:UpsetNullary}
The set $\{0,\omega,\infty,0\}$ carries a natural total order $1\leq \omega \leq \infty \leq 0$. If $\mathbf{P}$ is a preorder and $Q\subseteq P$ be an upset of $\mathbf{P}$ and $\mathbf{Q}$ denotes the preorder with universe $Q$ and relation inherited from $\mathbf{P}$,  then
${\rm type}(\mathbf{Q})\leq{\rm type}(\mathbf{P})$. 
\end{lemma}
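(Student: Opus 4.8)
The plan is to show that intersecting any $\mu$-set of $\mathbf{P}$ with $Q$ produces a $\mu$-set of $\mathbf{Q}$ of no larger cardinality, and then to read off the inequality from the observation recorded just above the statement (that all $\mu$-sets of a preorder are equicardinal) together with the way the order $1 \leq \omega \leq \infty \leq 0$ reflects that cardinality. The order on $\{1,\omega,\infty,0\}$ itself requires no proof, being stipulated by the statement.

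First I would dispose of the case where $\mathbf{P}$ has no $\mu$-set: then ${\rm type}(\mathbf{P})=0$ is the top element of the order, so ${\rm type}(\mathbf{Q}) \leq {\rm type}(\mathbf{P})$ holds regardless of $\mathbf{Q}$. Hence I may assume $M$ is a $\mu$-set of $\mathbf{P}$, and I set $M_Q = M \cap Q$.

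The crux is to verify that $M_Q$ is a $\mu$-set of $\mathbf{Q}$. Since $M_Q \subseteq M$ and $M$ is an antichain of $\mathbf{P}$, and since $\mathbf{Q}$ carries the order inherited from $\mathbf{P}$, the set $M_Q$ is an antichain of $\mathbf{Q}$. For the dominating condition, fix $q \in Q$; as $q \in P$ and $M$ is a $\mu$-set of $\mathbf{P}$, there is $y \in M$ with $q \leq y$, and because $Q$ is an \emph{upset} containing $q$, it follows that $y \in Q$, whence $y \in M_Q$. This is precisely the point at which upward closure of $Q$ is indispensable — without it the dominating witness $y$ could escape $Q$ — and it is the only genuine obstacle in the argument; everything else is bookkeeping. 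In particular $M_Q$ is nonempty whenever $Q$ is, so $\mathbf{Q}$ has a $\mu$-set and ${\rm type}(\mathbf{Q}) \neq 0$.

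It then remains to compare cardinalities, using that $\mu$-sets are equicardinal so that ${\rm type}(\mathbf{Q})$ is determined by $|M_Q|$ and ${\rm type}(\mathbf{P})$ by $|M|$, with $|M_Q| \leq |M|$. If $M$ is infinite, then ${\rm type}(\mathbf{P})=\infty$ and, as $\mathbf{Q}$ has a $\mu$-set, ${\rm type}(\mathbf{Q}) \leq \infty$. If $M$ is finite with $|M|>1$, then $M_Q$ is finite, so ${\rm type}(\mathbf{Q}) \in \{1,\omega\}$ and hence ${\rm type}(\mathbf{Q}) \leq \omega = {\rm type}(\mathbf{P})$. If $|M|=1$, then ${\rm type}(\mathbf{P})=1$ and $|M_Q| \leq 1$; for $Q \neq \emptyset$ the dominating property forces $|M_Q|=1$, giving ${\rm type}(\mathbf{Q})=1$. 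The lone degenerate case $Q=\emptyset$ (which does not arise in our applications, where the relevant preorders are nonempty duals) yields $M_Q=\emptyset$ and is handled by placing the empty preorder at the bottom of the order, so the inequality holds there as well. In every case ${\rm type}(\mathbf{Q}) \leq {\rm type}(\mathbf{P})$, as required.
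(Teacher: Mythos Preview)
Your argument is correct. The paper does not supply a proof of this lemma at all: it is introduced as one of the ``easy consequences of the definitions'' and left to the reader, so there is no published proof to compare against. Your approach---intersecting a $\mu$-set of $\mathbf{P}$ with $Q$ and using upward closure to retain the dominating property---is the natural way to fill in the details, and your handling of the degenerate empty case (which indeed never occurs in the paper's applications, since only solvable instances are considered) is an honest acknowledgement of a boundary case the paper's definitions do not explicitly cover.
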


\begin{lemma}\label{Lemma:DirectedType}
Let $\mathbf{P}=(P,\leq)$ be a directed preorder. Then, ${\rm type}(\mathbf{P})=0$ or ${\rm type}(\mathbf{P})=1$.
\end{lemma}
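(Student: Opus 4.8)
The plan is to establish the dichotomy by ruling out the two ``large'' types: I will show that a directed preorder can never possess a $\mu$-set of cardinality strictly greater than $1$. Once this is done, the only surviving possibilities are that $\mathbf{P}$ carries a $\mu$-set, necessarily of cardinality $1$ (type $1$), or that it carries none (type $0$), which is exactly the claim.

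If $\mathbf{P}$ has no $\mu$-set then $\mathrm{type}(\mathbf{P})=0$ and there is nothing to prove, so I assume a $\mu$-set $M$ is given. The heart of the argument is the following collapse. Suppose, for contradiction, that $x,y \in M$ with $x \neq y$. By the definition of a $\mu$-set these are incomparable, $x \parallel y$. However, directedness supplies a $z \in P$ with $x \leq z$ and $y \leq z$, and then the dominating clause of the $\mu$-set definition supplies a $w \in M$ with $z \leq w$. Transitivity yields $x \leq w$ and $y \leq w$; since $x,w$ and $y,w$ are now comparable pairs in $M$, the incomparability requirement forces $x=w$ and $y=w$, whence $x=y$, a contradiction. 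Hence $|M| \leq 1$, and since the dominating clause applied to any element of $P$ shows $M \neq \emptyset$, we get $|M|=1$ and $\mathrm{type}(\mathbf{P})=1$.

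I do not expect a real obstacle in this argument; the reasoning is a direct exploitation of the interaction between directedness (every pair has an upper bound) and the domination built into the $\mu$-set notion. The only point requiring a word of care is the degenerate case $P=\emptyset$, where the empty $\mu$-set has cardinality $0$ and falls outside the stated type scale; I would simply observe that the preorders of interest (arising as ordered sets of unifiers of solvable instances) are nonempty, so this case does not arise.
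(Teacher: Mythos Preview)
Your argument is correct. The paper states this lemma without proof, introducing it merely as one of the ``easy consequences of the definitions,'' so there is no proof in the paper to compare against; your write-up supplies exactly the straightforward verification the authors omit.

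One minor remark: your handling of the empty case is appropriate for the intended application, but strictly speaking the lemma as stated does not exclude $P=\emptyset$. You could tighten the statement or simply note, as you do, that the empty $\mu$-set falls outside the four listed types and that this degenerate situation never arises for $U_{\mathcal{V}}(\mathbf{A})$ with $\mathbf{A}$ solvable.
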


The algebraic unification theory by Ghilardi \cite{G97}
reduces the traditional symbolic unification problem over an equational theory to the following:

\begin{description}
\item[Problem]  $\textsc{Unif}(\mathcal{V})$.
\item[Instance]  A finitely presented algebra $\mathbf{A} \in \mathcal{V}$.
\item[Solution]  A homomorphism $u \colon \mathbf{A} \to \mathbf{P}$, where $\mathbf{P}$ is a finitely presented projective algebra in $\mathcal{V}$.
\end{description}

A solution to an instance $\A$ is called an \emph{(algebraic) unifier} for $\A$, 
and $\A$ is called \emph{solvable in $\V$} if $\A$ has a solution.

Let $\A \in \V$ be finitely presented,
and for $i=1,2$ let $u_i \colon \A \to \mathbf{P}_i$ be a unifier for $\A$.
Then, $u_1$ is more general than $u_2$, in symbols, $u_2 \leq u_1$,
if there exists a homomorphism $f \colon \mathbf{P}_1 \to \mathbf{P}_2$
such that $f \circ u_1=u_2$. For $\A$ solvable in $\V$, let $U_{\V}(\A)$ be the preorder induced by the generality relation
over the unifiers for $\A$.  We define the type of $\A$ as the type of the preordered set $U_{\V}(\A)$, in symbols
$\mathrm{type}_{\V}(\A)=\mathrm{type}(U_{\V}(\A))$.  

We say that the variety $\V$ has type:
\begin{itemize}
\item[] \emph{nullary} 
if $\{ \mathrm{type}_{\V}(\mathbf{A}) \mid \text{$\mathbf{A}$ solvable in $\V$} \} \cap \{0\} \neq \emptyset$;
\item[] \emph{infinitary} 
if $\infty\in \{ \mathrm{type}_{\V}(\mathbf{A}) \mid \text{$\mathbf{A}$ solvable in $\V$} \}\subseteq \{\infty,\omega,1\}$;
\item[] \emph{finitary} 
if $\omega\in\{ \mathrm{type}_{\V}(\mathbf{A}) \mid \text{$\mathbf{A}$ solvable in $\V$} \}\subseteq \{\omega,1\}$;
\item[] \emph{unitary} 
if $\{ \mathrm{type}_{\V}(\mathbf{A}) \mid \text{$\mathbf{A}$ solvable in $\V$} \}=\{1\}$.
\end{itemize} 

\section{Finite Projective Algebras}\label{sect:fpdmk}

We provide first-order decidable characterizations of the finite involutive 
posets corresponding to finite 
projective De Morgan (Theorem~\ref{th:mmain}) 
and Kleene (Theorem~\ref{th:kmain}) algebras.



\begin{definition}[\cite{BB89}]\label{def:kcompl}
Let $\kappa$ be a cardinal.  A poset $(P,\leq)$ is $\kappa$-complete 
if, whenever $X \subseteq P$ is such that all $Y \subseteq X$ with $|Y|<\kappa$ have an upper bound, 
then $\bigvee X$ exists in $(P,\leq)$.
\end{definition}

\begin{theorem}[De Morgan Projective]\label{th:mmain}
Let $\mathbf{A} \in \FM$.  Then $\mathbf{A}$ is projective in $\M$
iff $\JM(\mathbf{A})=(P,\leq,i) \in \FPM$
satisfies the following:
\begin{enumerate}
\item[$(M_1)$] $(P,\leq)$ is a nonempty lattice;
\item[$(M_2)$] for all $x \in P$, if $x \leq i(x)$, then there exists $y\in P$ such that $x\leq y=i(y)$;
\item[$(M_3)$] $\{x\in P\mid x\leq i(x)\}$ with inherited order is $3$-complete.
\end{enumerate}
\end{theorem}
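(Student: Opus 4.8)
The plan is to combine the retract characterization of projectivity (Theorem~\ref{th:projcharOp}) with the dual equivalence between $\FM$ and $\FPM$ (Theorem~\ref{th:dualities}) and the computation $\JM(\mathbf{F}_{\M}(n))\simeq\D^{n}$ (Proposition~\ref{prop:freedm}), reducing the statement to a purely combinatorial claim in $\FPM$. Since $\M$ is locally finite, a finite $\mathbf{A}$ is $n$-generated for some finite $n$, hence a quotient of $\mathbf{F}_{\M}(n)$; and $\mathbf{A}$ is projective iff this quotient splits, i.e.\ iff $\mathbf{A}$ is a retract of $\mathbf{F}_{\M}(n)$. Applying the contravariant functor $\JM$, which turns a splitting $r\circ f=\mathrm{id}_{\mathbf{A}}$ into $\JM(f)\circ\JM(r)=\mathrm{id}_{\JM(\mathbf{A})}$, this is equivalent to: $\JM(\mathbf{A})=(P,\le,i)$ is a retract, in $\FPM$, of $\D^{n}$ for some finite $n$. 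Thus it suffices to prove that a finite involutive poset is a retract of some $\D^{n}$ in $\FPM$ iff it satisfies $(M_1)$, $(M_2)$, $(M_3)$.

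For the forward direction I would first check that $\D^{n}$ itself satisfies the three conditions: as a poset $\D^{n}\cong\mathbf 2^{2n}$ is a lattice ($(M_1)$); the set $\{w\in\D^{n}:w\le i^{n}(w)\}$ is exactly $\{0,1,2\}^{n}$, a fixed point above a given $w$ being obtained by raising each coordinate $2$ to $0$ ($(M_2)$), and this set is $3$-complete because in $\{0,1,2\}$ only the pair $\{0,1\}$ lacks an upper bound, so coordinatewise pairwise-boundedness forces a coordinatewise supremum ($(M_3)$). Then I would show each condition is inherited by $\FPM$-retracts: writing $s\colon P\to\D^{n}$ and $r\colon\D^{n}\to P$ for section and retraction (both monotone and $i$-commuting, with $r\circ s=\mathrm{id}$), the formula $a\vee_{P}b=r(s(a)\vee s(b))$ and its dual show $P$ is a lattice; pushing an $x$ with $x\le i(x)$ through $s$, choosing a fixed point above $s(x)$ in $\D^{n}$, and pulling it back through $r$ gives $(M_2)$; and since $s,r$ restrict to a retraction between $\{x\in P:x\le i(x)\}$ and $\{0,1,2\}^{n}$, the same device of computing joins as $r$ of the join of sections shows $3$-completeness passes to the retract, giving $(M_3)$.

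For the converse I would first describe the $\FPM$-morphisms into $\D$. Encoding $\D=\mathbf 2^{2}$ with $i(a,b)=(\neg b,\neg a)$, a short calculation shows that the morphisms $(P,\le,i)\to\D$ are exactly the maps $f_{U}(x)=(\mathbb 1_{U}(x),\,\neg\mathbb 1_{U}(i(x)))$ indexed by the upsets $U$ of $P$. Taking one coordinate for every upset, the product $s=(f_{U})_{U}\colon P\to\D^{n}$ is an $\FPM$-morphism whose first coordinates already realize the standard order-embedding of $P$ into $\mathbf 2^{\mathrm{Up}(P)}$; hence $s$ is an embedding and we may identify $P$ with the subobject $s(P)\subseteq\D^{n}$. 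Using $(M_1)$ I then define the candidate retraction $r(w)=\bigvee{}_{P}\{x\in P:x\le w\}$, which is monotone and satisfies $r\circ s=\mathrm{id}$ because $s$ is an order-embedding. Since $r$ is automatically monotone, the only remaining point is that $r$ be a morphism of $\FPM$, i.e.\ that $r\circ i^{n}=i\circ r$.

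This last point is the crux. Using that $i$ is an order-reversing involution of the lattice $P$, one checks that $r(i^{n}(w))=i(r(w))$ for all $w$ is equivalent to the identity
\begin{equation*}
\bigvee{}_{P}\bigl(P\cap(u]\bigr)=\bigwedge{}_{P}\bigl(P\cap[u)\bigr)\qquad\text{for all }u\in\D^{n},\tag{$\star$}
\end{equation*}
where $(u]$ and $[u)$ are taken in $\D^{n}$. The inequality $\le$ in $(\star)$ is immediate, since every element of $P$ above $u$ is an upper bound in $P$ of every element of $P$ below $u$. The reverse inequality is where $(M_2)$ and $(M_3)$ must enter: I expect $3$-completeness of $\{x\in P:x\le i(x)\}$ to guarantee that the left-hand join of $(\star)$ is attained inside the relevant ``positive'' part rather than overshooting into the gap between the two sides, while $(M_2)$ supplies the fixed points controlling the coordinates on which $u$ is $i^{n}$-fixed (equivalently, the elements of $P$ that map into $\{0,1\}^{n}$). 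Verifying $(\star)$ — reconciling the join computed in $P$ with the order of $\D^{n}$ through the completeness and fixed-point hypotheses — is the main obstacle; everything else is routine transport along the duality. Once $(\star)$ holds, $r$ is an $\FPM$-retraction of $\D^{n}$ onto $P$, so $\mathbf{A}=\DM(P)$ is a retract of $\mathbf{F}_{\M}(n)=\DM(\D^{n})$ and hence projective.
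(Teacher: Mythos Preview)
Your reduction to a retract problem in $\FPM$ and your forward direction are fine and match the paper's argument. The gap is in the converse: your candidate retraction $r(w)=\bigvee_P\{x\in P:x\le w\}$ does \emph{not} commute with the involution, because $(\star)$ is false in general even under $(M_1)$--$(M_3)$.

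Here is a small counterexample. Take $P=\{22,00,33\}\subseteq\D^{2}$, the $3$-chain $22<00<33$ with $i(22)=33$, $i(00)=00$, $i(33)=22$; this is $i$-closed and satisfies $(M_1)$--$(M_3)$. For the fixed point $u=01\in\{0,1\}^{2}$ one has $P\cap(u]=\{22\}$ and $P\cap[u)=\{33\}$, so $\bigvee_P(P\cap(u])=22\neq 33=\bigwedge_P(P\cap[u))$, and $(\star)$ fails. Equivalently $r(01)=22$ while $i(r(01))=33\neq 22=r(i^{2}(01))$. The same phenomenon occurs for your specific embedding $s=(f_U)_U$: with the four upsets of $P$ one finds $s(22)=(0,2,2,1)$, $s(00)=(0,0,1,1)$, $s(33)=(0,3,3,1)$, and at the fixed point $u=(0,0,0,1)\in\{0,1\}^{4}$ again $L_u=\{22\}$, $U_u=\{33\}$. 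The point is structural: whenever a fixed point $z$ of $\D^{n}$ is incomparable in $\D^n$ to the image of every fixed point of $P$, your $r(z)$ lands strictly below all fixed points of $P$ and cannot itself be fixed.

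The paper does not try to force $(\star)$. Instead it builds $r$ in two pieces. For each $z\in\{0,1\}^{n}$ it uses $(M_2)$ and $(M_3)$ to show only that $\bigvee_P L_z\le i(\bigvee_P L_z)$, and then \emph{chooses} $r(z)\in P$ to be a fixed point with $\bigvee_P L_z\le r(z)\le\bigwedge_P U_z$; this is where $(M_2)$ is really spent. For $x\notin\{0,1\}^{n}$ it lets $m$ be the first coordinate with $x_m\in\{2,3\}$ and sets $r(x)=\bigvee_P L_x$ if $x_m=2$ and $r(x)=\bigwedge_P U_x$ if $x_m=3$. This coordinate tie-breaker is exactly what makes $r(i^{n}(x))=i(r(x))$ automatic (since $i$ swaps $2\leftrightarrow 3$ in that coordinate and $i(\bigvee_P L_x)=\bigwedge_P U_{i^{n}(x)}$), and monotonicity then follows from a short case analysis. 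So the missing idea is that $r$ cannot be a single lattice-theoretic formula; one must interpolate a fixed point on $\{0,1\}^{n}$ and split the rest of $\D^{n}$ into an ``upper'' and a ``lower'' half using the coordinates.
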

\begin{proof}
There exists $n \in \mathbb{N}$ such that $\JM(\mathbf{A})=(P,\leq,i)=\mathbf{P}$ 
is a subobject of $\JM(\mathbf{F}_{\M}(n))=\mathbf{D}^n=(D^n,\leq,i) \in \FPM$ by Proposition~\ref{prop:freedm}.
That is, it is possible to display $\mathbf{P}$ as a subset of $\mathbf{D}^n$ with inherited order and involution.
Combining this together with Theorem~\ref{th:projcharOp} and Theorem \ref{th:dualities}, $\mathbf{A}$ is projective 
iff there exists an onto morphism $r\colon \mathbf{D}^n\rightarrow \mathbf{P}$ in $\FPM$ such that $r\circ r=r$, that is $r|_P=\mathrm{id}_{\mathbf{P}}$, where $r|_P$ denotes the restriction of $r$ to $P$.  
Therefore, it is sufficient to show that conditions $(M_1)$-$(M_3)$ are necessary and sufficient for the existence of such a map $r$.

Below, $Z=\{ z \in D^n \mid z=i(z) \}=\{0,1\}^{n}$ and $Y=Z \cap P$.

\medskip

$(\Rightarrow)$ Let $r\colon \mathbf{D}^n\rightarrow \mathbf{P}$ be a morphism in $\FPM$ such that $r|_P=\mathrm{id}_{\mathbf{P}}$.  
We show that $\mathbf{P}$ satisfies $(M_1)$, $(M_2)$, and $(M_3)$.

For $(M_1)$:  In particular, $r$ is a poset retraction of $\mathbf{D}^n$ onto $\mathbf{P}$.  
Since $\mathbf{D}^n$ is a nonempty lattice, it follows straightforwardly  that $\mathbf{P}$ is a nonempty lattice.  

For $(M_2)$: Let $y \in P$ be such that $y \leq i(y)$. Then $y\in\{2,0,1\}^{n}$.
Let $z \in Z$ be such that for $i=1,\dots,n$, if $y_i \in \{0,1\}$ then $z_i=y_i$ and $z_i=0$ otherwise.
Then $y \leq z \leq i(y)$, and $y=r(y) \leq r(z)=r(i(z))=i(r(z))$.

For $(M_3)$: Observe that $r((Z])=(Y]_P$, because if $x \leq z$ for $x \in D^n$ and $z \in Z$,
then $r(x) \leq r(z) \in Y$.  Then the restriction of
$r$ to $(Z]$ is a poset retraction of $(Z]$ onto $(Y]_P$. Since $(Z]=\{2,0,1\}^n$ is $3$-complete, 
by \cite[Corollary~2.6]{BB89} $r((Z])=(Y]_P$ is $3$-complete.

\medskip

$(\Leftarrow)$ Assume that $\mathbf{P}$ satisfies $(M_1)$, $(M_2)$, and $(M_3)$.  
We show that there exists a morphism $r\colon \mathbf{D}^n\rightarrow \mathbf{P}$ in $\FPM$ such that $r|_P=\mathrm{id}_{\mathbf{P}}$.  

To define the retraction $r \colon D^n \to P$, we first introduce the following notation. For all $x \in D^n$,
let $L_x = \{ z \in P \mid z \leq x \}$ and $U_x =\{ z \in P \mid x \leq z \}$. Since $i$ is an order reversing involution, 
\begin{equation}\label{Eq:LxUx}
i(L_x)=U_{i(x)}\text{,}
\end{equation}
for each $x\in D^{n}$.

If $x \in Z$, then there exists $y \in Y$ is such that $\bigvee_P L_x \leq y\leq \bigwedge_P U_x$. In fact, 
if $z_1,z_2\in L_x$, then $z_1,z_2\leq x\leq i(z_1),i(z_2)$. By $(M_1)$, $z_1\vee_{P}z_2\leq i(z_1)\wedge_{P}i(z_2)=i(z_1\vee_{P}z_2)$.  
Combining this with $(M_3)$, we have $\bigvee_{P}L_x\leq i(\bigvee_{P}L_x)$.  Now, by $(M_2)$, there exists $y\in Y$ such that 
$\bigvee_{P}L_x\leq y$. Finally by \eqref{Eq:LxUx} and the fact that $x=i(x)$, 
$\bigvee_P L_x \leq y=i(y)\leq i(\bigvee_P L_x )=\bigwedge_P i(L_{x})=\bigwedge_P U_{i(x)}=\bigwedge_P U_{x}$, as desired.

For each $x \in Z$, we fix $r(x)\in Y$ such that
\begin{equation}\label{eq:mstretr1}
\textstyle \bigvee_P L_x \leq r(x)\leq \bigwedge_P U_x\text{.}
\end{equation}
If $x\in D^{n}\setminus Z$,
then let $m$ be the smallest number in $\{1,\dots,n\}$ such that $x_m \in \{2,3\}$.  We define,
\begin{align}\label{eq:mstretr2}
r(x) &=
\begin{cases}
\bigvee_P L_x\text{,} & \text{if $x_m=2$;}\\
\bigwedge_P U_x\text{,} & \text{if $x_m=3$.}
\end{cases}
\end{align}

The map $r\colon D^{n}\rightarrow P$ is well defined.
Also, $r(x)=x$ for all $x \in P$ because $x\in U_x\cap L_x$.  

\medskip

{\it  Claim 1:} For each $x \in D^n$, $r(i(x))=i(r(x))$.

If $x \in Z$, then $r(i(x))=i(r(x))$ holds because $r(x) \in Y$.
Let $x \in D^n \setminus Z$, and $m$ be the smallest number in $\{1,\dots,n\}$ such that $x_m \in \{2,3\}$.
If $x_m=2$ and $(i(x))_m=3$, then $r(x)=\bigvee_P L_x$ and $r(i(x))=\bigwedge_P U_{i(x)}$.
Then by \eqref{Eq:LxUx},
\begin{align*}
r(i(x)) &=\textstyle \bigwedge_P U_{i(x)}=\bigwedge_P i(L_{x}) \\
        &=\textstyle i\left(\bigvee_P L_x \right) \\
        &= i(r(x)) \text{.}
\end{align*}
The case $x_m=3$ and $(i(x))_m=2$ reduces to the previous case, which concludes the proof of the claim.

\medskip

{\it  Claim 2:}  $r$ is monotone.

Let $x,y\in D^{n}$ such that $x<y$. Then $L_x\subseteq L_y$, $U_y\subseteq U_x$.
Therefore,
\begin{equation}\label{Eq:PropLxLyI}
\textstyle\bigvee_{P} L_x\leq \bigvee_{P} L_y\leq \bigwedge_{P} U_y\text{,}
\end{equation}
and
\begin{equation}\label{Eq:PropLxLyII}
 \textstyle\bigwedge_{P} U_x\leq \bigwedge_{P} U_y\text{.}
\end{equation}
If $x\in Z$, observe that $y\in\{0,1,3\}^{n}\setminus\{0,1\}^{n}$. By \eqref{eq:mstretr1}, \eqref{eq:mstretr2}  and \eqref{Eq:PropLxLyII}, $r(x)\leq \bigwedge_{P} U_x\leq \bigwedge_{P} U_y=r(y)$. A similar argument proves that $r(x)\leq r(y)$ if $y\in Z$.

If $x,y \in D^n \setminus Z$. If $r(x)=\bigvee_{P} L_x$, then $r(x) \leq r(y)$ by \eqref{Eq:PropLxLyI} and \eqref{eq:mstretr2}.
If $r(x)=\bigwedge_{P} U_x$, then by \eqref{eq:mstretr2},
letting $m$ be the smallest number in $\{1,\ldots,n\}$ such that $x_m=3$ since $x\leq y$
it follows that $y_k\in\{0,1,3\}$ for every $k<m$, and $y_m=3$.
Again by \eqref{eq:mstretr2}, $r(y)=\bigwedge_{P} U_y$. Therefore, $r(x)\leq r(y)$ by \eqref{Eq:PropLxLyII}, which concludes the proof of the claim.

\medskip

By Claim 1 and Claim 2, $r \colon \mathbf{D}^{n} \to \mathbf{P}$ is the required retraction, 
so that $\mathbf{A}$ is a retract of $\mathbf{F}_{\M}(n)$ in $\FM$, that is, it is projective in $\FM$.
\end{proof}

Since $(P,\leq)$ is a finite lattice by $(M_1)$, condition $(M_3)$ reduces to the following first-order statement:
$x \vee y \vee z\leq i(x \vee y \vee z)$, for all $x,y,z$ such that $x \vee y\leq i(x\vee y)$, $x \vee z\leq i(x\vee z)$, and $y \vee z \leq i(y\vee z)$.

\begin{theorem}[Kleene Projective]\label{th:kmain}
Let $\mathbf{A} \in \FK$.  Then $\mathbf{A}$ is projective in $\K$
iff $\JK(\mathbf{A})=(P,\leq,i) \in \FPK$ satisfies conditions $(M_2)$, $(M_3)$ in Theorem~\ref{th:mmain} and the conditions:
\begin{enumerate}
\item[$(K_1)$] $\{x\in P\mid x\leq i(x)\}$ with inherited order is a nonempty meet semilattice;
\item[$(K_2)$] every $x,y \in P$ such that $x ,y \leq i(y),i(x)$ have a common upper bound $z \in P$ such that $z \leq i(z)$.
\end{enumerate}
\end{theorem}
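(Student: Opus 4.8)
The plan is to follow the template of the proof of Theorem~\ref{th:mmain}, replacing the free object $\D^n$ by its largest Kleene subobject. By Theorem~\ref{th:projcharOp}, Theorem~\ref{th:dualities} and Proposition~\ref{prop:freek}, for a suitable $n$ we may display $\mathbf{P}=\JK(\mathbf{A})$ as a subobject of $\JK(\mathbf{F}_{\K}(n))=(\D^n)_k$, and $\mathbf{A}$ is projective in $\K$ iff the inclusion $\mathbf{P}\hookrightarrow(\D^n)_k$ admits a retraction $r$ in $\FPK$, that is, a monotone $i$-equivariant map with $r|_P=\mathrm{id}_{\mathbf{P}}$. The decisive geometric fact is that, since $i$ fixes $0,1$ and swaps $2,3$ in $\D$, the carrier of $(\D^n)_k$ is the union of the two cubes $C_-=\{0,1,2\}^n=(Z]$ and $C_+=\{0,1,3\}^n=[Z)$, glued along the antichain $Z=\{0,1\}^n$ of fixed points of $i$; moreover $i(C_-)=C_+$, the set $\{x\in P\mid x\le i(x)\}$ equals $S:=P\cap C_-$, and $i(S)=P\cap C_+$. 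Throughout I write $Y=Z\cap P$, $L_x=\{z\in P\mid z\le x\}$ and $U_x=\{z\in P\mid x\le z\}$.

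For the forward implication I would first recover $(M_2)$ and $(M_3)$ exactly as in Theorem~\ref{th:mmain}: the auxiliary points used there lie in $Z\subseteq(\D^n)_k$, and $(Z]=C_-$ is unchanged and $3$-complete, so $r$ restricts to a poset retraction of $C_-$ onto $(Y]_P=S$, whence $S$ is $3$-complete by \cite[Corollary~2.6]{BB89}. For $(K_1)$ I would use that $C_-$ is a meet semilattice and $r|_{C_-}$ is a monotone idempotent onto $S$: then $a\wedge_S b=r(a\wedge_{C_-}b)$ realizes binary meets, and $S$ is nonempty since $r((2,\dots,2))\in S$. For $(K_2)$, given $x,y\in P$ with $x,y\le i(y),i(x)$, the hypothesis $x\le i(y)$ rules out any coordinate with $\{x_j,y_j\}=\{0,1\}$, so the coordinatewise join $w=x\vee y$ computed in $\D^n$ avoids the deleted mixed points and lies in $C_-\subseteq(\D^n)_k$; then $z=r(w)\in S$ is the required common upper bound.

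For the converse I would build $r$ from $S$. The key existence step is that for every $x\in C_-$ the join $\bigvee_S L_x$ exists: any $z_1,z_2\in L_x$ satisfy $z_1\le x\le i(x)\le i(z_2)$, so by $(K_2)$ they have an upper bound in $S$, and $(M_3)$ then upgrades these pairwise bounds to the full join by $3$-completeness. I would set $r(x)=\bigvee_S L_x$ for $x\in C_-\setminus Z$, dually $r(x)=i\bigl(\bigvee_S L_{i(x)}\bigr)$ for $x\in C_+\setminus Z$, and for $x\in Z$ (where $i(x)=x$, so the two formulas read $\bigvee_S L_x$ and its image $i(\bigvee_S L_x)$) I would fix $r(x)\in Y$ with $\bigvee_S L_x\le r(x)\le i(\bigvee_S L_x)$; such a fixed point exists by $(M_2)$, since $\bigvee_S L_x\le i(\bigvee_S L_x)$ and any $y\in Y$ above $\bigvee_S L_x$ satisfies $y=i(y)\le i(\bigvee_S L_x)$. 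The retraction identity and $i$-equivariance (mirroring Claim~1 in the proof of Theorem~\ref{th:mmain}) are immediate from this symmetric definition.

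The main obstacle is monotonicity. Because $Z$ is an antichain and $[z)\subseteq C_+$, $(z]\subseteq C_-$ for $z\in Z$, a strict comparability $x<y$ with an endpoint in $Z$ keeps the other endpoint in the expected cube, and all same-cube cases reduce to monotonicity of $x\mapsto\bigvee_S L_x$. The one genuinely new case is $x\in C_-\setminus Z$ below $y\in C_+\setminus Z$, where I must show $\bigvee_S L_x\le i(\bigvee_S L_{i(y)})$. Here I would again exploit $(K_2)$ and $(M_3)$: for any $z\in L_x$ and $w\in L_{i(y)}$ one has $z\le x\le y\le i(w)$, so $L_x\cup L_{i(y)}$ has pairwise upper bounds in $S$ and hence a join $s=\bigvee_S(L_x\cup L_{i(y)})\in S$. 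Since $s\le i(s)$, chaining $\bigvee_S L_x\le s\le i(s)\le i(\bigvee_S L_{i(y)})=r(y)$ closes the argument, and $r$ is the required retraction.
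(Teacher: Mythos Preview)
Your proof is correct and follows essentially the same template as the paper: the same dual reformulation via Theorem~\ref{th:projcharOp}, Theorem~\ref{th:dualities} and Proposition~\ref{prop:freek}; the same identification of $(\D^n)_k$ as $C_-\cup C_+$ glued along the antichain $Z=\{0,1\}^n$; the same forward arguments for $(M_2)$, $(M_3)$, $(K_1)$, $(K_2)$; and the same retraction $r$ built from $\bigvee L_x$ on $C_-$, transported by $i$ to $C_+$, with a choice in $Y$ on $Z$.

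The one noteworthy divergence is in the cross-cube monotonicity case $x\in C_-\setminus Z$, $y\in C_+\setminus Z$. The paper observes that between any such $x<y$ there sits a fixed point $z\in Z$ with $x\le z\le y$ (a coordinate argument in $\{0,1,2,3\}^n$), and then simply chains $r(x)\le r(z)=i(r(z))\le r(y)$ using the already-settled same-cube cases. Your approach instead stays intrinsic to $\mathbf{P}$: you use $(K_2)$ and $(M_3)$ to produce $s=\bigvee_S(L_x\cup L_{i(y)})\in S$ and bridge via $s\le i(s)$. Both are valid; the paper's interpolation trick is shorter and reuses earlier cases, while yours has the merit of not appealing to the concrete coordinate structure of $(\D^n)_k$ at that step.
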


\begin{proof}
There exists $n \in \mathbb{N}$ such that $\JK(\mathbf{A})=(P,\leq,i)=\mathbf{P}$ 
is a subobject of $\JK(\mathbf{F}_{\K}(n))=(\mathbf{D}^n)_k=((D^n)_k,\leq,i) \in \FPK$ by Proposition~\ref{prop:freek}.
Combining Theorem~\ref{th:projcharOp} and Theorem \ref{th:dualities}, $\mathbf{A}$ is projective 
iff there exists a morphism $r\colon (\mathbf{D}^n)_k \rightarrow \mathbf{P}$ in $\FPK$ such that $r|_P=\mathrm{id}_{\mathbf{P}}$.  
Therefore, it is sufficient to show that conditions $(K_1)$, $(K_2)$, $(M_2)$, and $(M_3)$ are necessary and sufficient for the existence of such a map $r$.

Below, $Z=\{ z \in (D^n)_k \mid z=i(z) \}=\{0,1\}^n$ and $Y=Z \cap P$.
\medskip

$(\Rightarrow)$ Let $r\colon (\mathbf{D}^n)_k \rightarrow \mathbf{P}$ be a morphism in $\FPK$ such that $r|_P=\mathrm{id}_{\mathbf{P}}$.

The proof that $\mathbf{P}$ satisfies $(M_2)$ and $(M_3)$
follows by the same argument used in the proof of Theorem \ref{th:mmain}.

For $(K_1)$: First observe that $r(Z)=Y$ and $r((Z])=(Y]_P$.  Then the restriction of $r$ to $(Z]$ is a poset retraction of $(Z]$ onto $(Y]_P$.  
Since $(Z]$ is a nonempty meet semilattice, 
$(Y]_P$ is a nonempty meet semilattice \cite[Lemma~2.4]{BB89}.  

For $(K_2)$: Let $x,y \in P$, be such that $x,y \leq_P i(x),i(y)$. 
Then there does not exist $i \in \{1,\dots,n\}$ such that $x_i=0$ and $y_i=1$ 
(otherwise, $x \parallel_P i(y)$), which proves that $x \vee_{D^{n}} y \in \{2,0,1\}^n=(Z]\subseteq D^{n}_k$.  
Then $z=r(x \vee_{D^{n}} y) \in (Y]_P$.  
Therefore, $x,y \leq z\leq i(z)$, as desired.

\medskip

$(\Leftarrow)$ Let $\mathbf{P}$ be a subset of $(\mathbf{D}^n)_k$ with inherited order and involution 
satisfying $(K_1)$,  $(K_2)$, $(M_2)$ and $(M_3)$.  
We define a morphism $r\colon (\mathbf{D}^n)_k \rightarrow \mathbf{P}$ in $\FPK$ such that $r|_P=\mathrm{id}_{\mathbf{P}}$.  

Since $\mathbf{P}\in\FPK$, by $(M_2)$ we have that $P=(Z]\cup[Z)=(Z]\cup i((Z])$.  Moreover, since $Z$ is an antichain, $(Z]\cap[Z)=Z$.  
For all $x \in (Z]$, let $L_x=\{ z \in P \mid z \leq x \} $.  Observe that $L_x \subseteq (Y]_P$ by $(M_2)$.  
Also, if $v,w \in L_x$, then $v,w \leq x \leq i(v),i(w)$, 
and combining $(K_1)$ and $(K_2)$, we obtain that $v \vee_P w \in (Y]_P$ for all $v,w \in L_x$.  Therefore, $\bigvee_P L_x \in (Y]_P$ by $(M_3)$.

For all $x \in Z$, we define $r(x)\in Y$ such that,
\begin{equation}\label{eq:kstretr2}
\textstyle\bigvee_P L_x\leq r(x),
\end{equation}
whose existence is ensured by condition $(M_2)$.
And for all $x \in (Z] \setminus Z$, we define,
\begin{align}\label{eq:kstretr1}
r(x) &= \textstyle\bigvee_P L_x\text{,}\\
r(i(x)) &= i(r(x))\text{.}
\end{align}

The map $r\colon (D^{n})_k\rightarrow P$ is well defined.  We prove that $r$ is the desired retraction.  
Clearly, if $x \in P$, then $r(x)=x$.
By definition, $r$ commutes with $i$.
We check monotonicity.  Let $x,y \in (D^n)_k$
be such that $x < y$.  If $x,y \in (Z]$, then $L_x \subseteq L_y$, then
$$\textstyle r(x)=\bigvee_P L_x \leq \bigvee_P L_y \leq r(y)\text{,}$$
where the last inequality always holds by (\ref{eq:kstretr2}) and (\ref{eq:kstretr1}).
If $x \in (Z]$ and $y \in [Z)$, then there exists $z \in Z$ such that $x \leq z \leq y$,
so that $i(y) \leq z$.  Then $r(x),r(i(y)) \leq r(z)$ by the previous case,
but $r(i(y))=i(r(y))$ by commutativity of $r$,
so that $r(z)=r(i(z))=i(r(z)) \leq r(y)$ by the properties of $i$ and commutativity of $r$.
If $x,y \in [Z)$, then $i(x),i(y) \in (Z]$ and $i(y) \leq i(x)$,
then $r(i(y))\leq r(i(x))$, then $i(r(y))\leq i(r(x))$,
and so $r(x)\leq r(y)$.
\end{proof}

Observe that the conditions $(K_1)$, $(K_2)$ are first-order conditions on the set $\{x\in P\mid x\leq i(x)\}$.

\section{Classification of Unification Problems}\label{sect:unif}

We obtain a complete, decidable, first-order classification of unification problems over 
bounded distributive lattices (Theorem~\ref{Thm:UnifLat}), 
Kleene algebras (Theorem~\ref{Theo:UnifClassKleene}) 
and De Morgan (Theorem~\ref{Theo:UnifClassDeMorgan}) algebras with respect to unification type.  
In particular, we establish that unification over the varieties of De Morgan and Kleene algebras is nullary.  

For the sake of presentation, we introduce the following notion.  An \emph{alphabet} $\Sigma$ is a set of letters.  A \emph{word} over $\Sigma$ 
is a finite sequence of letters in $\Sigma$.  A \emph{formal language} over $\Sigma$ 
is a subset of words over $\Sigma$.

\subsection{Distributive Lattices}\label{subsect:unifbdl}

We classify all solvable instances of the unification problem 
over bounded distributive lattices with respect to their unification type (Theorem \ref{Thm:UnifLat}), 
thus tightening the nullarity result by Ghilardi in \cite{G97}.  
This case study prepares the technically more involved cases of Kleene and De Morgan algebras.  

In \cite{BH70b}, Balbes and Horn characterize projective bounded distributive lattices.  
In the finite case, the characterization states that a finite bounded distributive lattice $\mathbf{L} \in \FD$ is projective 
iff the finite poset $\JL(\mathbf{L}) \in \FP$ is a nonempty lattice.  
Thus, combining the algebraic unification theory developed by Ghilardi \cite{G97} 
and the finite duality by Birkhoff (Theorem~\ref{th:birkhoff}), 
a unification problem over bounded distributive lattices reduces to the following combinatorial question:

\begin{description}
\item[Problem]  $\textsc{Unif}(\BDL)$.
\item[Instance]  A finite poset $\mathbf{P}$.
\item[Solution]  A monotone map $u \colon \mathbf{L} \to \mathbf{P}$,
where $\mathbf{L}$ is a finite nonempty lattice.
\end{description} 

Let $\mathbf{P}=(P,\leq) \in \FP$, and for $i=1,2$ let $u_i \colon \mathbf{L}_i \to \mathbf{P}$ be  unifiers for $\mathbf{P}$.  
Then $u_1$ is more general than $u_2$, in symbols, $u_2 \leq u_1$, 
iff there exists a monotone map $f \colon \mathbf{L}_2 \to \mathbf{L}_1$ 
such that $u_1 \circ f=u_2$.  Let $U_{\BDL}(\mathbf{P})$ denote the preordered set of unifiers of $\mathbf{P}$.  
Then, the unification type of $\mathbf{P}$ is defined as usual, 
$\mathrm{type}_{\BDL}(\mathbf{P})=\mathrm{type}(U_{\BDL}(\mathbf{P}))$.  
By the duality in Theorem~\ref{th:dualities}, 
$U_{\BDL}(\mathbf{P})$ and $U_{\BDL}(D(\mathbf{P}))$ are equivalent (as categories).  
Then, $\mathrm{type}_{\BDL}(\mathbf{P})=\mathrm{type}_{\BDL}(\DL(\mathbf{P}))$.  

\begin{remark}
An instance $\mathbf{P}=(P,\leq)$ of $\textsc{Unif}(\BDL)$
is solvable iff $P \neq \emptyset$.
\end{remark}

We now embark in the proof of the main result of this section.  The structure of the 
proof is the following:  using a slight modification of \cite[Theorem~5.7]{G97}, 
we identify a sufficient condition for an instance of the unification problem 
to have nullary type (Lemma~\ref{lem:NullBDL}), and then we prove that the identified condition is indeed necessary for nullarity 
(Theorem~\ref{Thm:UnifLat}). 

\begin{lemma}\label{lem:NullBDL}
Let $\mathbf{Q}=(Q,\leq) \in \FP$ be an instance of $\textsc{Unif}(\BDL)$.
If there exist $x,a,b,c,d,y \in Q$ such that:
\begin{itemize}
\item[$(i)$] $x\leq a,b \leq c,d \leq y$;
\item[$(ii)$] there does not exist $e\in Q$ such that $a,b \leq e \leq c,d$;
\end{itemize}
then $\mathrm{type}_{\BDL}(\mathbf{Q})=0$ (see Figure~\ref{fig:bdlconf1}).
\begin{figure}
\centering
\begin{picture}(0,0)%
\includegraphics{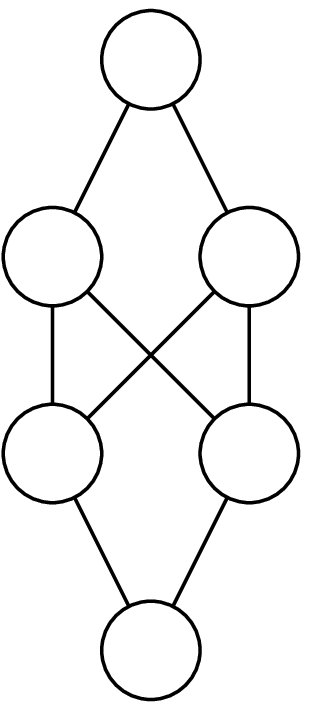}%
\end{picture}%
\setlength{\unitlength}{4144sp}%
\begingroup\makeatletter\ifx\SetFigFont\undefined%
\gdef\SetFigFont#1#2#3#4#5{%
  \reset@font\fontsize{#1}{#2pt}%
  \fontfamily{#3}\fontseries{#4}\fontshape{#5}%
  \selectfont}%
\fi\endgroup%
\begin{picture}(1380,3180)(6286,-5701)
\put(6974,-2806){\makebox(0,0)[b]{\smash{{\SetFigFont{9}{10.8}{\familydefault}{\mddefault}{\updefault}{\color[rgb]{0,0,0}$y$}%
}}}}
\put(6977,-5508){\makebox(0,0)[b]{\smash{{\SetFigFont{9}{10.8}{\familydefault}{\mddefault}{\updefault}{\color[rgb]{0,0,0}$x$}%
}}}}
\put(7430,-3714){\makebox(0,0)[b]{\smash{{\SetFigFont{9}{10.8}{\familydefault}{\mddefault}{\updefault}{\color[rgb]{0,0,0}$d$}%
}}}}
\put(6530,-3713){\makebox(0,0)[b]{\smash{{\SetFigFont{9}{10.8}{\familydefault}{\mddefault}{\updefault}{\color[rgb]{0,0,0}$c$}%
}}}}
\put(7434,-4613){\makebox(0,0)[b]{\smash{{\SetFigFont{9}{10.8}{\familydefault}{\mddefault}{\updefault}{\color[rgb]{0,0,0}$b$}%
}}}}
\put(6529,-4604){\makebox(0,0)[b]{\smash{{\SetFigFont{9}{10.8}{\familydefault}{\mddefault}{\updefault}{\color[rgb]{0,0,0}$a$}%
}}}}
\end{picture}%

\caption{Subposet of $\mathbf{Q}$ in Lemma~\ref{lem:NullBDL}.}\label{fig:bdlconf1}
\end{figure}
\end{lemma}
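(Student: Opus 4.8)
The plan is to adapt the nullarity argument of Ghilardi \cite[Theorem~5.7]{G97} to the abstract configuration $(i)$--$(ii)$. First note that $(ii)$ forces $a \parallel b$ and $c \parallel d$: if, say, $a \leq b$, then $e = b$ would satisfy $a,b \leq e \leq c,d$, and dually for $c,d$. Next record the basic \emph{obstruction} behind the whole argument: no unifier $u \colon \mathbf{L} \to \mathbf{Q}$ can admit $\alpha,\beta,\gamma,\delta \in L$ with $u(\alpha)=a$, $u(\beta)=b$, $u(\gamma)=c$, $u(\delta)=d$ and $\alpha,\beta \leq \gamma,\delta$; indeed, since $\mathbf{L}$ is a lattice, $\alpha\vee\beta \leq \gamma\wedge\delta$, so $u(\alpha\vee\beta)$ would be an element $e \in Q$ with $a,b \leq e \leq c,d$, contradicting $(ii)$. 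In particular no lattice carrying a ``waist'' $\alpha\vee\beta=\gamma\wedge\delta$ over the four points maps into $\mathbf{Q}$ realizing the configuration; any unifier touching all of $a,b,c,d$ must bridge the forbidden interval.

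The reduction is as follows. The instance is solvable since $Q \neq \emptyset$, so $U_{\BDL}(\mathbf{Q})$ is nonempty. A routine check on the definition of a $\mu$-set shows that every element of a $\mu$-set is maximal in the generality preorder (if $m \in M$ and $m \leq z$, dominance gives $m' \in M$ with $z \leq m'$, whence $m \leq m'$ forces $m \equiv m'$ and $z \equiv m$). Hence it suffices to exhibit a nonempty \emph{upset} $\mathcal{U}$ of $U_{\BDL}(\mathbf{Q})$ with no maximal element: such a $\mathcal{U}$ has $\mathrm{type}(\mathcal{U}) = 0$, and since $0$ is the top of the order $1 \leq \omega \leq \infty \leq 0$, Lemma~\ref{Lemma:UpsetNullary} yields $\mathrm{type}_{\BDL}(\mathbf{Q}) \geq \mathrm{type}(\mathcal{U}) = 0$, i.e. $\mathrm{type}_{\BDL}(\mathbf{Q}) = 0$.

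To build $\mathcal{U}$, I would construct an infinite strictly ascending chain of unifiers $u_1 < u_2 < \cdots$ and take $\mathcal{U}$ to be the upset it generates. The domains $\mathbf{L}_n$ are finite lattices that \emph{resolve the empty interval} of $(ii)$: in view of the obstruction above, $\mathbf{L}_n$ cannot contain a waist, so it bridges the gap between the $\{a,b\}$-level and the $\{c,d\}$-level by an $n$-rung zigzag alternating between the $c$-side and the $d$-side, equipped with a bottom mapped to $x$ and a top mapped to $y$; the map $u_n$ collapses this fence onto $\{a,b,c,d\}$, realizing the configuration up to an $n$-step approximation. The inclusion of the shorter fence into the longer one provides a monotone $f_n \colon \mathbf{L}_{n-1} \to \mathbf{L}_n$ with $u_n \circ f_n = u_{n-1}$, so $u_{n-1} \leq u_n$. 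The essential point is then that \emph{no} unifier lying above the chain is maximal: given $v$ with $v \geq u_n$, the factorization $u_n = v \circ f$ together with the join/meet structure of the domain of $v$ produces a pair of elements whose images straddle the gap, and subdividing this straddle by one further rung yields $w > v$. Strictness is precisely where $(ii)$ is used a second time: a backward factorization witnessing $w \leq v$ would, by the obstruction, populate the interval $[a\vee b,\,c\wedge d]$ with some $e$ satisfying $a,b \leq e \leq c,d$, which is impossible. Thus $\mathcal{U}$ has no maximal element.

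The main obstacle is exactly this non-maximality step. One must argue, uniformly over \emph{all} $v \geq u_n$ rather than merely along the chain, that compatibly realizing $a,b,c,d$ under $(i)$ unavoidably forces a gap-straddling pair that can always be refined, and that the refinement cannot be undone without inhabiting the forbidden interval. Making the ``straddle'' notion precise for an arbitrary domain lattice, and checking that the one-rung subdivision genuinely increases generality (produces $f$ forward but blocks any $f$ backward), is the delicate combinatorial core; this is where the construction of \cite[Theorem~5.7]{G97} has to be reworked in terms of the abstract data $(i)$--$(ii)$ rather than merely invoked.
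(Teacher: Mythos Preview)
Your overall strategy --- pass to an upset of $U_{\BDL}(\mathbf{Q})$, show it has type $0$, and invoke Lemma~\ref{Lemma:UpsetNullary} --- matches the paper's. The gap is in how you rule out a $\mu$-set for your upset $\mathcal{U} = [u_1)$. You try to show directly that no $v \in \mathcal{U}$ is maximal by ``subdividing the straddle'' to manufacture some $w > v$, but you never specify $w$. Enlarging an arbitrary domain lattice $\mathbf{M}$ by ``one further rung'' need not produce a lattice, and even when it does, the backward non-factorization $w \not\leq v$ must be checked against \emph{every} monotone $g \colon \mathbf{M}' \to \mathbf{M}$, which requires control over the internal structure of $\mathbf{M}$ that you do not have. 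You flag this yourself as ``the main obstacle'' and leave it open; as written, the argument is incomplete at exactly the point that matters.

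The paper sidesteps the difficulty by a different reduction: it exploits \emph{directedness}. Taking $x$ minimal and $y$ maximal, the paper's upset is $V = \{u \mid x,y \in u(P)\}$, and $V$ is directed because any two unifiers in $V$ can be glued (disjoint union of domains plus a fresh bottom sent to $x$ and a fresh top sent to $y$; the result is again a lattice and dominates both). By Lemma~\ref{Lemma:DirectedType} this forces $\mathrm{type}(V) \in \{0,1\}$, so it suffices to rule out a single \emph{maximum} --- far weaker than ruling out every maximal. That is done by a size obstruction rather than a refinement: for each $n$ the paper builds $u_n \colon \mathbf{T}_n \to \mathbf{Q}$ in $V$ (with $\mathbf{T}_n$ containing atoms $1,\dots,n$) and shows that $u_n \leq v$ forces the atoms to have pairwise distinct images in the domain of $v$, since identifying two of the same parity would, via the join in $\mathrm{dom}(v)$ and precisely your first-paragraph obstruction, produce an $e$ with $a,b \leq e \leq c,d$. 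Hence a maximum of $V$ would have domain of size $\geq n$ for all $n$, which is impossible. The missing idea in your sketch is exactly this directedness step; once you have it, no refinement of an arbitrary $v$ is needed at all.
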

\begin{proof}
Since $\mathbf{Q}$ is a finite poset, we assume without loss of generality $x \in \mathrm{min}(\mathbf{Q})$ 
and $y \in \mathrm{max}(\mathbf{Q})$.  By $(ii)$, we have $a\neq b$ and $c\neq d$.  Let,
$$V=\{ u \colon \mathbf{P} \rightarrow \mathbf{Q} \in U_{\BDL}(\mathbf{Q}) \mid x,y \in u(P)\}\text{.}$$

Since $V$ is an upset of $U_{\BDL}(\mathbf{Q})$, by Lemma \ref{Lemma:UpsetNullary}, it is enough to prove that ${\rm type}(V)=0$ 
to conclude that ${\rm type}(U_{\BDL}(\mathbf{Q}))={\rm type}_{\BDL}(\mathbf{Q})=0$. We first observe that $V$ is directed.
Indeed, if $u_1 \colon \mathbf{R}_1 \rightarrow \mathbf{Q}$ and $u_2 \colon \mathbf{R}_2 \rightarrow \mathbf{Q}$ in $V$,
 define $\mathbf{P}=(P,\leq)$
by adjoining a fresh bottom $\bot$ and a fresh top $\top$
to the disjoint union of $\mathbf{R}_1$ and $\mathbf{R}_2$.
It is easy to check that $\mathbf{P}$ is a lattice.
Define $u(y)=u_j(y)$ iff $y \in R_j$ for $j=1,2$,
$u(\bot)=x$ and $u(\top)=y$.  Since $x$ and $y$ are minimal and maximal in $\mathbf{Q}$ respectively, $u$ is a monotone map from $\mathbf{P}$ into $\mathbf{Q}$ and $u \in V$.
For $j=1,2$, let $f_j \colon \mathbf{R}_j \to \mathbf{P}$ in $\FP$
be the injection of $\mathbf{R}_j$ into $\mathbf{P}$.
Then $u_j=u \circ f_j$ for $j=1,2$, which proves that $V$ is directed.

Since $V$ is a directed preordered set with the inherited order of $ U_{\BDL}(\mathbf{Q})$, by Lemma \ref{Lemma:DirectedType}, ${\rm type}(V)\in\{0,1\}$.
We show that $\mathrm{type}(V) \neq 1$.
For every $n \in \mathbb{N}$, we define a unifier $u_n \colon \mathbf{T}_n \to \mathbf{Q}$ in $V$ as follows.
For $\mathbf{T}_n=(T_n,\leq) \in \FP$ we let
$$T_n=\{\bot,\top,1,\dots,n,j\cdot k \mid \text{$j<k$ in $\{1,\dots,n\}$ and $j+k$ is odd} \}\text{;}$$
here, $T_n$ is a formal language over the alphabet $\{\bot,\top,\cdot,1,\dots,n\}$.
The partial order over $T_n$ is defined by the following cover relation, where $j,k \in \{1,\dots,n\}$:
\begin{itemize}
\item[]  $\bot \prec j$;
\item[] $j,k \prec j\cdot k$ for all $j\cdot k \in T_n$; 
\item[] $j\cdot k \prec \top$ for all $j\cdot k \in T_n$;
\end{itemize} 
where $j,k \in \{1,\dots,n\}$.

Then $\mathbf{T}_n$ is a lattice.
See Figure~\ref{fig:posdl} for the Hasse diagram of $\mathbf{T}_5$.
\begin{figure}
\centering
\begin{picture}(0,0)%
\includegraphics{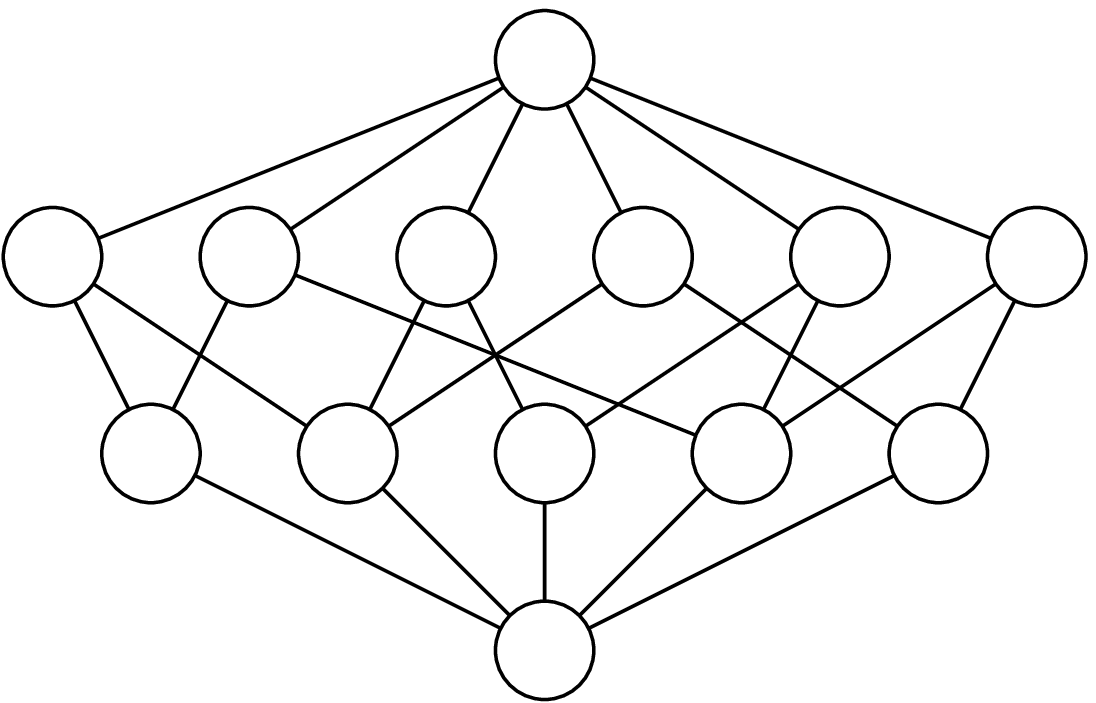}%
\end{picture}%
\setlength{\unitlength}{4144sp}%
\begingroup\makeatletter\ifx\SetFigFont\undefined%
\gdef\SetFigFont#1#2#3#4#5{%
  \reset@font\fontsize{#1}{#2pt}%
  \fontfamily{#3}\fontseries{#4}\fontshape{#5}%
  \selectfont}%
\fi\endgroup%
\begin{picture}(4980,3180)(4486,-5701)
\put(4726,-3706){\makebox(0,0)[b]{\smash{{\SetFigFont{9}{10.8}{\familydefault}{\mddefault}{\updefault}{\color[rgb]{0,0,0}$1 \cdot 2$}%
}}}}
\put(5626,-3706){\makebox(0,0)[b]{\smash{{\SetFigFont{9}{10.8}{\familydefault}{\mddefault}{\updefault}{\color[rgb]{0,0,0}$1 \cdot 4$}%
}}}}
\put(6526,-3706){\makebox(0,0)[b]{\smash{{\SetFigFont{9}{10.8}{\familydefault}{\mddefault}{\updefault}{\color[rgb]{0,0,0}$2 \cdot 3$}%
}}}}
\put(7426,-3706){\makebox(0,0)[b]{\smash{{\SetFigFont{9}{10.8}{\familydefault}{\mddefault}{\updefault}{\color[rgb]{0,0,0}$2 \cdot 5$}%
}}}}
\put(8326,-3706){\makebox(0,0)[b]{\smash{{\SetFigFont{9}{10.8}{\familydefault}{\mddefault}{\updefault}{\color[rgb]{0,0,0}$3 \cdot 4$}%
}}}}
\put(9226,-3706){\makebox(0,0)[b]{\smash{{\SetFigFont{9}{10.8}{\familydefault}{\mddefault}{\updefault}{\color[rgb]{0,0,0}$4 \cdot 5$}%
}}}}
\put(6976,-2806){\makebox(0,0)[b]{\smash{{\SetFigFont{9}{10.8}{\familydefault}{\mddefault}{\updefault}{\color[rgb]{0,0,0}$\top$}%
}}}}
\put(6976,-5506){\makebox(0,0)[b]{\smash{{\SetFigFont{9}{10.8}{\familydefault}{\mddefault}{\updefault}{\color[rgb]{0,0,0}$\bot$}%
}}}}
\put(5176,-4606){\makebox(0,0)[b]{\smash{{\SetFigFont{9}{10.8}{\familydefault}{\mddefault}{\updefault}{\color[rgb]{0,0,0}$1$}%
}}}}
\put(6076,-4606){\makebox(0,0)[b]{\smash{{\SetFigFont{9}{10.8}{\familydefault}{\mddefault}{\updefault}{\color[rgb]{0,0,0}$2$}%
}}}}
\put(6976,-4606){\makebox(0,0)[b]{\smash{{\SetFigFont{9}{10.8}{\familydefault}{\mddefault}{\updefault}{\color[rgb]{0,0,0}$3$}%
}}}}
\put(7876,-4606){\makebox(0,0)[b]{\smash{{\SetFigFont{9}{10.8}{\familydefault}{\mddefault}{\updefault}{\color[rgb]{0,0,0}$4$}%
}}}}
\put(8776,-4606){\makebox(0,0)[b]{\smash{{\SetFigFont{9}{10.8}{\familydefault}{\mddefault}{\updefault}{\color[rgb]{0,0,0}$5$}%
}}}}
\end{picture}%

\caption{$\mathbf{T}_5$ in Lemma~\ref{lem:NullBDL}.}\label{fig:posdl}
\end{figure}

We define $u_n \colon \mathbf{T}_n \to \mathbf{Q}$ as follows, where $j,k \in \{1,\dots,n\}$: 
\begin{enumerate}
\item[] $u_n(\bot)=x$ and $u_n(\top)=y$;
\item[] $u_n(j)=a$ and $u_n(j\cdot k)=c$, for all $j,j\cdot k \in T_n$ with $j$ odd;
\item[] $u_n(j)=b$ and $u_n(j\cdot k)=d$, for all $j, j\cdot k \in T_n$ with $j$ even. 
\end{enumerate}

Since for each $n\in\{1,2,\ldots\}$, $u_n \colon \mathbf{T}_n \to \mathbf{Q}$ is a monotone map, $\mathbf{T}_n$ is a lattice and $x,y\in u_n(\mathbf{T}_n)$, then $u_n$ is a unifier for $\mathbf{Q}$ in $V$.

Let $u \colon \mathbf{P} \to \mathbf{Q}$ be a unifier in $V$. We show that $u_n \leq u$ implies $|P| \geq n$.
Let $u_n=u \circ f$.  We claim that $f(j) \neq f(k)$ for all $j<k$ with $j,k \in \{1,\dots,n\}$.  
The claim is clear of $j$ and $k$ have different parity.  If $j$ and $k$ have the same parity, without loss of generality assume $j$ and $k$ are both odd, 
then let $l$ be even such that $j<l<k$.
By construction $j,l \leq j\cdot l$, then 
we have $f(j),f(l) \leq f(j\cdot l)$.  Since $\mathbf{P}$ is a lattice,
$$f(j),f(l) \leq f(j) \vee f(l) \leq f(j\cdot l)\text{.}$$
Similarly,
$$f(l),f(k) \leq f(l) \vee f(k) \leq f(l\cdot k)\text{.}$$
Assume for a contradiction that $f(j)=f(k)$.  Then,
$$f(j)=f(k),f(l) \leq f(j) \vee f(l)=f(l) \vee f(k) \leq f(j\cdot l),f(l\cdot k)\text{,}$$
and applying $u$ through, since $u_n=u \circ f$,
$$a,b \leq u(f(l) \vee f(k)) \leq c,d\text{,}$$
contradicting $(ii)$.  Therefore, a most general unifier $u \colon \mathbf{P} \to \mathbf{Q}$ 
has $|P| \geq n$ for every $n \in \mathbb{N}$, impossible because $\mathbf{P}$ is finite.  Thus, $\mathrm{type}(V)\neq 1$.  

Then ${\rm type}(V)=0$ and by Lemma~\ref{Lemma:UpsetNullary}, $\mathrm{type}_{\BDL}(\mathbf{Q})=0$, as desired.
\end{proof}

\begin{theorem}\label{Thm:UnifLat}
Let $\mathbf{P}=(P,\leq) \in \FP$ be a solvable instance of $\textsc{Unif}(\BDL)$.  Then:
\begin{align*}
\mathrm{type}_{\BDL}(\mathbf{P}) &=
\begin{cases}
1\text{,} & \text{iff $\mathbf{P}$ is a lattice;}\\
\omega\text{,} & \text{iff $\mathbf{P}$ is not a lattice,}\\
               & \text{but $[x,y]$ is a lattice for all $x \leq y$ in $\mathbf{P}$;}\\
0\text{,} & \text{otherwise.}
\end{cases}
\end{align*}
\end{theorem}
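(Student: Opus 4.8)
The plan is to split the analysis into the three mutually exclusive and exhaustive cases appearing in the statement and to pin down the type in each; since the values $1,\omega,0$ are distinct, establishing the correct type in every case yields all the ``iff'' claims simultaneously. The crucial observation underlying everything is a \emph{factorization through intervals}: if $u\colon\mathbf{L}\to\mathbf{P}$ is any unifier, with $\mathbf{L}$ a finite nonempty lattice with bottom $0_{\mathbf{L}}$ and top $1_{\mathbf{L}}$, then monotonicity forces $u(0_{\mathbf{L}})\leq u(\ell)\leq u(1_{\mathbf{L}})$ for every $\ell\in L$, so the image of $u$ lies in the interval $[x,y]$ with $x=u(0_{\mathbf{L}})$ and $y=u(1_{\mathbf{L}})$. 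Whenever this interval is a (nonempty) lattice, the inclusion $u_{[x,y]}\colon[x,y]\hookrightarrow\mathbf{P}$ is itself a unifier and $u=u_{[x,y]}\circ u'$, with $u'$ the corestriction of $u$; hence $u\leq u_{[x,y]}$. A direct check, using that $u_{[x',y']}$ is injective, shows moreover that for interval inclusions $u_{[x,y]}\leq u_{[x',y']}$ holds iff $[x,y]\subseteq[x',y']$, i.e. iff $x'\leq x$ and $y\leq y'$.

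I would first dispatch the lattice case: if $\mathbf{P}$ is a lattice then $\mathrm{id}_{\mathbf{P}}$ is a unifier and $u=\mathrm{id}_{\mathbf{P}}\circ u$ gives $u\leq\mathrm{id}_{\mathbf{P}}$ for every unifier $u$, so $\{\mathrm{id}_{\mathbf{P}}\}$ is a $\mu$-set and $\mathrm{type}_{\BDL}(\mathbf{P})=1$. For the finitary case, assume $\mathbf{P}$ is not a lattice but every interval $[x,y]$ is a lattice, and set
$$M=\{\,u_{[x,y]}\mid x\in\mathrm{min}(\mathbf{P}),\ y\in\mathrm{max}(\mathbf{P}),\ x\leq y\,\}.$$
By the factorization observation every unifier is below some $u_{[x,y]}$, and extending $x$ down to a minimal element and $y$ up to a maximal element shows every such $u_{[x,y]}$ lies below a member of $M$; thus $M$ is cofinal. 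Distinct members of $M$ correspond to distinct maximal intervals, neither containing the other, hence to incomparable unifiers, so $M$ is a finite $\mu$-set. Finally $|M|\geq 2$: were $M$ a singleton, $\mathbf{P}$ would have a unique minimal and a unique maximal element, that is a bottom $\bot$ and top $\top$, whence $\mathbf{P}=[\bot,\top]$ would be a lattice by hypothesis, contradicting the case assumption. Therefore $\mathrm{type}_{\BDL}(\mathbf{P})=\omega$.

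For the nullary case it remains to treat the situation where some interval $[x,y]$ fails to be a lattice, and here I would reduce to Lemma~\ref{lem:NullBDL}. Since $[x,y]$ is a finite poset with least element $x$ and greatest element $y$, it is a lattice iff every pair of its elements has a join in $[x,y]$; so failure yields $a,b\in[x,y]$ whose upper bounds in $[x,y]$ have no minimum, hence at least two distinct minimal upper bounds $c\neq d$. Then $x\leq a,b\leq c,d\leq y$, and any $e\in P$ with $a,b\leq e\leq c,d$ would automatically satisfy $e\in[x,y]$ and, being an upper bound of $a,b$ below each of the minimal upper bounds $c,d$, would equal both, forcing $c=d$, a contradiction. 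Thus the configuration of Lemma~\ref{lem:NullBDL} occurs and $\mathrm{type}_{\BDL}(\mathbf{P})=0$.

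The main obstacle is the finitary case: the content lies in recognizing that the interval factorization pins every unifier below a canonical interval inclusion, that the maximal such inclusions form an antichain, and that this antichain is a singleton precisely when $\mathbf{P}$ is already a lattice. The remaining bookkeeping—checking the factorization, the inclusion characterization of generality, and the translation between ``some interval is not a lattice'' and the hypothesis of Lemma~\ref{lem:NullBDL}—is routine.
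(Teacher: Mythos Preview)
Your proof is correct and follows essentially the same approach as the paper: the identity unifier in the lattice case, the $\mu$-set of interval inclusions $u_{[x,y]}$ with $x$ minimal and $y$ maximal in the finitary case, and the reduction to Lemma~\ref{lem:NullBDL} in the nullary case. The only minor variation is your argument that $|M|\geq 2$ (observing that $|M|=1$ would force $\mathbf{P}$ to be bounded and hence a lattice), whereas the paper instead exhibits two one-point unifiers to distinct minimal elements and shows directly that they have no common upper bound.
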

\begin{proof}
If $\mathbf{P}$ is a lattice, then $\mathrm{type}_{\BDL}(\mathbf{P})=1$ because 
$\mathrm{id}_{\mathbf{P}}$ is a most general unifier for $\mathbf{P}$.

Suppose that $\mathbf{P}$ is not a lattice and $[x,y]$ is a lattice for all $x \leq y$ in $\mathbf{P}$.  Define,
for every $x,y \in P$ such that $x \leq y$, $x\in{\rm min}(\mathbf{P})$, and $y\in{\rm max}(\mathbf{P})$, 
the (inclusion) unifier $u_{x,y} \colon [x,y] \to \mathbf{P}$ by $u_{x,y}(z)=z$ for all $z \in [x,y]$.  
Clearly, there are finitely many unifiers of the form $u_{x,y}$ with $x \leq y$ in $\mathbf{P}$,
because $P$ is finite. 
We claim that they form
a $\mu$-set in $U_{\BDL}(\mathbf{P})$. Since $x\in{\rm min}(\mathbf{P})$, and $y\in{\rm max}(\mathbf{P})$, 
any two unifiers of the type $u_{x,y}$ and $u_{x',y'}$ are comparable iff $x=x'$ and $y=y'$.  
Now let $u \colon \mathbf{L} \to \mathbf{P}$ be a unifier for $\mathbf{P}$.
Now, $\mathbf{L}$ is bounded, with bottom $\bot$ and top $\top$. Let $x\in{\rm min}(\mathbf{P})$ and $y\in{\rm max}(\mathbf{P})$ be such that 
$x\leq u(\bot)\leq u(\top)\leq y$. Then $u(L) \subseteq [x,y]$ and $u_{x,y} \circ u=u$, so that $u_{x,y}$ is more general than
$u$.  Thus $\mathrm{type}_{\BDL}(\mathbf{P}) \in \{1,\omega\}$.

Since $\mathbf{P}$ is not a lattice but for each $x\leq y$ in $\mathbf{P}$, $[x,y]$ is a lattice, then $\mathbf{P}$ cannot be bounded. 
Assume that $x_1 \neq x_2$ are minimal points in $\mathbf{P}$
(the argument is similar for maximal points).  Then let $\mathbf{L}=(\{p\},\leq)$,
for $i=1,2$ let $u_i \colon \mathbf{L} \to \mathbf{P}$ be the unifier such that $u_i(p)=x_i$.  
Suppose for a contradiction that there exists a unifier $u \colon \mathbf{M} \to \mathbf{P}$ such that $u_1,u_2\leq u$.  
For $i=1,2$, let $u \circ f_i=u_i$ be a factorization of $u_i$
where $u \colon \mathbf{M} \to \mathbf{P}$.  Then by monotonicity 
$u(f_1(p) \wedge f_2(p)) \leq u(f_1(p)),u(f_2(p))$, 
and since $x_1 \parallel x_2$, we have $u(f_1(p) \wedge f_2(p))<x_1, x_2$ 
which contradicts the minimality of $x_1$ and $x_2$.  
Thus $u_1$ and $u_2$ have no common upper bound in $U_{\BDL}(\mathbf{P})$, and $\mathrm{type}_{\BDL}(\mathbf{P}) \neq 1$.
This concludes the proof that $\mathrm{type}_{\BDL}(\mathbf{P})=\omega$.

Finally, let $x \leq y$ in $P$ be such that $[x,y]$ is not a lattice.  Then
there exist $a,b,c,d \in P$ such that $x \leq a,b \leq c,d \leq y$
and there does not exist $e \in P$ such that $a,b \leq e \leq c,d$.
By Lemma \ref{lem:NullBDL}, it follows that $\mathrm{type}_{\BDL}(\mathbf{P})=0$.
\end{proof}

\subsection{Kleene Algebras}\label{subsect:unifk}

We provide a complete classification of solvable instances of the unification problem 
over Kleene algebras (Theorem \ref{Theo:UnifClassKleene}). 
Combining the algebraic unification theory by Ghilardi \cite{G97}, 
Theorem~\ref{th:kmain}, and Theorem~\ref{th:dualities}, 
the unification problem over Kleene algebras reduces to the following combinatorial question:

\begin{description}
\item[Problem]  $\textsc{Unif}(\K)$.
\item[Instance]  $\mathbf{Q}=(Q,\leq,i) \in \FPK$.
\item[Solution]  A morphism $u \colon \mathbf{P} \to \mathbf{Q}$ in $\FPK$,
where $\mathbf{P}$ satisfies $(K_1)$, $(K_2)$, $(M_2)$, and $(M_3)$.
\end{description}

\begin{remark}\label{Rem:KleeneSolvable}
An instance $\mathbf{Q}=(Q,\leq,i)$ of $\textsc{Unif}(\K)$ is solvable iff
 $\{ x \in Q \mid x=i(x) \}\neq \emptyset$.  Indeed, if $\mathbf{P} \in \FPK$ satisfies $(K_1)$ and $(M_2)$, 
and $\mathbf{Q}$ admits a morphism from $\mathbf{P}$, then $\mathbf{Q}$ is $\{ x \in Q \mid x=i(x) \}\neq \emptyset$.
Conversely, if $\mathbf{Q}$ is such that $\{ x \in Q \mid x=i(x) \}\neq \emptyset$, then $\mathbf{Q}$ admits a morphism from
$\mathbf{P}=(P,\leq,i)$ where $P=\{x\}$, and $i(x)=x$;
clearly, $\mathbf{P}$ satisfies $(K_1)$, $(K_2)$, $(M_2)$, and $(M_3)$.
\end{remark}

Given a solvable instance $\mathbf{Q}$ of $\textsc{Unif}(\K)$, we let $U_{\K}(\mathbf{Q})$ 
denote the preordered set of unifiers of $\mathbf{Q}$, which is defined as in Section~\ref{subsect:unifbdl}.  

We now embark in the proof of the main result of this section.  The structure of the 
proof is the following: we identify two sufficient conditions for an instance of the unification problem 
to have nullary type (Lemma~\ref{lemma:unifkcase1} and Lemma~\ref{lemma:unifkcase2}), 
and then we prove that the identified conditions are indeed necessary for nullarity (Theorem~\ref{Theo:UnifClassKleene}).

We first establish the following fact for later use.

\begin{lemma}\label{Lem:UpKleene}
Let $\mathbf{Q}=(Q,\leq,i) \in \FPK$ be an instance of $\textsc{Unif}(\K)$ and $x\in Q$ be a minimal element of $\mathbf{Q}$.
Then 
\begin{equation}\label{Eq:UpwardKleene}
  V=\{ u \colon \mathbf{P} \rightarrow \mathbf{Q} \in U_{\K}(\mathbf{Q}) \mid x \in u(P)\} 
\end{equation}
is a directed upset in $U_{\K}(\mathbf{Q})$.
\end{lemma}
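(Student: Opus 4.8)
The plan is to prove the two assertions separately, since the upset property is essentially immediate while only directedness requires real work. For the upset property, suppose $u\colon\mathbf{P}\to\mathbf{Q}$ lies in $V$ and $u\leq u'$ for some unifier $u'\colon\mathbf{P}'\to\mathbf{Q}$, witnessed by a morphism $f\colon\mathbf{P}\to\mathbf{P}'$ with $u'\circ f=u$. Then $x\in u(P)=u'(f(P))\subseteq u'(P')$, so $u'\in V$; no further argument is needed.

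To prove directedness I would first record two facts. Since $x$ is minimal and comparable to $i(x)$ in $\mathbf{Q}\in\FPK$, minimality forces $x\leq i(x)$ (if $i(x)\leq x$ then $i(x)=x$). Next, for any unifier $u_j\colon\mathbf{P}_j\to\mathbf{Q}$ in $V$ I claim $u_j(P_j)\subseteq[x,i(x)]$. Writing $L_j=\{p\in P_j\mid p\leq i(p)\}$, condition $(K_1)$ makes $L_j$ a finite nonempty meet semilattice, so it has a least element $\bot_j$; since every $p\in P_j$ satisfies $p\in L_j$ or $i(p)\in L_j$, this $\bot_j$ is the global bottom of $\mathbf{P}_j$ and $\top_j=i(\bot_j)$ is its global top. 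A short case analysis (using $x\leq i(x)$ and minimality: if a preimage $b$ of $x$ has $b\notin L_j$, then $i(b)\in L_j$ and $u_j(i(b))=i(x)\leq x$ forces $x=i(x)$) shows that $x$ has a preimage in $L_j$, whence $u_j(\bot_j)\leq x$ and minimality gives $u_j(\bot_j)=x$. Monotonicity then yields $x=u_j(\bot_j)\leq u_j(p)\leq u_j(\top_j)=i(x)$ for all $p\in P_j$.

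Now, given $u_1\colon\mathbf{P}_1\to\mathbf{Q}$ and $u_2\colon\mathbf{P}_2\to\mathbf{Q}$ in $V$, I would build $\mathbf{P}$ by taking the disjoint union of $\mathbf{P}_1$ and $\mathbf{P}_2$, adjoining a fresh bottom $\bot$ and a fresh top $\top$, and extending the involution by $i(\bot)=\top$; this $\mathbf{P}$ lies in $\FPK$. I define $u\colon\mathbf{P}\to\mathbf{Q}$ by $u|_{P_j}=u_j$, $u(\bot)=x$, $u(\top)=i(x)$, with the inclusions $f_j\colon\mathbf{P}_j\to\mathbf{P}$ as witnessing morphisms. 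Monotonicity of $u$ across the new comparabilities is exactly the containment $u_j(P_j)\subseteq[x,i(x)]$ from the previous step, and $u$ commutes with $i$ since $u(\top)=i(x)=i(u(\bot))$; thus $u$ is a morphism in $\FPK$ with $u\circ f_j=u_j$, so $u_1,u_2\leq u$, and $u\in V$ because $u(\bot)=x$.

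The main obstacle will be checking that $\mathbf{P}$ is a genuine unifier domain, i.e.\ that it satisfies $(K_1)$, $(K_2)$, $(M_2)$, $(M_3)$. The decisive observation is that in the lower half $L=\{\bot\}\cup L_1\cup L_2$ of $\mathbf{P}$, an element of $L_1\setminus\{\bot\}$ and an element of $L_2\setminus\{\bot\}$ have no common upper bound inside $L$, their only common upper bound in $\mathbf{P}$ being $\top\notin L$. This localizes every verification to a single summand or to $\bot$: meets in $L$ are either computed inside some $L_j$ or collapse to $\bot$, giving $(K_1)$; any $X\subseteq L$ all of whose pairs are bounded in $L$ must lie in a single $\{\bot\}\cup L_j$, reducing the $3$-completeness $(M_3)$ to that of $L_j$; the hypothesis of $(K_2)$ can never hold across the two summands (it would require $p\leq i(q)$ with $p\in L_1$ and $i(q)\in P_2$), so $(K_2)$ reduces to the corresponding condition for some $\mathbf{P}_j$ or is trivial when $\bot$ occurs; and $(M_2)$ holds because $(M_2)$ for $\mathbf{P}_j$ already supplies a fixed point above each element of $L_j$, while a fixed point of $\mathbf{P}_1$ (which exists since $L_1\neq\emptyset$) lies above $\bot$. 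Assembling these verifications establishes that $V$ is a directed upset.
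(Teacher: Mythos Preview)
Your proof is correct and follows the same construction as the paper: adjoin a fresh bottom $\bot$ and top $\top$ to the disjoint union of $\mathbf{P}_1$ and $\mathbf{P}_2$, set $i(\bot)=\top$, and map $\bot\mapsto x$, $\top\mapsto i(x)$. You supply considerably more detail than the paper does---in particular the monotonicity of $u$ via $u_j(P_j)\subseteq[x,i(x)]$ and the explicit verification of $(K_1)$, $(K_2)$, $(M_2)$, $(M_3)$ for $\mathbf{P}$---whereas the paper simply asserts these facts; your case analysis locating a preimage of $x$ inside $L_j$ is slightly redundant once you know $\bot_j$ is the global bottom (any preimage $b$ already gives $u_j(\bot_j)\leq u_j(b)=x$), but this does not affect correctness.
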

\begin{proof}
Clearly, $V$ is an upset in $U_{\K}(\mathbf{Q})$.  
If $V$ is empty, directedness is trivial.  Otherwise, 
there exists $y\in Q$ such that $x\leq y=i(y)$, which proves that $x\leq i(x)$. 
Let $u_1 \colon \mathbf{R}_1 \rightarrow \mathbf{Q}$ and $u_2 \colon \mathbf{R}_2 \rightarrow \mathbf{Q}$ in $V$, 
with $\mathbf{R}_j=(R_j,\leq_j,i_j)$ for $j=1,2$.  
Define $\mathbf{P}=(P,\leq,i)$ by adjoining a fresh bottom $\bot$ and a fresh top $\top$
to the disjoint union of $\mathbf{R}_1$ and $\mathbf{R}_2$,
and by letting $i(\bot)=\top$, $i(\top)=\bot$,
and $i(y)=i_j(y)$ iff $y \in R_j$ for $j=1,2$.
Since $\mathbf{R}_1$ and $\mathbf{R}_2$ satisfy $(K_1)$, $(K_2)$, $(M_2)$, and $(M_3)$, so does $\mathbf{P}$.
Let $u\colon P\to Q$ be the map defined by: $u(y)=u_j(y)$ iff $y \in R_j$ for $j=1,2$,
$u(\bot)=x$ and $u(\top)=i(x)$. It follows that $u \in V$.
For $j=1,2$, let $f_j \colon \mathbf{R}_j \to \mathbf{P}$ in $\FPK$
be the injection of $\mathbf{R}_j$ into $\mathbf{P}$.
Then $u=u_j \circ f_j$ for $j=1,2$, as desired.
\end{proof}

\begin{lemma}\label{lemma:unifkcase1}
Let $\mathbf{Q}=(Q,\leq,i) \in \FPK$ be an instance of $\textsc{Unif}(\K)$.
If there exist $x,a,b,c,d,y,z\in Q$ such that:
\begin{itemize}
\item[$(i)$] $x\leq a,b \leq c,d$;
\item[$(ii)$] $c\leq y=i(y)$; $d\leq z=i(z)$;
\item[$(iii)$] there does not exist $e\in Q$ such that $a,b \leq e \leq c,d$;
\end{itemize}
then $\mathrm{type}_{\K}(\mathbf{Q})=0$ (see Figure~\ref{fig:kconf1}).
\begin{figure}
\centering
\begin{picture}(0,0)%
\includegraphics{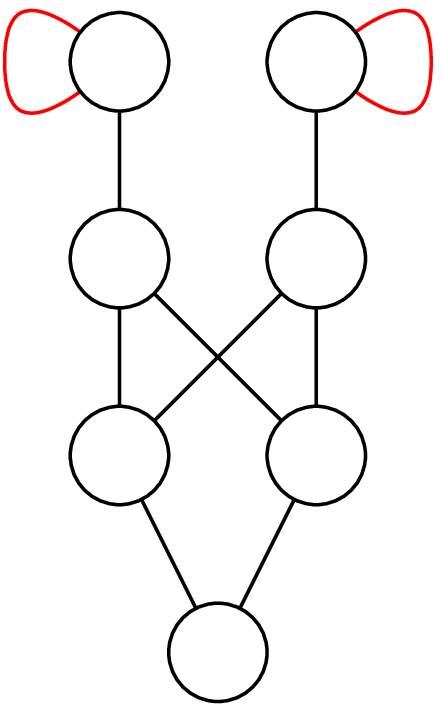}%
\end{picture}%
\setlength{\unitlength}{4144sp}%
\begingroup\makeatletter\ifx\SetFigFont\undefined%
\gdef\SetFigFont#1#2#3#4#5{%
  \reset@font\fontsize{#1}{#2pt}%
  \fontfamily{#3}\fontseries{#4}\fontshape{#5}%
  \selectfont}%
\fi\endgroup%
\begin{picture}(1994,3197)(5979,-5701)
\put(6977,-5508){\makebox(0,0)[b]{\smash{{\SetFigFont{9}{10.8}{\familydefault}{\mddefault}{\updefault}{\color[rgb]{0,0,0}$x$}%
}}}}
\put(7430,-3714){\makebox(0,0)[b]{\smash{{\SetFigFont{9}{10.8}{\familydefault}{\mddefault}{\updefault}{\color[rgb]{0,0,0}$d$}%
}}}}
\put(6530,-3713){\makebox(0,0)[b]{\smash{{\SetFigFont{9}{10.8}{\familydefault}{\mddefault}{\updefault}{\color[rgb]{0,0,0}$c$}%
}}}}
\put(7434,-4613){\makebox(0,0)[b]{\smash{{\SetFigFont{9}{10.8}{\familydefault}{\mddefault}{\updefault}{\color[rgb]{0,0,0}$b$}%
}}}}
\put(6529,-4604){\makebox(0,0)[b]{\smash{{\SetFigFont{9}{10.8}{\familydefault}{\mddefault}{\updefault}{\color[rgb]{0,0,0}$a$}%
}}}}
\put(7424,-2806){\makebox(0,0)[b]{\smash{{\SetFigFont{9}{10.8}{\familydefault}{\mddefault}{\updefault}{\color[rgb]{0,0,0}$z$}%
}}}}
\put(6524,-2806){\makebox(0,0)[b]{\smash{{\SetFigFont{9}{10.8}{\familydefault}{\mddefault}{\updefault}{\color[rgb]{0,0,0}$y$}%
}}}}
\end{picture}%

\caption{Subposet of $\mathbf{Q}$ in Lemma~\ref{lemma:unifkcase1}.}\label{fig:kconf1}
\end{figure}
\end{lemma}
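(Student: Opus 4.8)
The plan is to mimic the structure of the proof of Lemma~\ref{lem:NullBDL}, replacing the lattice gadget $\mathbf{T}_n$ by an involutive poset obtained by ``doubling'' a similar gadget through the involution, and using the fixpoints $y,z$ supplied by hypothesis $(ii)$ to furnish the tops of the domain. First I would reduce to the case $x\in\mathrm{min}(\mathbf{Q})$: replacing $x$ by a minimal element below it preserves $(i)$, while $(ii)$ and $(iii)$ do not mention $x$. Note also that $(iii)$ forces $a\neq b$ and $c\neq d$ (otherwise $a$, respectively $c$, would be an interpolant). I then set
$$V=\{u\colon\mathbf{P}\to\mathbf{Q}\in U_{\K}(\mathbf{Q})\mid x\in u(P)\}.$$
By Lemma~\ref{Lem:UpKleene}, $V$ is a directed upset of $U_{\K}(\mathbf{Q})$; hence by Lemma~\ref{Lemma:UpsetNullary} it suffices to prove $\mathrm{type}(V)=0$, and by Lemma~\ref{Lemma:DirectedType} this reduces to showing $\mathrm{type}(V)\neq1$, i.e.\ that the nonempty set $V$ has no maximum.

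To this end I would, for every $n$, construct a unifier $u_n\colon\mathbf{P}_n\to\mathbf{Q}$ in $V$ whose every upper bound in $V$ has a large domain. The domain $\mathbf{P}_n=(P_n,\leq,i)$ is built as follows. Its \emph{lower part} consists of a bottom $\bot$, atoms $1,\dots,n$, and joins $j\cdot k$ for $j<k$ with $j+k$ odd, ordered exactly as in $\mathbf{T}_n$; above \emph{each} join $j\cdot k$ I place a distinct fixpoint $f_{jk}=i(f_{jk})$; and the \emph{upper part} is the order-dual image of the lower part under $i$, so that $\mathbf{P}_n$ is self-dual with $\top=i(\bot)$ on top. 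The map $u_n$ sends $\bot\mapsto x$, an odd atom to $a$ and an even atom to $b$, a join with odd (respectively, even) smaller index to $c$ (respectively, $d$), and the fixpoint above a $c$-join (respectively, $d$-join) to $y$ (respectively, $z$); it is extended to the upper part by $u_n(i(p))=i(u_n(p))$. Hypotheses $(i)$ and $(ii)$ make $u_n$ monotone and commute with $i$ (note $y=i(y)$, $z=i(z)$), and $u_n(\bot)=x$ gives $u_n\in V$.

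The heart of the argument, and the main obstacle, is verifying that $\mathbf{P}_n$ is a legitimate domain, i.e.\ satisfies $(K_1)$, $(K_2)$, $(M_2)$, $(M_3)$ of Theorem~\ref{th:kmain}. Condition $(M_2)$ holds because for $n\geq2$ every lower element lies below some $f_{jk}$, and $(M_3)$ and $(K_1)$ hold precisely because the only upper bounds available in the lower part are the per-join fixpoints, so any pairwise-bounded family collapses to a single join or to a single fixpoint. For $(K_2)$ one checks that two lower elements $p,q$ with $p,q\leq i(p),i(q)$ are never ``conflicting''; concretely, two distinct atoms satisfy $p\leq i(q)$ only when they have opposite parity, in which case the join $p\cdot q$ is their common upper bound. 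The crucial design choice is to use one fixpoint per join rather than a single common top: this is exactly what keeps the lower part a meet-semilattice (as $(K_1)$ demands), and it also lets $u_n$ map the fixpoints over $c$-joins to $y$ and those over $d$-joins to $z$ independently, which a single top could not achieve when $y$ and $z$ lack a common fixpoint upper bound in $\mathbf{Q}$.

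Finally I would run the counting argument. Suppose $u_n\leq u$ for some $u\colon\mathbf{P}\to\mathbf{Q}$ in $V$, witnessed by $f\colon\mathbf{P}_n\to\mathbf{P}$ in $\FPK$ with $u\circ f=u_n$. If $j,k$ are distinct atoms of opposite parity then $u_n(j)\neq u_n(k)$ (as $a\neq b$), so $f(j)\neq f(k)$. If $j,k$ have the same parity, pick $l$ of opposite parity with $j<l<k$; then $f(j),f(l)\leq f(j\cdot l)$ and $f(l),f(k)\leq f(l\cdot k)$, and since $(K_1)$ makes the lower part of $\mathbf{P}$ a meet-semilattice, any two elements with a common upper bound have a join (the meet of their upper bounds), so $f(j)\vee f(l)$ and $f(l)\vee f(k)$ exist. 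Were $f(j)=f(k)$, these joins would coincide in an element $w$ with $w\leq f(j\cdot l),f(l\cdot k)$ and $f(j),f(l)\leq w$; applying $u$ would give $a,b\leq u(w)\leq c,d$, contradicting $(iii)$. Hence $f$ is injective on $\{1,\dots,n\}$ and $|P|\geq n$. As $\mathbf{P}$ is finite, no single $u$ can bound all the $u_n$, so $V$ has no maximum, $\mathrm{type}(V)=0$, and therefore $\mathrm{type}_{\K}(\mathbf{Q})=0$.
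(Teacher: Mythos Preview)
Your proposal is correct and follows essentially the same approach as the paper: reduce to $x\in\mathrm{min}(\mathbf{Q})$, work in the directed upset $V$ via Lemma~\ref{Lem:UpKleene}, build for each $n$ the ``doubled'' gadget with one fixpoint above each opposite-parity join (the paper calls these $j\diamond k$ rather than your $f_{jk}$), map it into $\mathbf{Q}$ using $x,a,b,c,d,y,z$ exactly as you describe, and run the same $(K_1)$-based counting argument to contradict $(iii)$. The construction, the verification of $(K_1)$, $(K_2)$, $(M_2)$, $(M_3)$, and the injectivity step are all as in the paper.
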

\begin{proof}
Since $\mathbf{Q}$ is a finite poset, we assume without loss of generality $x \in \mathrm{min}(\mathbf{Q})$.
By $(iii)$, we have $a\neq b$ and $c\neq d$.  Let,
$$V=\{ u \colon \mathbf{P} \rightarrow \mathbf{Q} \in U_{\K}(\mathbf{Q}) \mid x \in u(P)\}\text{.}$$

By Lemma \ref{Lem:UpKleene} $V$ is an directed upset of $U_{\K}(\mathbf{Q})$.
By Lemma \ref{Lemma:UpsetNullary},  to prove that ${\rm type}(U_{\K}(\mathbf{Q}))=0$ it is enough to prove that ${\rm type}(V)=0$.
Since $V$ is directed, by Lemma \ref{Lemma:DirectedType}, $\mathrm{type}(V)\in\{0,1\}$.
We show that $\mathrm{type}(V) \neq 1$.
For every $n \in \mathbb{N}$, we define a unifier $u_n \colon \mathbf{T}_n \to \mathbf{Q}$ in $V$ as follows.
For $\mathbf{T}_n=(T_n,\leq,i) \in \FPM$ we let
$$T_n=\{\bot,\overline{\bot},j,\overline{j},j\cdot k,\overline{j\cdot k},j\diamond k \mid \text{$j<k$ in $\{1,\dots,n\}$ and $j + k$ is odd} \}\text{;}$$
here, $T_n$ is a formal language over the alphabet $A \cup \{ \overline{s} \mid s \in A\}$,
with $A=\{\bot,\cdot ,\diamond,1,\dots,n\}$.
The map $i \colon T_n \to T_n$ is defined as follows, where $j,k \in \{1,\dots,n\}$ 
and $y \in \{\bot,j,j\cdot k \mid \text{$j<k$ in $\{1,\dots,n\}$ and $j + k$ is odd} \} \subseteq T_n$:

\begin{itemize}
\item[] $i(j\diamond k)=j\diamond k$ for all $j\diamond k \in T_n$;
\item[]$i(y)=\overline{y}$ and $i(\overline{y})=y$ for all $y,\overline{y} \in T_n$.
\end{itemize}

The partial order over $T_n$ is defined by the following cover relation, for all $j,k \in \{1,\dots,n\}$:
\begin{itemize}
\item[] $\bot \prec j$ and $i(j) \prec i(\bot)$, for all $j \in T_n$
\item[] $j,k \prec j\cdot k$ and $i(j\cdot k) \prec i(j),i(k)$, for all $j,k,j\cdot k \in T_n$.
\end{itemize}

It is easy to check that $\mathbf{T}_n$ satisfies $(K_1)$, $(K_2)$, $(M_2)$, and $(M_3)$.
Figure~\ref{fig:posk1} provides the Hasse diagram of $\mathbf{T}_4$.
\begin{figure}
\centering
\begin{picture}(0,0)%
\includegraphics{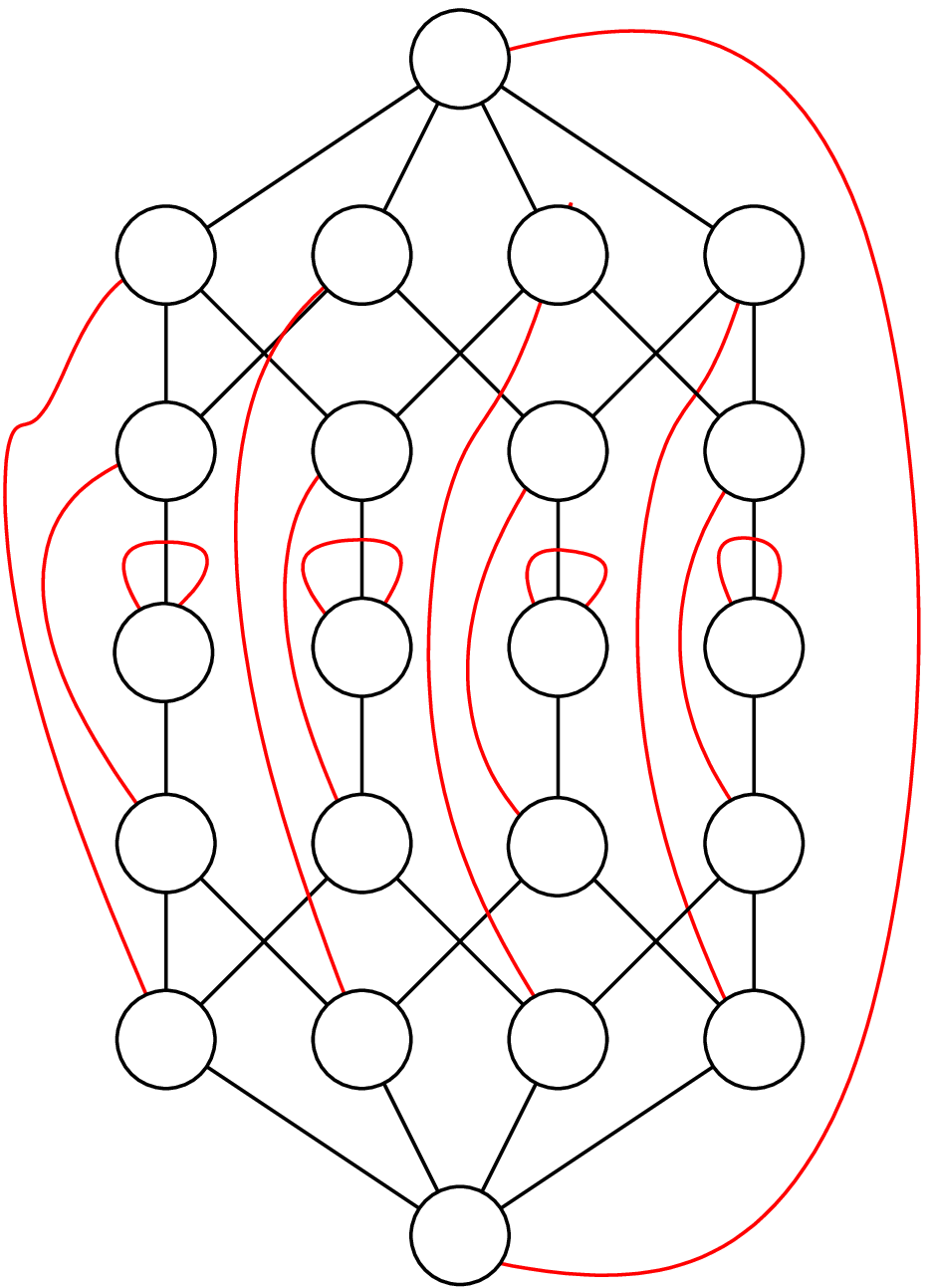}%
\end{picture}%
\setlength{\unitlength}{4144sp}%
\begingroup\makeatletter\ifx\SetFigFont\undefined%
\gdef\SetFigFont#1#2#3#4#5{%
  \reset@font\fontsize{#1}{#2pt}%
  \fontfamily{#3}\fontseries{#4}\fontshape{#5}%
  \selectfont}%
\fi\endgroup%
\begin{picture}(4241,5880)(4864,-5701)
\put(6977,-5508){\makebox(0,0)[b]{\smash{{\SetFigFont{9}{10.8}{\familydefault}{\mddefault}{\updefault}{\color[rgb]{0,0,0}$\bot$}%
}}}}
\put(5625,-4602){\makebox(0,0)[b]{\smash{{\SetFigFont{9}{10.8}{\familydefault}{\mddefault}{\updefault}{\color[rgb]{0,0,0}$1$}%
}}}}
\put(6529,-4604){\makebox(0,0)[b]{\smash{{\SetFigFont{9}{10.8}{\familydefault}{\mddefault}{\updefault}{\color[rgb]{0,0,0}$2$}%
}}}}
\put(8340,-3717){\makebox(0,0)[b]{\smash{{\SetFigFont{9}{10.8}{\familydefault}{\mddefault}{\updefault}{\color[rgb]{0,0,0}$3 \cdot 4$}%
}}}}
\put(7430,-3714){\makebox(0,0)[b]{\smash{{\SetFigFont{9}{10.8}{\familydefault}{\mddefault}{\updefault}{\color[rgb]{0,0,0}$2 \cdot 3$}%
}}}}
\put(6530,-3713){\makebox(0,0)[b]{\smash{{\SetFigFont{9}{10.8}{\familydefault}{\mddefault}{\updefault}{\color[rgb]{0,0,0}$1 \cdot 4$}%
}}}}
\put(5628,-1913){\makebox(0,0)[b]{\smash{{\SetFigFont{9}{10.8}{\familydefault}{\mddefault}{\updefault}{\color[rgb]{0,0,0}$\overline{1 \cdot 2}$}%
}}}}
\put(8340,-1910){\makebox(0,0)[b]{\smash{{\SetFigFont{9}{10.8}{\familydefault}{\mddefault}{\updefault}{\color[rgb]{0,0,0}$\overline{3 \cdot 4}$}%
}}}}
\put(6977,-119){\makebox(0,0)[b]{\smash{{\SetFigFont{9}{10.8}{\familydefault}{\mddefault}{\updefault}{\color[rgb]{0,0,0}$\overline{\bot}$}%
}}}}
\put(6530,-1914){\makebox(0,0)[b]{\smash{{\SetFigFont{9}{10.8}{\familydefault}{\mddefault}{\updefault}{\color[rgb]{0,0,0}$\overline{1 \cdot 4}$}%
}}}}
\put(7430,-1913){\makebox(0,0)[b]{\smash{{\SetFigFont{9}{10.8}{\familydefault}{\mddefault}{\updefault}{\color[rgb]{0,0,0}$\overline{2 \cdot 3}$}%
}}}}
\put(5628,-3714){\makebox(0,0)[b]{\smash{{\SetFigFont{9}{10.8}{\familydefault}{\mddefault}{\updefault}{\color[rgb]{0,0,0}$1 \cdot 2$}%
}}}}
\put(6378,-2825){\makebox(0,0)[lb]{\smash{{\SetFigFont{9}{10.8}{\familydefault}{\mddefault}{\updefault}{\color[rgb]{0,0,0}$1\diamond 4$}%
}}}}
\put(7288,-2821){\makebox(0,0)[lb]{\smash{{\SetFigFont{9}{10.8}{\familydefault}{\mddefault}{\updefault}{\color[rgb]{0,0,0}$2\diamond 3$}%
}}}}
\put(5481,-2829){\makebox(0,0)[lb]{\smash{{\SetFigFont{9}{10.8}{\familydefault}{\mddefault}{\updefault}{\color[rgb]{0,0,0}$1\diamond 2$}%
}}}}
\put(8188,-2811){\makebox(0,0)[lb]{\smash{{\SetFigFont{9}{10.8}{\familydefault}{\mddefault}{\updefault}{\color[rgb]{0,0,0}$3\diamond 4$}%
}}}}
\put(8338,-4610){\makebox(0,0)[b]{\smash{{\SetFigFont{9}{10.8}{\familydefault}{\mddefault}{\updefault}{\color[rgb]{0,0,0}$3$}%
}}}}
\put(7437,-4612){\makebox(0,0)[b]{\smash{{\SetFigFont{9}{10.8}{\familydefault}{\mddefault}{\updefault}{\color[rgb]{0,0,0}$4$}%
}}}}
\put(8344,-1006){\makebox(0,0)[b]{\smash{{\SetFigFont{9}{10.8}{\familydefault}{\mddefault}{\updefault}{\color[rgb]{0,0,0}$\overline{3}$}%
}}}}
\put(7443,-1010){\makebox(0,0)[b]{\smash{{\SetFigFont{9}{10.8}{\familydefault}{\mddefault}{\updefault}{\color[rgb]{0,0,0}$\overline{4}$}%
}}}}
\put(6517,-1011){\makebox(0,0)[b]{\smash{{\SetFigFont{9}{10.8}{\familydefault}{\mddefault}{\updefault}{\color[rgb]{0,0,0}$\overline{2}$}%
}}}}
\put(5618,-1013){\makebox(0,0)[b]{\smash{{\SetFigFont{9}{10.8}{\familydefault}{\mddefault}{\updefault}{\color[rgb]{0,0,0}$\overline{1}$}%
}}}}
\end{picture}%

\caption{$\mathbf{T}_4$ in Lemma~\ref{lemma:unifkcase1}.}\label{fig:posk1}
\end{figure}

For $j,k \in \{1,\dots,n\}$, we define $u_n \colon \mathbf{T}_n \to \mathbf{Q}$ by putting, 
\begin{enumerate}
\item[] $u_n(\bot)=x$;
\item[] $u_n(j)=a$, $u_n(j\cdot k)=c$, $u_n(j\diamond k)=y$, for all $j,j\cdot k,j\diamond k \in T_n$ with $j$ odd; 
\item[] $u_n(j)=b$, $u_n(j\cdot k)=d$, $u_n(j\diamond k)=z$, for all $j,j\cdot k,j\diamond k \in T_n$ with $j$ even;
\end{enumerate}
and, for all $y \in \{\bot,j,j\cdot k,j\diamond k \mid \text{$j<k$ in $\{1,\dots,n\}$ and $j + k$ is odd} \} \subseteq T_n$, 
\begin{enumerate}
\item[] $u_n(i(y))=i(u_n(y))$.
\end{enumerate}
It follows by a straightforward computation that $u_n \colon \mathbf{T}_n \to \mathbf{Q}$ is a morphism in $\FPK$. Therefore $u_n$ is a unifier for $\mathbf{Q}$ in $V$ for each $n\in\mathbb{N}$.

Let $u \colon \mathbf{P} \to \mathbf{Q}$ be a unifier for $\mathbf{Q}$. We show that $u_n \leq u$ implies $|P| \geq n$.
Let $u_n=u \circ f$.  We claim that $f(j) \neq f(k)$ for all $j<k$ with $j,k \in T_n$.
If $j + k$ is odd, it is straightforward.  If $j+k$ is even, without loss of generality assume $j,k$ both odd.
Then let $l$  be an even number such that $j<l<k$.
By construction $j,l \leq j\cdot l \leq i(j\cdot l)$, then
we have $f(j),f(l) \leq f(j\cdot l) \leq i(f(j\cdot l))$.  By $(K_1)$, 
$f(j) \vee f(l)$ exists in $\mathbf{P}$ and it satisfies: $$f(j) \vee f(l) \leq i(f(j) \vee f(l))\text{.}$$
Then, $$f(j),f(l) \leq f(j) \vee f(l) \leq f(j\cdot l)\text{.}$$
Similarly, $f(l) \vee f(k)$ exists in $\mathbf{P}$ and it satisfies: 
$$f(l) \vee f(k) \leq i(f(l) \vee f(k))\text{.}$$
Then, 
$$f(l),f(k) \leq f(l) \vee f(k) \leq f(l\cdot k)\text{.}$$
By way of contradiction assume $f(j)=f(k)$, then
$$f(j)=f(k),f(l) \leq f(j) \vee f(l)=f(l) \vee f(k) \leq f(j\cdot l),f(l\cdot k)\text{,}$$
and applying $u$ through, since $u_n=u \circ f$,
$$a,b \leq u(f(l) \vee f(k)) \leq c,d\text{,}$$
which contradicts  $(iii)$.  Therefore, $|P| \geq n$. 

This proves that $\mathrm{type}(V)\neq 1$. Then $\mathrm{type}(V)=0$. By Lemma~\ref{Lemma:UpsetNullary}, 
$\mathrm{type}_{\K}(\mathbf{Q})=0$, as desired.
\end{proof}

\begin{lemma}\label{lemma:unifkcase2}
Let $\mathbf{Q}=(Q,\leq,i)$ be an instance of $\textsc{Unif}(\K)$.
If there exist $x,a,b,c,d,e,f,y,z,w\in Q$ such that:
\begin{itemize}
\item[$(i)$] $x\leq a,b,c$; $a\leq d,e$; $b\leq d,f$; $c\leq e,f$;  
\item[$(ii)$] $d\leq y=i(y)$; $e\leq z=i(z)$; $f\leq w=i(w)$; 
\item[$(iii)$] there does not exist $g \in Q$ such that $a,b,c \leq g\leq i(g)$,
\end{itemize}
then $\mathrm{type}_{\K}(\mathbf{Q})=0$ (see Figure~\ref{fig:kconf2}).
\begin{figure}
\centering
\begin{picture}(0,0)%
\includegraphics{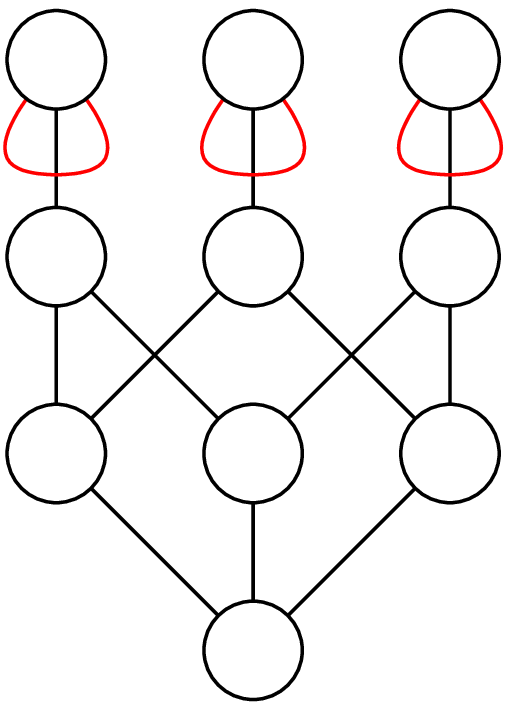}%
\end{picture}%
\setlength{\unitlength}{4144sp}%
\begingroup\makeatletter\ifx\SetFigFont\undefined%
\gdef\SetFigFont#1#2#3#4#5{%
  \reset@font\fontsize{#1}{#2pt}%
  \fontfamily{#3}\fontseries{#4}\fontshape{#5}%
  \selectfont}%
\fi\endgroup%
\begin{picture}(2314,3180)(6269,-5701)
\put(7430,-3714){\makebox(0,0)[b]{\smash{{\SetFigFont{9}{10.8}{\familydefault}{\mddefault}{\updefault}{\color[rgb]{0,0,0}$e$}%
}}}}
\put(6530,-3713){\makebox(0,0)[b]{\smash{{\SetFigFont{9}{10.8}{\familydefault}{\mddefault}{\updefault}{\color[rgb]{0,0,0}$d$}%
}}}}
\put(7434,-4613){\makebox(0,0)[b]{\smash{{\SetFigFont{9}{10.8}{\familydefault}{\mddefault}{\updefault}{\color[rgb]{0,0,0}$b$}%
}}}}
\put(6529,-4604){\makebox(0,0)[b]{\smash{{\SetFigFont{9}{10.8}{\familydefault}{\mddefault}{\updefault}{\color[rgb]{0,0,0}$a$}%
}}}}
\put(7424,-2806){\makebox(0,0)[b]{\smash{{\SetFigFont{9}{10.8}{\familydefault}{\mddefault}{\updefault}{\color[rgb]{0,0,0}$z$}%
}}}}
\put(6524,-2806){\makebox(0,0)[b]{\smash{{\SetFigFont{9}{10.8}{\familydefault}{\mddefault}{\updefault}{\color[rgb]{0,0,0}$y$}%
}}}}
\put(8327,-2808){\makebox(0,0)[b]{\smash{{\SetFigFont{9}{10.8}{\familydefault}{\mddefault}{\updefault}{\color[rgb]{0,0,0}$w$}%
}}}}
\put(8327,-3708){\makebox(0,0)[b]{\smash{{\SetFigFont{9}{10.8}{\familydefault}{\mddefault}{\updefault}{\color[rgb]{0,0,0}$f$}%
}}}}
\put(8327,-4608){\makebox(0,0)[b]{\smash{{\SetFigFont{9}{10.8}{\familydefault}{\mddefault}{\updefault}{\color[rgb]{0,0,0}$c$}%
}}}}
\put(7427,-5508){\makebox(0,0)[b]{\smash{{\SetFigFont{9}{10.8}{\familydefault}{\mddefault}{\updefault}{\color[rgb]{0,0,0}$x$}%
}}}}
\end{picture}%

\caption{Subposet of $\mathbf{Q}$ in Lemma~\ref{lemma:unifkcase2} and Lemma~\ref{lemma:unifMcase3}.}\label{fig:kconf2}
\end{figure}
\end{lemma}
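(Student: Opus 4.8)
The plan is to follow the template of Lemma~\ref{lemma:unifkcase1}, upgrading the two-dimensional ``diamond'' obstruction treated there to the three-dimensional ``octahedral'' one recorded by hypotheses $(i)$--$(iii)$. First some reductions. Since $\mathbf{Q}$ is finite I assume without loss of generality $x \in \mathrm{min}(\mathbf{Q})$. From $(i)$ and $(ii)$ I record that $a,b,c$ lie in the lower half $\{q \in Q \mid q \leq i(q)\}$: for instance $a \leq d \leq y=i(y)$ gives $a \leq y$, and applying the order-reversing involution yields $y=i(y)\leq i(a)$, so $a \leq y \leq i(a)$ and hence $a \leq i(a)$; symmetrically $b,c,d,e,f$ all lie in the lower half. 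By $(iii)$ the elements $a,b,c$ then have \emph{no} common upper bound inside this lower half, and in particular are pairwise distinct. Now let $V=\{ u\colon \mathbf{P}\to \mathbf{Q}\in U_{\K}(\mathbf{Q})\mid x\in u(P)\}$. By Lemma~\ref{Lem:UpKleene}, $V$ is a directed upset of $U_{\K}(\mathbf{Q})$, so by Lemma~\ref{Lemma:UpsetNullary} it suffices to prove $\mathrm{type}(V)=0$, and by directedness together with Lemma~\ref{Lemma:DirectedType} it is enough to rule out $\mathrm{type}(V)=1$. Thus I must exhibit, for each $n$, a unifier $u_n\colon \mathbf{T}_n\to \mathbf{Q}$ in $V$ forcing the domain of any more general unifier to contain at least $n$ elements, so that no single finite unifier can dominate all the $u_n$.

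The core is the construction of $\mathbf{T}_n$. As in Lemma~\ref{lemma:unifkcase1}, $\mathbf{T}_n$ is a finite involutive poset presented over a formal alphabet, with a bottom $\bot$ mapped to $x$ and its dual $\overline{\bot}$, ``base'' letters $1,\dots,n$ that are $3$-coloured and mapped by $u_n$ onto $a,b,c$, ``merge'' letters $j\cdot k$ placed above selected differently-coloured pairs and mapped to the matching one of $d,e,f$, and self-dual letters $j\diamond k$ mapped to the matching one of $y,z,w$; the involution fixes each $j\diamond k$ and swaps every other letter with its barred copy, while the order is generated coverwise exactly as in Lemma~\ref{lemma:unifkcase1}. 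The decisive design constraint is that the \emph{merge graph} on the base letters (recording which coloured pairs receive a join) must be \emph{triangle-free}: were some triple of the three colours pairwise joined, then condition $(M_3)$ applied inside $\mathbf{T}_n$ would force the triple join to exist, and $u_n$ would have to send it to a common upper bound of $a,b,c$ in the lower half of $\mathbf{Q}$, contradicting $(iii)$ and destroying the well-definedness of $u_n$. I then verify that with a triangle-free merge graph $\mathbf{T}_n$ satisfies $(K_1)$, $(K_2)$, $(M_2)$, $(M_3)$, and that $u_n$ is a genuine $\FPK$-morphism lying in $V$.

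The forcing step runs as in Lemma~\ref{lemma:unifkcase1}, but the contradiction is produced three-dimensionally. Write $u_n=u\circ f$. Base letters of different colours already have different $u_n$-values, hence different $f$-values, so I concentrate on two same-coloured base letters $j,k$, say both mapped to $a$, and assume for contradiction $f(j)=f(k)=:p$. The construction is arranged so that $j$ has a merge-neighbour $l$ of the second colour and $k$ a merge-neighbour $m$ of the third colour, with $l$ and $m$ themselves merge-adjacent, and with no three of $\{j,k,l,m\}$ mutually adjacent (so $\mathbf{T}_n$ stays triangle-free). Each merge edge yields, via $(K_1)$, a join in the lower half of $\mathbf{P}$, so $p\vee f(l)$, $p\vee f(m)$ and $f(l)\vee f(m)$ all exist there; thus $p,f(l),f(m)$ are pairwise compatible in the lower half. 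Applying $(M_3)$ inside the projective algebra $\mathbf{P}$, the join $p\vee f(l)\vee f(m)$ exists in the lower half of $\mathbf{P}$, and pushing it through $u$ produces an element $g\geq a,b,c$ with $g\leq i(g)$ in $\mathbf{Q}$ --- exactly the witness forbidden by $(iii)$. Hence $f$ is injective on the base letters, $|P|\geq n$, so $\mathrm{type}(V)\neq 1$; therefore $\mathrm{type}(V)=0$ and, by Lemma~\ref{Lemma:UpsetNullary}, $\mathrm{type}_{\K}(\mathbf{Q})=0$.

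The main obstacle I anticipate is the construction itself. Unlike Lemma~\ref{lemma:unifkcase1}, where the complete bipartite merge structure is triangle-free for free, here the three colours make triangle-freeness a genuine constraint that fights against the need to supply, for \emph{every} same-colour pair $(j,k)$ --- near or far apart in the index --- the bridging neighbours $l,m$ described above. Producing a single index rule that simultaneously keeps the merge graph triangle-free (so that $u_n$ is well defined and $\mathbf{T}_n$ verifies $(M_3)$) and nonetheless forces the $p,f(l),f(m)$ triangle upon any same-colour identification, together with the bookkeeping needed to check $(K_1)$, $(K_2)$, $(M_2)$, $(M_3)$ for $\mathbf{T}_n$, is where the real work lies.
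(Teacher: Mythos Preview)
Your overall architecture (reduce to the directed upset $V$, build unifiers $u_n\colon\mathbf{T}_n\to\mathbf{Q}$, and force $|P|\ge n$ via $(K_1)$ and $(M_3)$) matches the paper, but the \emph{construction} of $\mathbf{T}_n$ is genuinely different, and the difference matters because it dissolves exactly the obstacle you flag at the end.

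You propose to $3$-colour the base letters $1,\dots,n$ onto $a,b,c$ and then build a triangle-free merge graph rich enough that every same-colour pair admits a bridging path of the two remaining colours. You correctly identify this as the hard part and leave it open. The paper avoids this combinatorial puzzle entirely: it sends \emph{all} base letters $j$ to $a$, and manufactures the $b$- and $c$-coloured elements separately as auxiliary letters $j\cdot k$ (one for each \emph{ordered} pair $j\neq k$), with $u_n(j\cdot k)=b$ if $j<k$ and $u_n(j\cdot k)=c$ if $j>k$. The ``merge'' level then consists of joins $j\circ j\cdot k$ (sent to $d$ or $f$) and $j\cdot k\circ k\cdot j$ (sent to $e$), each capped by a fixed point sent to $y,z,w$. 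With this layout $\mathbf{T}_n$ is in fact a \emph{lattice} (it satisfies $(M_1)$, hence $(K_1)$ and $(K_2)$ for free), so there is no triangle-freeness constraint to manage at all.

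The forcing step is then immediate and uniform: for any $j<k$, the three elements $h(j),h(j\cdot k),h(k\cdot j)$ are pairwise joinable in the lower half of $\mathbf{P}$ (witnessed by $h(j\circ j\cdot k)$, $h(j\cdot k\circ k\cdot j)$, $h(k\circ k\cdot j)$), so if $h(j)=h(k)$ then $(M_3)$ yields their triple join in the lower half, and pushing through $u$ gives $a,b,c\le g\le i(g)$, contradicting $(iii)$. No bridging neighbours need to be found; the auxiliary letters $j\cdot k$ and $k\cdot j$ are tailor-made for the pair $(j,k)$.

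In short, your plan is not wrong in spirit, but it leaves the crucial graph-construction problem unsolved, whereas the paper's asymmetric encoding (all base letters to $a$, ordered pairs to $b$ or $c$) makes $\mathbf{T}_n$ a lattice and removes the obstacle altogether.
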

\begin{proof}
Since $\mathbf{Q}$ is a finite poset, we assume without loss of generality that $x \in \mathrm{min}(\mathbf{Q})$.
By $(i)$ and $(iii)$, we have $|\{a,b,c\}|=|\{d,e,f\}|=3$.  Let,
$$V=\{ u \colon \mathbf{P} \rightarrow \mathbf{Q} \in U_{\K}(\mathbf{Q}) \mid x \in u(P)\}\text{.}$$

By Lemma \ref{Lem:UpKleene}, $V$ is an directed upset in $U_{\K}(\mathbf{Q})$.
Then Lemma \ref{Lemma:DirectedType}, proves ${\rm type}(V)\in\{0,1\}$.
We show that $\mathrm{type}(V) \neq 1$.
For every $n \in \mathbb{N}$, we define a unifier $u_n \colon \mathbf{T}_n \to \mathbf{Q}$ in $V$ as follows.  Let
$T_n=L \cup I \cup \overline{L}$ where,
\begin{align*}
L =& \{ \bot,j,j\cdot k,j\circ j\cdot k \mid \text{$j \neq k$ in $\{1,\dots,n\}$} \} \ \cup\\
& \{ j\cdot k\circ k\cdot j \mid \text{$j<k$ in $\{1,\dots,n\}$} \}\text{,} \\
\overline{L} =& \{ \overline{v} \mid v \in L \}\text{,}\\
I =& \{j \diamond j\cdot k  \mid \text{$j\neq k$ in $\{1,\dots,n\}$} \}\cup \{ j\cdot k \diamond k\cdot j \mid \text{$j<k$ in $\{1,\dots,n\}$} \}\text{;}
\end{align*}
here, $T_n$ is a formal language over $A \cup \{ \overline{s} \mid s \in A\}$ with $A=\{\bot,\circ,\diamond,\cdot ,1,\dots,n\}$.
The map $i \colon T_n \to T_n$ is defined by: 
\begin{enumerate}
\item[] $i(v)=v$ for all $v \in I$;
\item[] $i(v)=\overline{v}$ and $i(\overline{v})=v$ for all $v \in L$.
\end{enumerate}
The partial order over $T_n$ is defined by the cover relation 
containing the covers listed below, where $j, k \in \{1,\dots,n\}$: 
\begin{enumerate}
\item[] $\bot \prec j,j\cdot k$ for all $j,j\cdot k \in T_n$;
\item[] $j, j\cdot k\prec j\circ j\cdot k$ for all $j,j\cdot k,j\circ j\cdot k \in T_n$;
\item[] $j\cdot k,k\cdot j \prec j\cdot k\circ k\cdot j$ for all $j\cdot k,k\cdot j \in T_n$;
\item[] $j\circ j\cdot k \prec j \diamond j\cdot k$ for all $j\circ j\cdot k, j \diamond j\cdot k \in T_n$;
\item[] $j\cdot k\circ k\cdot j \prec j\cdot k \diamond k\cdot j$ for all $j\cdot k\circ k\cdot j,j\cdot k \diamond k\cdot j \in T_n$;
\end{enumerate}
and, for each $x \prec y$ in the list, the cover 
\begin{enumerate}
\item[] $i(y) \prec i(x)$.
\end{enumerate}
It is easy to check that $\mathbf{T}_n$ satisfies $(M_1)$, $(M_2)$, and $(M_3)$.
Notice that $(M_1)$ implies $(K_1)$ and $(K_2)$. 
Figure~\ref{fig:posk2} provides the Hasse diagram of $\mathbf{T}_2$.
\begin{figure}
\centering
\begin{picture}(0,0)%
\includegraphics{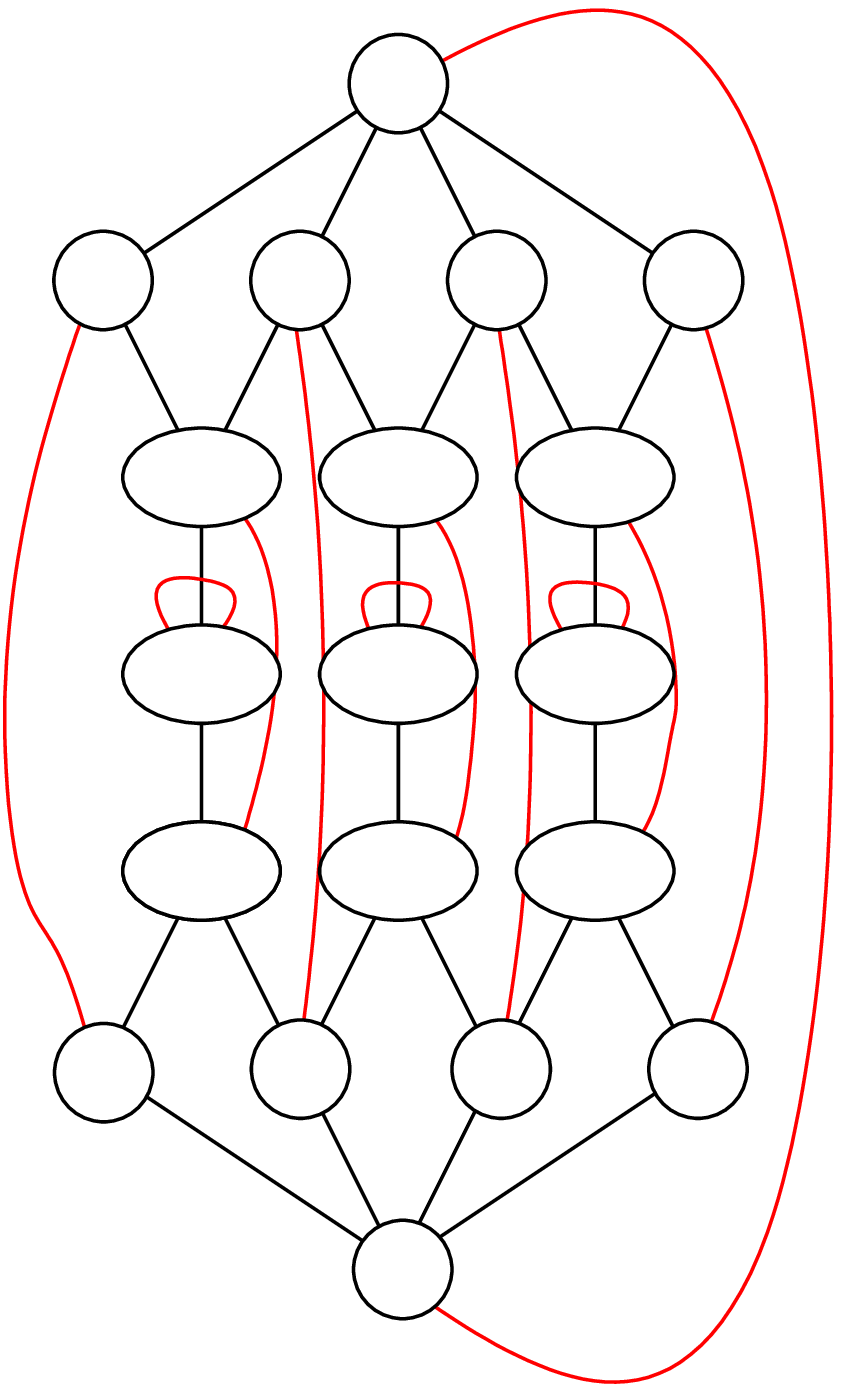}%
\end{picture}%
\setlength{\unitlength}{4144sp}%
\begingroup\makeatletter\ifx\SetFigFont\undefined%
\gdef\SetFigFont#1#2#3#4#5{%
  \reset@font\fontsize{#1}{#2pt}%
  \fontfamily{#3}\fontseries{#4}\fontshape{#5}%
  \selectfont}%
\fi\endgroup%
\begin{picture}(3825,6317)(10329,-7370)
\put(12154,-3251){\makebox(0,0)[b]{\smash{{\SetFigFont{9}{10.8}{\familydefault}{\mddefault}{\updefault}{\color[rgb]{0,0,0}$\overline{1 \cdot 2\circ 2 \cdot 1}$}%
}}}}
\put(10801,-5956){\makebox(0,0)[b]{\smash{{\SetFigFont{9}{10.8}{\familydefault}{\mddefault}{\updefault}{\color[rgb]{0,0,0}$1$}%
}}}}
\put(11701,-5956){\makebox(0,0)[b]{\smash{{\SetFigFont{9}{10.8}{\familydefault}{\mddefault}{\updefault}{\color[rgb]{0,0,0}$1 \cdot 2$}%
}}}}
\put(12151,-6856){\makebox(0,0)[b]{\smash{{\SetFigFont{9}{10.8}{\familydefault}{\mddefault}{\updefault}{\color[rgb]{0,0,0}$\bot$}%
}}}}
\put(13501,-2356){\makebox(0,0)[b]{\smash{{\SetFigFont{9}{10.8}{\familydefault}{\mddefault}{\updefault}{\color[rgb]{0,0,0}$\overline{2}$}%
}}}}
\put(13501,-5956){\makebox(0,0)[b]{\smash{{\SetFigFont{9}{10.8}{\familydefault}{\mddefault}{\updefault}{\color[rgb]{0,0,0}$2$}%
}}}}
\put(12601,-5956){\makebox(0,0)[b]{\smash{{\SetFigFont{9}{10.8}{\familydefault}{\mddefault}{\updefault}{\color[rgb]{0,0,0}$2 \cdot 1$}%
}}}}
\put(12137,-1449){\makebox(0,0)[b]{\smash{{\SetFigFont{9}{10.8}{\familydefault}{\mddefault}{\updefault}{\color[rgb]{0,0,0}$\overline{\bot}$}%
}}}}
\put(10801,-2356){\makebox(0,0)[b]{\smash{{\SetFigFont{9}{10.8}{\familydefault}{\mddefault}{\updefault}{\color[rgb]{0,0,0}$\overline{1}$}%
}}}}
\put(11701,-2356){\makebox(0,0)[b]{\smash{{\SetFigFont{9}{10.8}{\familydefault}{\mddefault}{\updefault}{\color[rgb]{0,0,0}$\overline{1 \cdot 2}$}%
}}}}
\put(12601,-2356){\makebox(0,0)[b]{\smash{{\SetFigFont{9}{10.8}{\familydefault}{\mddefault}{\updefault}{\color[rgb]{0,0,0}$\overline{2 \cdot 1}$}%
}}}}
\put(11254,-3259){\makebox(0,0)[b]{\smash{{\SetFigFont{9}{10.8}{\familydefault}{\mddefault}{\updefault}{\color[rgb]{0,0,0}$\overline{1\circ 1 \cdot 2}$}%
}}}}
\put(13056,-3259){\makebox(0,0)[b]{\smash{{\SetFigFont{9}{10.8}{\familydefault}{\mddefault}{\updefault}{\color[rgb]{0,0,0}$\overline{2\circ 2 \cdot 1}$}%
}}}}
\put(11251,-4156){\makebox(0,0)[b]{\smash{{\SetFigFont{9}{10.8}{\familydefault}{\mddefault}{\updefault}{\color[rgb]{0,0,0}$1\diamond 1 \cdot 2$}%
}}}}
\put(12151,-4156){\makebox(0,0)[b]{\smash{{\SetFigFont{9}{10.8}{\familydefault}{\mddefault}{\updefault}{\color[rgb]{0,0,0}$1 \cdot 2\diamond 2 \cdot 1$}%
}}}}
\put(11251,-5056){\makebox(0,0)[b]{\smash{{\SetFigFont{9}{10.8}{\familydefault}{\mddefault}{\updefault}{\color[rgb]{0,0,0}$1\circ 1 \cdot 2$}%
}}}}
\put(13096,-5056){\makebox(0,0)[b]{\smash{{\SetFigFont{9}{10.8}{\familydefault}{\mddefault}{\updefault}{\color[rgb]{0,0,0}$2\circ 2 \cdot 1$}%
}}}}
\put(12151,-5056){\makebox(0,0)[b]{\smash{{\SetFigFont{9}{10.8}{\familydefault}{\mddefault}{\updefault}{\color[rgb]{0,0,0}$1 \cdot 2\circ 2 \cdot 1$}%
}}}}
\put(13051,-4156){\makebox(0,0)[b]{\smash{{\SetFigFont{9}{10.8}{\familydefault}{\mddefault}{\updefault}{\color[rgb]{0,0,0}$2\diamond 2 \cdot 1$}%
}}}}
\end{picture}%

\caption{$\mathbf{T}_2$ in Lemma~\ref{lemma:unifkcase2}.}\label{fig:posk2}
\end{figure}

We define $u_n \colon \mathbf{T}_n \to \mathbf{Q}$ as follows, where $j,k \in \{1,\dots,n\}$: 
\begin{enumerate}
\item[]  $u_n(\bot)=x$;
\item[] $u_n(j)=a$ for all $j \in T_n$;
\item[] $u_n(j\cdot k)=b$ for all $j\cdot k \in T_n$ with $j<k$;
\item[] $u_n(j\cdot k)=c$ for all $j\cdot k \in T_n$ with $k<j$;
\item[] $u_n(j \circ j\cdot k)=d$ for all $j \circ j\cdot k \in T_n$ with $j<k$;
\item[] $u_n(j\cdot k \circ k\cdot j)=e$ for all $j\cdot k \circ k\cdot j \in T_n$ with $j<k$;
\item[] $u_n(j \circ j\cdot k)=f$ for all $j \circ j\cdot k \in T_n$ with $k<j$;
\item[] $u_n(j \diamond j\cdot k)=y$ for all $j \diamond j\cdot k \in T_n$ with $j<k$;
\item[] $u_n(j\cdot k \diamond k\cdot j)=z$ for all $j\cdot k \diamond k\cdot j \in T_n$ with $j<k$;
\item[] $u_n(j \diamond j\cdot k)=w$ for all $j \diamond j\cdot k \in T_n$ with $k<j$;
\end{enumerate}
and, for all $y \in L \cup I \subseteq T_n$, 
\begin{enumerate}
\item[] $u_n(i(y))=i(u_n(y))$.
\end{enumerate}
It is easy to check that $u_n \colon \mathbf{T}_n \to \mathbf{Q}$ is a unifier for $\mathbf{Q}$ in $V$.

Let $u \colon \mathbf{P} \to \mathbf{Q}$ be a unifier for $\mathbf{Q}$ in $V$,
where $\mathbf{P}=(P,\leq,i)$. We show that $u_n \leq u$ implies $|P| \geq n$.
Let $u_n=u \circ h$.  We claim that $h(j) \neq h(k)$ for all $j<k$ in $\{1,\dots,n\}$.  Let $j<k$ in $\{1,\dots,n\}$.
By construction,
$$j,j\cdot k \leq j \circ j\cdot k \leq i(j \circ j\cdot k) \leq i(j),i(j\cdot k)\text{,}$$
then
$$h(j),h(j\cdot k) \leq h(j \circ j\cdot k) \leq i(h(j \circ j\cdot k)) \leq i(h(j)),i(h(j\cdot k))\text{,}$$
and by $(K_1)$, $h(j) \vee h(j\cdot k)$ exists in $\mathbf{P}$ and
$$h(j) \vee h(j\cdot k) \leq i(h(j) \vee h(j\cdot k))\text{,}$$
so that $$h(j),h(j\cdot k) \leq h(j) \vee h(j\cdot k) \leq h(j \circ j\cdot k)\text{.}$$
Similarly,
$$h(j\cdot k),h(k\cdot j) \leq h(j\cdot k) \vee h(k\cdot j) \leq h(j\cdot k \circ k\cdot j)\text{,}$$
and
$$h(k\cdot j),h(k) \leq h(k\cdot j) \vee h(k) \leq h(k \circ k\cdot j)\text{.}$$
If we assume the contrary, that is, $h(j)=h(k)$, then
$$h(j) \vee h(l)=h(l) \vee h(k);$$
and applying $(M_3)$ to $h(j),h(j\cdot k),h(k\cdot j)$, we have
$$h(j),h(j\cdot k),h(k\cdot j) \leq h(j) \vee h(j\cdot k) \vee h(k\cdot j) \leq i(h(j) \vee h(j\cdot k) \vee h(k\cdot j))\text{.}$$
Applying $u$ through, recalling that $u_n=u \circ h$, we have
$$a,b,c \leq u(h(j) \vee h(j\cdot k) \vee h(k\cdot j)) \leq i(u(h(j) \vee h(j\cdot k) \vee h(k\cdot j)))\text{,}$$
which contradicts clause $(iii)$ in the statement. 

This proves that $\mathrm{type}(V)\neq 1$. Then $\mathrm{type}(V)=0$. Now, by Lemma~\ref{Lemma:UpsetNullary} $\mathrm{type}_{\K}(\mathbf{Q})=0$, as desired.
\end{proof}

The proof of the main result in this section (Theorem \ref{Theo:UnifClassKleene}) relies on the following notion.
\begin{definition}[Kleene Unification Core]\label{def:qprime} 
Let $\mathbf{Q}=(Q,\leq,i) \in \FPK$.  The \emph{Kleene unification core} of $\mathbf{Q}$ 
is the structure $\mathbf{Q'}=(Q',\leq',i') \in \FPK$ where:
\begin{enumerate}
\item[$(i)$] $Q'=\{x,i(x)\in Q \mid \text{$x\leq z=i(z)$ for some $z\in Q$}\}$;
\item[$(ii)$] $x \leq' y$ iff,
$x \leq y$ and either of the following three cases occurs:
\begin{enumerate}
\item[$(a)$] $x\leq i(x)$ and $y\leq i(y)$;
\item[$(b)$] $i(x)\leq x$ and $i(y)\leq y$;
\item[$(c)$] $x \leq z=i(z) \leq y$ for some $z\in Q$;
\end{enumerate}
\item[$(iii)$] $i'(x)=i(x)$ for all $x \in Q'$.
\end{enumerate}
\end{definition}
The following lemma justifies the terminology introduced. 
\begin{lemma}\label{Lem:UK}
Let $\mathbf{Q}=(Q,\leq,i) \in \FPK$ and $\mathbf{Q'} \in \FPK$ be its Kleene unification core.
\begin{enumerate}
\item[$(i)$] If $u \colon \mathbf{P} \to \mathbf{Q}$ is a unifier for $\mathbf{Q}$,
then $u(P) \subseteq Q'$ and $u\colon \mathbf{P}\rightarrow \mathbf{Q'}$ is a unifier for $\mathbf{Q'}$.
\item[$(ii)$] $U_{\K}(\mathbf{Q}) \simeq U_{\K}(\mathbf{Q}')$.
\item[$(iii)$] $\mathbf{Q'}=(Q',\leq',i') \in \FPK$ satisfies $(M_2)$ and $(K_2)$.
\end{enumerate}
\end{lemma}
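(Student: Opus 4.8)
The plan is to treat part $(i)$ as the technical core, deduce part $(ii)$ as a formal consequence, and obtain part $(iii)$ from a structural reading of the order $\leq'$. Throughout I will use the partition of $Q'$ induced by the involution: call $x$ \emph{low} if $x \leq i(x)$ and \emph{high} if $i(x) \leq x$ (fixed points are both). The defining condition $(i)$ of the core says exactly that a low $x \in Q'$ lies below some fixed point $z = i(z)$, and a high $x \in Q'$ lies above some fixed point. Two observations drive everything. First, since $\mathbf{Q} \in \FPK$, every element is low or high. Second, the crucial feature of $\leq'$ is that a comparison $x \leq' y$ with $x$ low and $y$ high is, by clause $(c)$, \emph{witnessed} by a fixed point $z$ with $x \leq z \leq y$; thus $\leq'$ retains precisely those comparabilities of $\mathbf{Q}$ that a fixed point separates.

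For part $(i)$ I would first prove $u(P) \subseteq Q'$. Fix $p \in P$; as $\mathbf{P} \in \FPK$, $p$ is comparable to $i(p)$. If $p$ is low, condition $(M_2)$ for $\mathbf{P}$ gives a fixed point $q = i(q) \geq p$, whence $u(q) = i(u(q))$ is a fixed point of $\mathbf{Q}$ with $u(p) \leq u(q)$, so $u(p) \in Q'$; the high case is dual. The harder task is monotonicity of $u \colon \mathbf{P} \to \mathbf{Q'}$ for $\leq'$ (commutation with $i' = i$ is automatic). Given $p_1 \leq p_2$, put $a = u(p_1) \leq u(p_2) = b$. If $a$ is strictly high then $b$ must be high, since $a \leq b$ with $b$ low would squeeze $p_1 = p_2$ and make $a$ fixed; this yields $a \leq' b$ by clause $(b)$. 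If $a$ is low and $b$ is low, clause $(a)$ applies. The remaining case, $a$ low and $b$ high, is the crux: I must manufacture a fixed point of $\mathbf{Q}$ between $a$ and $b$. Here $a$ low forces $p_1$ low and $b$ high forces $p_2$ high (else $u$ would reverse a sign), so $p_1$ and $i(p_2)$ are low and satisfy $p_1, i(p_2) \leq i(p_1), i(i(p_2)) = i(p_1), p_2$, which is exactly the hypothesis of $(K_2)$ for $\mathbf{P}$. This produces $w \in P$ with $w \leq i(w)$ and $w \geq p_1, i(p_2)$. Applying $u$, the element $u(w)$ is low, satisfies $u(w) \geq a$ and $u(w) \geq i(b)$, and—by the containment just proved—lies in $Q'$, hence below some fixed point $z = i(z)$ of $\mathbf{Q}$. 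Then $a \leq u(w) \leq z$ and $z = i(z) \leq i(u(w)) \leq i(i(b)) = b$, so $z$ witnesses clause $(c)$ and $a \leq' b$. Since $\mathbf{P}$ still satisfies $(K_1),(K_2),(M_2),(M_3)$, this makes $u \colon \mathbf{P} \to \mathbf{Q'}$ a unifier.

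Part $(ii)$ is then formal: the inclusion $\mathbf{Q'} \hookrightarrow \mathbf{Q}$ is a morphism of $\FPK$ because $\leq' \subseteq \leq$ and $i' = i$, so postcomposition sends unifiers of $\mathbf{Q'}$ to unifiers of $\mathbf{Q}$, while part $(i)$ supplies the reverse assignment by corestriction. These two assignments fix the domain $\mathbf{P}$ and the underlying map, are mutually inverse, and preserve factorizations $u_1 \circ f = u_2$ verbatim, giving the isomorphism (indeed equivalence of categories) $U_{\K}(\mathbf{Q}) \simeq U_{\K}(\mathbf{Q'})$. For part $(iii)$ I would first confirm $\mathbf{Q'} \in \FPK$: $\leq'$ is a partial order (reflexivity and antisymmetry are immediate, and transitivity is a short check on which of $(a),(b),(c)$ witnesses each step, a separator propagating across a chain), $i'$ is an order-reversing involution (clauses $(a)$ and $(b)$ interchange under $i$, and a separator for $x \leq' y$ separates $i(y) \leq' i(x)$), and each $x$ is comparable to $i'(x)$ since the fixed point witnessing $x \in Q'$ gives $x \leq z = i(z) \leq i(x)$, i.e.\ $x \leq' i(x)$ by clause $(c)$. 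Condition $(M_2)$ is then immediate: if $x \leq' i(x)$ then $x$ is low and the fixed point $z \in Q'$ with $x \leq z = i(z)$ satisfies $x \leq' z = i'(z)$. The essential point is $(K_2)$: from $x,y \leq' i'(y), i'(x)$ we read off $x \leq' i(y)$ with $x$ low and $i(y)$ high, so clause $(c)$ hands us a fixed point $z$ with $x \leq z \leq i(y)$; applying $i$ to $z \leq i(y)$ gives $y \leq z$, so $z$ is a common upper bound, is fixed hence low, lies in $Q'$, and $x,y \leq' z \leq' i'(z)$ all hold via clause $(c)$.

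I expect the monotonicity argument of part $(i)$ to be the main obstacle, as it is the only place where a fixed point of the \emph{target} $\mathbf{Q}$ must be produced from scratch; the resolution hinges on feeding the $(K_2)$-upper bound $w$ of the \emph{domain} back through $u$ and then invoking $u(P) \subseteq Q'$ to drop below a fixed point of $\mathbf{Q}$. By contrast, in $(iii)$ the separating fixed points are supplied for free by the very definition of $\leq'$.
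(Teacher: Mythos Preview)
Your proposal is correct and follows essentially the same route as the paper: part~$(i)$ is the core, handled by a low/high case split on $u(p_1),u(p_2)$ with the crux case resolved by applying $(K_2)$ to $p_1,i(p_2)$ in $\mathbf{P}$; part~$(ii)$ follows formally from the inclusion $\mathbf{Q'}\hookrightarrow\mathbf{Q}$; and part~$(iii)$ is read off the definition of $\leq'$, where clause~$(c)$ supplies the required fixed points. The only cosmetic difference is that in the crux case you push $u(w)$ into $Q'$ and find the separating fixed point on the $\mathbf{Q}$ side, whereas the paper upgrades $w$ to a fixed point already in $\mathbf{P}$ (via $(M_2)$) before applying $u$; both arguments are equivalent.
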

\begin{proof} 
$(i)$ Let $u \colon \mathbf{P} \to \mathbf{Q}$ in $\FPK$ be a unifier for $\mathbf{Q}$,
with $\mathbf{P}=(P,\leq_P,i_P)$.

We show that $u(P) \subseteq Q'$.  Let $x\in P$.  
Without loss of generality, we may assume $x\leq i_P(x)$. By $(M_2)$ there exists $z\in P$ such that $x\leq z=i_P(z)$. Then $u(x)\leq u(z)=i(u(z))$, concluding that $u(x)\in Q'$.
 
We show that $u\colon \mathbf{P}\rightarrow \mathbf{Q'}$ is a unifier for $\mathbf{Q'}$.  
For all $x \in P$, we have $u(i_P(x))=i(u(x))=i'(u(x))$ by part $(i)$ and Definition~\ref{def:qprime}$(iii)$.  
For monotonicity, let $x \leq_P y$.  If $u(x) \leq i(u(x))$ and $u(y) \leq i(u(y))$, 
or $i(u(x)) \leq u(x)$ and $i(u(y)) \leq u(y)$, then $u(x) \leq' u(y)$ by Definition~\ref{def:qprime}$(ii)$.  
Otherwise, assume that $u(x) \leq i(u(x))$, $i(u(y))\leq u(y)$, 
and there not exists $w \in Q'$ such that $u(x) \leq w=i(w) \leq u(y)$.  
Then, $u(x)<i(u(x))=u(i_P(x))$ and $u(i_P(y))=i(u(y))<u(y)$, 
which implies $x <_P i_P(x)$ and $i_P(y)<_P y$.  Since $x \leq_P y$ by hypothesis, 
we have $x,i_P(y) \leq_P i_P(x),y$.  By $(K_2)$, there exists $z \in P$ 
such that $x \leq_P z =_P i_P(z) \leq y$.  But then, 
$u(x) \leq u(z)=i(u(z)) \leq u(y)$, a contradiction.

$(ii)$ It follows from part $(i)$ and the fact that the inclusion map from $Q'$ into $Q$ 
is a morphism in $\FPK$. 

$(iii)$ The statement holds by Definition~\ref{def:qprime}.  In details,
if $x \leq' i'(x)=i(x)$, either $x=i(x)$ or there exists $y\in Q$ such that $x\leq y=i(y)\leq i(x)$, 
so that $(M_2)$ holds in $\mathbf{Q'}$.  For $(K_2)$,
let $x,y \in Q'$ be such that $x,y \leq' i'(y),i'(x)$.
We want to show that there exists $z \in Q'$ such that $x,y \leq' z \leq i'(z)$.
Since $i'$ is the restriction of $i$ to $Q'$, $x,y \leq' i(y),i(x)$.
If $x= i(x)$ or $y=i(y)$ the result follows straightforwardly.
If $x< i(x)$ and $y< i(y)$, since $x\leq' i(y)$ by $(c)$ there exists
$z \in Q$ such that $x\leq z =i(z)\leq i(y)$.
Then $x,y \leq' z =i'(z)$, and the result follows.
\end{proof}

\begin{theorem}\label{Theo:UnifClassKleene}
Let $\mathbf{Q}=(Q,\leq,i) \in \FPK$ be a solvable instance of $\textsc{Unif}(\K)$ and $\mathbf{Q'} \in \FPK$ be the Kleene unification core of $\mathbf{Q}$.  Then:
\begin{align*}
\mathrm{type}_{\K}(\mathbf{Q}) &=
\begin{cases}
1\text{,} & \text{iff } \mathbf{Q'} \text{ satisfies } (K_1) \text{ and } (M_3)\\
\omega\text{,} & \text{iff $\mathbf{Q}'$ does not satisfy $(K_1$)}\\
               & \text{but $[x,i(x)]_{Q'}$ satisfies $(K_1)$ and $(M_3)$ for each $x\in Q'$;}\\
0\text{,} & \text{otherwise.}
\end{cases}
\end{align*}
\end{theorem}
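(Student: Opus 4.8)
The plan is to reduce everything to the Kleene unification core and then mirror the three-case analysis of Theorem~\ref{Thm:UnifLat}. By Lemma~\ref{Lem:UK}$(ii)$ we have $U_{\K}(\mathbf{Q}) \simeq U_{\K}(\mathbf{Q}')$, so $\mathrm{type}_{\K}(\mathbf{Q}) = \mathrm{type}(U_{\K}(\mathbf{Q}'))$, and by Lemma~\ref{Lem:UK}$(iii)$ we may work inside $\mathbf{Q}'$, which already satisfies $(M_2)$ and $(K_2)$. Writing $S = \{x \in Q' \mid x \leq' i(x)\}$, which is nonempty by solvability (Remark~\ref{Rem:KleeneSolvable}), the first technical step I would establish is a normal form for unifiers: every solution object $\mathbf{P}=(P,\leq,i)$ satisfies $P = [p_0, i(p_0)]_{\mathbf{P}}$, where $p_0$ is the least element of $\{p \mid p \leq i(p)\}$ (which exists by $(K_1)$ for $\mathbf{P}$). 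Consequently any unifier $u \colon \mathbf{P} \to \mathbf{Q}'$ corestricts to the interval $[x, i(x)]_{Q'}$ with $x = u(p_0) \in S$, so $u$ factors through the inclusion $u_x \colon [x, i(x)]_{Q'} \hookrightarrow \mathbf{Q}'$ whenever the latter is itself a unifier. A companion observation is that every interval $[x,i(x)]_{Q'}$ inherits $(M_2)$ and $(K_2)$ from $\mathbf{Q}'$, so it is a valid solution object exactly when it satisfies $(K_1)$ and $(M_3)$.

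For the unitary case I would argue that if $\mathbf{Q}'$ satisfies $(K_1)$ and $(M_3)$, then it satisfies all four conditions of Theorem~\ref{th:kmain}, so $\mathrm{id}_{\mathbf{Q}'}$ is a unifier through which every other unifier factors; hence $\{\mathrm{id}_{\mathbf{Q}'}\}$ is a $\mu$-set and $\mathrm{type}_{\K}(\mathbf{Q}) = 1$. For the finitary case, assuming every interval $[x,i(x)]_{Q'}$ satisfies $(K_1)$ and $(M_3)$ but $\mathbf{Q}'$ fails $(K_1)$, I would show that the family $\{u_x \mid x \in \mathrm{min}(S)\}$ of interval inclusions is a $\mu$-set: cofinality is the factoring above (pushing $x$ down to a minimal element of $S$ enlarges the interval, hence the unifier), while $u_x \leq u_{x'}$ forces the factoring map to be an inclusion, so $u_x \leq u_{x'}$ iff $[x,i(x)]_{Q'} \subseteq [x',i(x')]_{Q'}$ iff $x' \leq' x$; distinct minimal elements thus yield incomparable maximal unifiers. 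Finally I would check that under this hypothesis $(K_1)$ for $\mathbf{Q}'$ is equivalent to $S$ having a least element (whence $S \subseteq [\min S, i(\min S)]_{Q'}$ and $(M_3)$ for $\mathbf{Q}'$ follows as well), so the failure of $(K_1)$ forces $|\mathrm{min}(S)| \geq 2$ and $\mathrm{type}_{\K}(\mathbf{Q}) = \omega$. This reconciliation also shows the three stated conditions are mutually exclusive and exhaustive, and that the ``otherwise'' clause is equivalent to the failure of $(K_1)$ or $(M_3)$ in some interval.

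The nullary case is the crux and the main obstacle: assuming some interval $[x,i(x)]_{Q'}$ fails $(K_1)$ or $(M_3)$, I must extract one of the explicit configurations of Lemma~\ref{lemma:unifkcase1} or Lemma~\ref{lemma:unifkcase2} inside $\mathbf{Q}'$ and invoke the corresponding lemma, whose conclusion $\mathrm{type}_{\K}(\mathbf{Q}')=0$ transfers to $\mathbf{Q}$ through the isomorphism. If some interval fails $(K_1)$, then the meet-semilattice $\{w \in [x,i(x)]_{Q'} \mid w \leq' i(w)\}$ has two elements $w_1 \neq w_2$ with no meet, hence two incomparable maximal common lower bounds $a,b$; taking $c=w_1$, $d=w_2$ and fixed points $y \geq' w_1$, $z \geq' w_2$ (via $(M_2)$) realizes the configuration of Lemma~\ref{lemma:unifkcase1}, where condition $(iii)$ holds because any $e$ with $a,b \leq' e \leq' w_1,w_2$ must satisfy $e \leq' i(e)$ (otherwise $i(w_1) <' w_1$ would contradict $w_1 \leq' i(w_1)$) and hence contradict the maximality of $a,b$. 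If instead all intervals satisfy $(K_1)$ but some interval fails $(M_3)$, its fixed-point-dominated set is a meet-semilattice that is not $3$-complete; I would reduce a minimal pairwise-bounded but unbounded family to a three-element witness $\{a,b,c\}$ by replacing a subset with its join, so that $\{a,b,c\}$ has pairwise joins $d,e,f$ but no common upper bound in $S$, and then condition $(iii)$ of Lemma~\ref{lemma:unifkcase2} follows since any common upper bound $g$ of $a,b,c$ with $g \leq' i(g)$ necessarily lies in the interval. The delicate points on which I expect to spend most care are this triple-reduction for the $(M_3)$-failure and the verification that the extracted elements genuinely realize conditions $(i)$--$(iii)$ with respect to the modified order $\leq'$ of the core rather than the ambient order of $\mathbf{Q}$.
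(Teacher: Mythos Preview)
Your proposal is correct and follows essentially the same route as the paper: reduce to the core via Lemma~\ref{Lem:UK}$(ii)$, handle the unitary case by recognizing $\mathbf{Q}'$ itself as a solution object, exhibit the interval inclusions $u_x$ for minimal $x$ as a finite $\mu$-set in the finitary case, and in the nullary case extract the configurations of Lemma~\ref{lemma:unifkcase1} and Lemma~\ref{lemma:unifkcase2}. Your write-up is in fact more explicit than the paper's on two points the paper leaves to the reader: the reduction of an $(M_3)$-failure to a three-element witness (your minimal-counterexample-plus-join replacement argument, which works because in a finite meet-semilattice every bounded pair has a join), and the reconciliation showing that under the interval hypothesis $(K_1)$ for $\mathbf{Q}'$ is equivalent to $S$ having a bottom, whence $(M_3)$ for $\mathbf{Q}'$ comes for free and the three cases are genuinely exclusive.
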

\begin{proof}
Assume first that $\mathbf{Q}'$ satisfies $(K_1)$ and $(M_3)$. By Lemma~\ref{Lem:UK}$(iii)$, $\mathbf{Q}'$ satisfies $(M_2)$ and $(K_2)$.  
Then $\DK(\mathbf{Q}')$ is projective by Theorem \ref{th:kmain} and $\mathrm{type}_{\K}(\mathbf{Q'})=1$.  Now by Lemma~\ref{Lem:UK}$(ii)$  
$\mathrm{type}_{\K}(\mathbf{Q})=\mathrm{type}_{\K}(\mathbf{Q'})=1$.  

Suppose that $\mathbf{Q}'$ does not satisfy $(K_1)$ and $[x,i(x)]_{Q'}$ satisfies $(K_1)$ and $(M_3)$ for all $x \leq i(x)$ in $\mathbf{Q}'$.
Since $\mathbf{Q}'$ satisfies $(M_2)$ and $(K_2)$, it follows that $[x,i(x)]_{Q'}$ satisfies $(M_2)$ and $(K_2)$ for all $x \leq i(x)$ in $\mathbf{Q}'$.
Thus, define for every $x \in \mathrm{min}(\mathbf{Q}')$ the (inclusion) unifier $u_{x} \colon [x,i(x)]_{Q'} \to \mathbf{Q}'$
by $u_{x}(z)=z$ for all $z \in [x,i(x)]_{Q'}$.  Clearly, there are finitely many unifiers of the form $u_{x}$ in $\mathbf{Q}'$, 
because $Q'$ is finite, and at least one such unifier because $Q'$ is nonempty.  
We claim that the above unifiers form $\mu$-set in $U_{\K}(\mathbf{Q}')$.
Clearly, if $x \neq y$ are minimal points in $Q'$ then $u_{x}\parallel u_{y}$. Now, 
let $u \colon \mathbf{P} \to \mathbf{Q}'$ be a unifier for $\mathbf{Q}'$.
Now, $\mathbf{P}$ is bounded, with bottom $\bot$ let $x\in {\rm min}(\mathbf{Q}')$ such that $x\leq u(\bot)$ then
so that $u(P) \subseteq [x,i(x)]_{Q'}$ via the inclusion (monotone) map $f$,
but then $u_{x} \circ f=u$ so that $u_{x}$ is more general than
$u$.   Thus, $\mathrm{type}_{\K}(\mathbf{Q}') \in \{1,\omega\}$.  We claim that $\mathrm{type}_{\K}(\mathbf{Q}') \neq 1$,
that is, there exist two distinct unifiers for $\mathbf{Q}'$ with no common upper bound in $U_{\K}(\mathbf{Q}')$.  In fact,
$\mathbf{Q}'$ is not bounded, otherwise if $\mathbf{Q}'$ is bounded by  $\bot$ and $\top$,
then $\bot \leq i(\bot)=\top$ and then $[\bot,i(\bot)]_{Q'}=Q'$ satisfies $(K_1)$.
Hence, there are two distinct minimal points in $\mathbf{Q}'$, $x_1 \neq x_2$, so that $u_{x_1} \neq u_{x_2}$ are two distinct maximals in $U_{\K}(\mathbf{Q}')$.
Finally by Lemma \ref{Lem:UK}$(ii)$, $\mathrm{type}_{\K}(\mathbf{Q})=\mathrm{type}_{\K}(\mathbf{Q}')=\omega$.

Suppose now that there exists $x \leq i(x)$ in $Q'$  such that $[x,i(x)]_{Q'}$ does not satisfy $(K_1)$.
Then $\{z\in Q' \mid x\leq' z\leq' i(z)\}$ with restricted order is not a meet semilattice,
that is, there exist $x,a,b,c,d,y,z \in Q'$ such that $x \leq a,b \leq c,d$, $c \leq y=i(y)$, $d \leq z=i(z)$ but 
there does not exist $e\in Q'$ such that $a,b \leq e \leq c,d$.  By Lemma~\ref{lemma:unifkcase1}, $\mathrm{type}_{\K}(\mathbf{Q}')=0$.  
Thus, by Lemma \ref{Lem:UK}$(ii)$, $\mathrm{type}_{\K}(\mathbf{Q})=0$.

Finally suppose that for all $x \leq i(x)$ in $\mathbf{Q}'$ the interval $[x,i(x)]_{Q'}$ satisfies $(K_1)$, 
but there exists $x \leq i(x)$ in $\mathbf{Q}'$ such that $[x,i(x)]_{Q'}$ does not satisfy $(M_3)$; 
this case includes the case where $\mathbf{Q}'$ satisfies $(K_1)$ and does not satisfy $(M_3)$. 
Then there exist $x,a,b,c,d,e,f,y,z,w \in Q'$ such that: $x \leq a,b,c$; 
$a \leq d,e$; $b \leq d,f$; $c \leq e,f$; $d \leq y=i(y)$; $e \leq z=i(z)$; $f \leq w=i(w)$; 
there does not exist $g \in Q'$ such that $a,b,c \leq g \leq i(g)$.  By Lemma~\ref{lemma:unifkcase2}, $\mathrm{type}_{\K}(\mathbf{Q}')=0$.  
Finally by Lemma \ref{Lem:UK}$(ii)$, $\mathrm{type}_{\K}(\mathbf{Q})=0$.
\end{proof}

Using Lemma~\ref{lemma:unifkcase1} and Lemma~\ref{lemma:unifkcase2}, 
it is easy to construct examples of Kleene unification problems having nullary type, 
which proves that the variety of Kleene algebras has nullary equational unification type. 

\subsection{De Morgan Algebras}\label{subsect:unifdm}

We provide a complete classification of solvable instances of the unification problem
over De Morgan algebras (Theorem \ref{Theo:UnifClassDeMorgan}).
Using \cite{G97}, Theorem~\ref{th:dualities}, and Theorem~\ref{th:mmain}, 
the problem reduces to the following:

\begin{description}
\item[Problem]  $\textsc{Unif}(\M)$.
\item[Instance]  $\mathbf{Q}=(Q,\leq,i) \in \FPM$.
\item[Solution]  A morphism $u \colon \mathbf{P} \to \mathbf{Q}$ in $\FPM$,
where $\mathbf{P}$ satisfies $(M_1)$-$(M_3)$.
\end{description}

 This section follows a similar structure than the previous on. We first identify three sufficient conditions for an instance of the unification problem
to have nullary type (Lemma~\ref{lemma:unifMcase1}, Lemma~\ref{lemma:unifMcase2}, and Lemma~\ref{lemma:unifMcase3}),
and then we prove that the identified conditions are indeed necessary for nullarity (Theorem~\ref{Theo:UnifClassDeMorgan}).

\begin{remark}
An instance $\mathbf{Q}=(Q,\leq,i)$ of $\textsc{Unif}(\M)$ is solvable iff
$\{ x \in Q \mid x=i(x) \}\neq \emptyset$. The proof follows as in Remark \ref{Rem:KleeneSolvable}.
\end{remark}

Given a solvable instance $\mathbf{Q}$ of $\textsc{Unif}(\M)$, we let $U_{\M}(\mathbf{Q})$ 
denote the preordered set of unifiers of $\mathbf{Q}$, which is defined as in Section~\ref{subsect:unifbdl}.  

\begin{lemma}\label{Lem:UpDeMorgan}
Let $\mathbf{Q}=(Q,\leq,i) \in \FPM$ be an instance of $\textsc{Unif}(\M)$ and $x\in Q$ be a minimal element of $\mathbf{Q}$.
Then
\begin{equation}\label{Eq:UpwardDeMorgan}
V=\{ u \colon \mathbf{P} \rightarrow \mathbf{Q} \in U_{\M}(\mathbf{Q}) \mid x \in u(P)\}
\end{equation}
is a directed upset in $U_{\M}(\mathbf{Q})$.
\end{lemma}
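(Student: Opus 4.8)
The plan is to follow verbatim the strategy of Lemma~\ref{Lem:UpKleene}, replacing the Kleene closure conditions $(K_1)$, $(K_2)$ by the De Morgan condition $(M_1)$. First I would observe that $V$ is an upset of $U_{\M}(\mathbf{Q})$: if $u_2 \in V$ and $u_2 \leq u_1$ via $u_2 = u_1 \circ f$, then $x \in u_2(P_2) \subseteq u_1(P_1)$, so $u_1 \in V$. The content of the lemma is therefore directedness, for which I assume $V \neq \emptyset$ (directedness being vacuous otherwise). Nonemptiness of $V$ forces a $y \in Q$ with $x \leq y = i(y)$, whence $x \leq i(x)$; this inequality will be used repeatedly.

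Given $u_1 \colon \mathbf{R}_1 \to \mathbf{Q}$ and $u_2 \colon \mathbf{R}_2 \to \mathbf{Q}$ in $V$, with $\mathbf{R}_j = (R_j,\leq_j,i_j)$, I would build a common upper bound exactly as in the Kleene case: let $\mathbf{P} = (P,\leq,i)$ be obtained by adjoining a fresh bottom $\bot$ and a fresh top $\top$ to the disjoint union of $\mathbf{R}_1$ and $\mathbf{R}_2$, setting $i(\bot) = \top$, $i(\top) = \bot$, and $i|_{R_j} = i_j$. Define $u \colon P \to Q$ by $u|_{R_j} = u_j$, $u(\bot) = x$, and $u(\top) = i(x)$; the injections $f_j \colon \mathbf{R}_j \to \mathbf{P}$ then satisfy $u \circ f_j = u_j$, so $u$ dominates both $u_1$ and $u_2$ once $u \in V$ is verified.

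Two things must be checked: that $\mathbf{P}$ is an admissible domain (satisfies $(M_1)$, $(M_2)$, $(M_3)$) and that $u$ is a morphism in $\FPM$ with $x \in u(P)$. For $(M_1)$, adjoining a bottom and a top to the disjoint union of the two finite lattices $\mathbf{R}_1$, $\mathbf{R}_2$ yields a lattice: within a component joins and meets are unchanged, and a cross-component pair has join $\top$ and meet $\bot$. Conditions $(M_2)$ and $(M_3)$ are inherited from the components: a fixed point of $i$ exists in each $R_j$ (by $(M_1)$ and $(M_2)$ for $R_j$), which settles the hypothesis of $(M_2)$ at $\bot$; and for $(M_3)$ the key observation is that $s_1 \in R_1$ and $s_2 \in R_2$ have no common upper bound inside $\{p \mid p \leq i(p)\}$ (their only upper bound in $\mathbf{P}$ is $\top$, which does not satisfy $\top \leq i(\top)$), so any set witnessing the $3$-completeness hypothesis lies in a single component $\{\bot\} \cup \{p \in R_j \mid p \leq i_j(p)\}$, where $3$-completeness follows from that of the component. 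That $u$ commutes with $i$ is immediate from $i(\bot) = \top$, $u(\top) = i(x)$, and each $u_j$ being a morphism.

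The main obstacle is monotonicity of $u$, that is, $x = u(\bot) \leq u(p) \leq u(\top) = i(x)$ for every $p \in P$, and this is precisely where minimality of $x$ enters. Since $\mathbf{R}_j$ is a finite lattice it has a bottom $0_j$, and since $u_j \in V$ there is $r_0$ with $u_j(r_0) = x$; then $u_j(0_j) \leq u_j(r_0) = x$ and minimality of $x$ in $\mathbf{Q}$ forces $u_j(0_j) = x$, whence $x \leq u_j(p)$ for all $p \in R_j$. Applying this to $i_j(p)$ and using that $u_j$ commutes with $i$ gives $x \leq i(u_j(p))$, i.e. $u_j(p) \leq i(x)$; together with $x \leq i(x)$ this yields $x \leq u(p) \leq i(x)$ on all of $P$, so $u$ is monotone. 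Finally $x = u(\bot) \in u(P)$, so $u \in V$, completing the proof that $V$ is a directed upset.
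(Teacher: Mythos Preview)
Your proof is correct and follows exactly the approach the paper intends: its proof of Lemma~\ref{Lem:UpDeMorgan} consists of the single line ``Along the lines of Lemma~\ref{Lem:UpKleene},'' and you have carried out precisely that adaptation, replacing the verification of $(K_1)$, $(K_2)$ by that of $(M_1)$. In fact you supply more detail than the paper does even in the Kleene case---in particular the explicit check of $(M_3)$ for the glued object $\mathbf{P}$ and the monotonicity argument for $u$ via minimality of $x$---so your write-up is a faithful and fuller version of the paper's own argument.
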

\begin{proof}
Along the lines of Lemma~\ref{Lem:UpKleene}.
\end{proof}

\begin{lemma}\label{lemma:unifMcase1}
Let $\mathbf{Q}=(Q,\leq,i) \in \FPM$ be an instance of $\textsc{Unif}(\M)$.
If there exist $x,a,b,c,d,y \in Q$ such that:
\begin{itemize}
\item[$(i)$] $x \leq a,b \leq c,d$; 
\item[$(ii)$] $x \leq y=i(y)$;
\item[$(iii)$] there does not exist $e \in Q$ such that $a,b \leq e \leq c,d$;
\end{itemize}
then $\mathrm{type}_{\M}(\mathbf{Q})=0$ (see Figure~\ref{fig:mconf1}).
\begin{figure}
\centering
\begin{picture}(0,0)%
\includegraphics{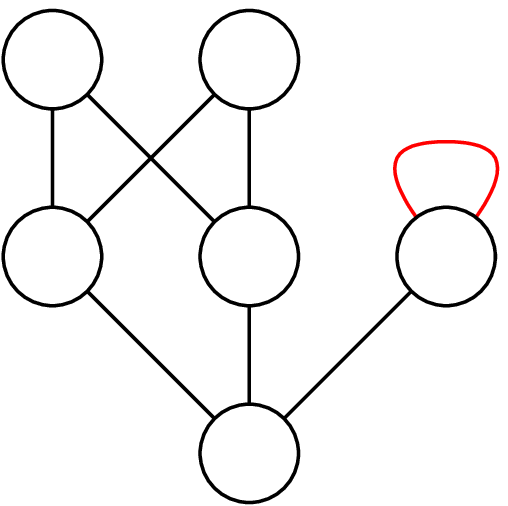}%
\end{picture}%
\setlength{\unitlength}{4144sp}%
\begingroup\makeatletter\ifx\SetFigFont\undefined%
\gdef\SetFigFont#1#2#3#4#5{%
  \reset@font\fontsize{#1}{#2pt}%
  \fontfamily{#3}\fontseries{#4}\fontshape{#5}%
  \selectfont}%
\fi\endgroup%
\begin{picture}(2297,2280)(6286,-5701)
\put(7430,-3714){\makebox(0,0)[b]{\smash{{\SetFigFont{9}{10.8}{\familydefault}{\mddefault}{\updefault}{\color[rgb]{0,0,0}$d$}%
}}}}
\put(6530,-3713){\makebox(0,0)[b]{\smash{{\SetFigFont{9}{10.8}{\familydefault}{\mddefault}{\updefault}{\color[rgb]{0,0,0}$c$}%
}}}}
\put(7434,-4613){\makebox(0,0)[b]{\smash{{\SetFigFont{9}{10.8}{\familydefault}{\mddefault}{\updefault}{\color[rgb]{0,0,0}$b$}%
}}}}
\put(6529,-4604){\makebox(0,0)[b]{\smash{{\SetFigFont{9}{10.8}{\familydefault}{\mddefault}{\updefault}{\color[rgb]{0,0,0}$a$}%
}}}}
\put(7427,-5508){\makebox(0,0)[b]{\smash{{\SetFigFont{9}{10.8}{\familydefault}{\mddefault}{\updefault}{\color[rgb]{0,0,0}$x$}%
}}}}
\put(8327,-4608){\makebox(0,0)[b]{\smash{{\SetFigFont{9}{10.8}{\familydefault}{\mddefault}{\updefault}{\color[rgb]{0,0,0}$y$}%
}}}}
\end{picture}%

\caption{Subposet of $\mathbf{Q}$ in Lemma~\ref{lemma:unifMcase1}.}\label{fig:mconf1}
\end{figure}
\end{lemma}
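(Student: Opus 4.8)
The plan is to mirror the arguments of Lemma~\ref{lem:NullBDL} and Lemma~\ref{lemma:unifkcase1}, now carried out inside the category $\FPM$. Since $\mathbf{Q}$ is finite, I first replace $x$ by a minimal element below it (this preserves conditions $(i)$--$(iii)$), so that $x \in \mathrm{min}(\mathbf{Q})$; condition $(iii)$ then gives $a \neq b$ and $c \neq d$. I set
\[
V = \{\, u \colon \mathbf{P} \to \mathbf{Q} \in U_{\M}(\mathbf{Q}) \mid x \in u(P) \,\},
\]
which by Lemma~\ref{Lem:UpDeMorgan} is a directed upset of $U_{\M}(\mathbf{Q})$; here condition $(ii)$ is exactly what furnishes a fixed point $y = i(y)$ above $x$, and this is what drives directedness (as in Lemma~\ref{Lem:UpKleene}). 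By Lemma~\ref{Lemma:UpsetNullary} it suffices to prove $\mathrm{type}(V) = 0$, and since $V$ is directed, Lemma~\ref{Lemma:DirectedType} reduces this to showing that $V$ has no most general unifier, i.e.\ $\mathrm{type}(V) \neq 1$.

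To rule out $\mathrm{type}(V) = 1$ I would exhibit, for every $n$, a unifier $u_n \colon \mathbf{T}_n \to \mathbf{Q}$ in $V$ whose factorizations are forced to have large domain. The poset $\mathbf{T}_n \in \FPM$ is built from a formal language as in the companion lemmas: an unbarred ``lower'' part carrying the lattice $\bot \prec j \prec j\cdot k$ of Lemma~\ref{lem:NullBDL} (indexed by $j<k$ in $\{1,\dots,n\}$ with $j+k$ odd), its order-reversing $i$-image on barred letters, and a fixed-point layer inserted to close $\mathbf{T}_n$ into a bounded lattice. The unifier is given by $u_n(\bot) = x$, by $u_n(j) = a$ or $b$ according to the parity of $j$, by $u_n(j\cdot k) = c$ or $d$ according to the parity of the smaller index, by sending every fixed point to $y$, and by $u_n(i(t)) = i(u_n(t))$ elsewhere. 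One then checks that $\mathbf{T}_n$ satisfies $(M_1)$--$(M_3)$ and that $u_n$ is a morphism in $\FPM$ lying in $V$.

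Granting this, the counting step is identical in spirit to Lemma~\ref{lem:NullBDL}: if $u_n = u \circ f$ for a unifier $u \colon \mathbf{P} \to \mathbf{Q}$, I claim $f$ is injective on $\{1,\dots,n\}$, forcing $|P| \geq n$. For $j<k$ of different parity this is immediate since $u(f(j)) = a \neq b = u(f(k))$; for $j<k$ of equal parity, pick $l$ of opposite parity with $j<l<k$, so that $f(j)\vee f(l) \leq f(j\cdot l)$ and $f(l)\vee f(k) \leq f(l\cdot k)$ using that $\mathbf{P}$ is a lattice by $(M_1)$. Were $f(j)=f(k)$, then $f(j)\vee f(l) = f(l)\vee f(k)$, and applying $u$ yields $a,b \leq u(f(j)\vee f(l)) \leq c,d$, contradicting $(iii)$. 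As $\mathbf{P}$ is finite, no single unifier can dominate all the $u_n$, so $\mathrm{type}(V)\neq 1$; hence $\mathrm{type}(V)=0$ and, by Lemma~\ref{Lemma:UpsetNullary}, $\mathrm{type}_{\M}(\mathbf{Q})=0$.

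The main obstacle is the construction and verification of $\mathbf{T}_n$ as a genuine object of $\FPM$ meeting $(M_1)$--$(M_3)$. The delicate point is $(M_2)$: it demands a fixed point above every element lying below its own involution, yet $\mathbf{Q}$ supplies only the single fixed point $y$ above $x$, with no fixed point guaranteed above $c$ or $d$. Consequently the involution on $\mathbf{T}_n$ must be arranged so that the active letters $j$ and $j\cdot k$ are \emph{incomparable} to their involutions---only $\bot$ and the fixed-point layer may sit below their involutions---and this must be done while still closing $\mathbf{T}_n$ into a lattice so that $(M_1)$ holds. Getting this involution and fixed-point bookkeeping right, so that $u_n$ simultaneously respects $i$ and lands every fixed point on $y$, is where the real work lies; the lattice-theoretic counting argument above is then routine.
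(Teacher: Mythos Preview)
Your proposal is correct and follows essentially the same route as the paper: reduce to the directed upset $V$, build $\mathbf{T}_n$ by doubling the poset of Lemma~\ref{lem:NullBDL} with an order-reversing involution so that the letters $j$ and $j\cdot k$ are incomparable to their barred copies, adjoin a single fixed point (the paper calls it $0$) sent to $y$, and run the parity counting argument verbatim using $(M_1)$. The one minor correction is that the fixed point is not there to make $\mathbf{T}_n$ a bounded lattice---$\bot$ and $\overline{\bot}$ already do that---but precisely to secure $(M_2)$, since $\bot \leq i(\bot)$ forces a fixed point above $\bot$; with that in place, $\{x \in T_n \mid x \leq i(x)\} = \{\bot,0\}$ is a two-element chain and $(M_2)$, $(M_3)$ are immediate.
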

\begin{proof}
Since $\mathbf{Q}$ is a finite poset, we assume without loss of generality $x \in \mathrm{min}(\mathbf{Q})$.
Notice that by $(iii)$, we have $a\neq b$ and $c\neq d$.  Let,
$$V=\{ u \colon \mathbf{P} \rightarrow \mathbf{Q} \in U_{\M}(\mathbf{Q}) \mid x \in u(P)\}\text{.}$$
By Lemma \ref{Lem:UpDeMorgan}, $V$ is an directed upset in $U_{\M}(\mathbf{Q})$.  
By Lemma \ref{Lemma:UpsetNullary},  to prove that ${\rm type}(U_{\M}(\mathbf{Q}))=0$ it is enough to prove that ${\rm type}(V)=0$.  
Since $V$ is directed, by Lemma \ref{Lemma:DirectedType}, ${\rm type}(V)\in\{0,1\}$.
We show that $\mathrm{type}(V) \neq 1$.  

For every $n \in \mathbb{N}$, we define a unifier $u_n \colon \mathbf{T}_n \to \mathbf{Q}$ in $V$ as follows.
For $\mathbf{T}_n=(T_n,\leq,i) \in \FPK$ we let
$$T_n=\{ \bot,\overline{\bot},0,j,\overline{j},j\cdot k,\overline{j\cdot k} \mid \text{$j<k$ in $\{1,\dots,n\}$ and $j + k$ is odd} \}\text{.}$$
The map $i \colon T_n \to T_n$ is defined by: 
\begin{enumerate}
\item[] $i(0)=0$;
\item[] $i(y)=\overline{y}$ and $i(\overline{y})=y$ for all $y\in T_n\setminus \{0\}$.
\end{enumerate}
The partial order over $T_n$ is defined by the following cover relation, for all $j,k \in \{1,\dots,n\}$:
\begin{enumerate}
\item[] $\bot \prec j$ and $i(j) \prec i(\bot)$;
\item[] $\bot \prec i(j\cdot k)$ and $j\cdot k \prec i(\bot)$ if $j<k$;
\item[] $j \prec j\cdot k$ and $i(j\cdot k) \prec i(j)$ if $j<k$;
\item[] $j \prec k\cdot j$ and $i(k\cdot j) \prec i(j)$ if $k<j$.
\end{enumerate}
It is easy to check that $\mathbf{T}_n$ satisfies $(M_1)$-$(M_3)$.
Figure~\ref{fig:posm1} provides the Hasse diagram of $\mathbf{T}_3$.
\begin{figure}
\centering
\begin{picture}(0,0)%
\includegraphics{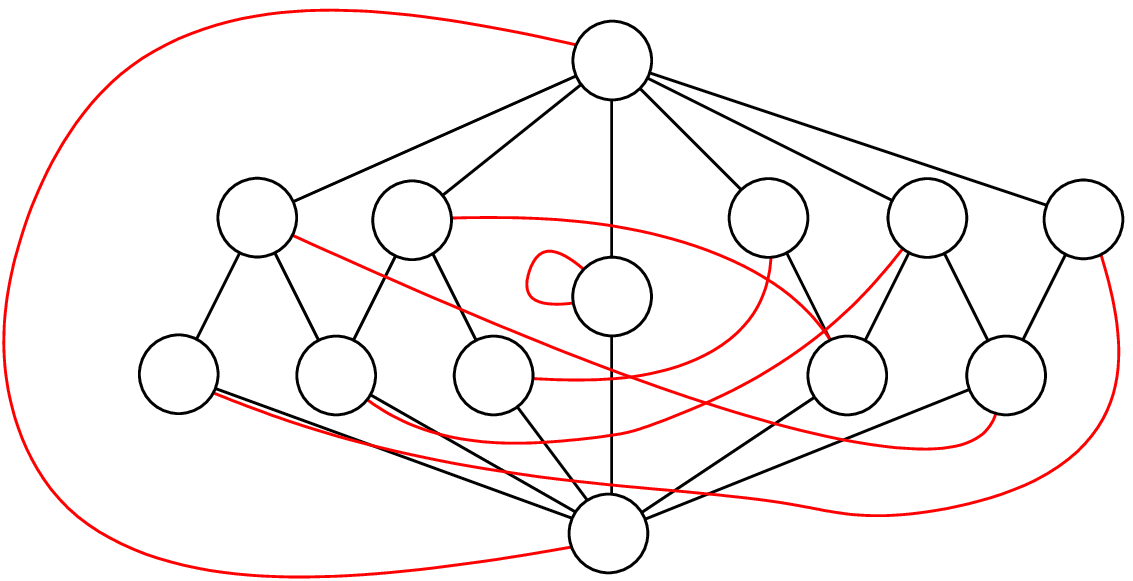}%
\end{picture}%
\setlength{\unitlength}{3315sp}%
\begingroup\makeatletter\ifx\SetFigFont\undefined%
\gdef\SetFigFont#1#2#3#4#5{%
  \reset@font\fontsize{#1}{#2pt}%
  \fontfamily{#3}\fontseries{#4}\fontshape{#5}%
  \selectfont}%
\fi\endgroup%
\begin{picture}(6433,3284)(8655,-5737)
\put(9676,-4606){\makebox(0,0)[b]{\smash{{\SetFigFont{7}{8.4}{\familydefault}{\mddefault}{\updefault}{\color[rgb]{0,0,0}$1$}%
}}}}
\put(10576,-4606){\makebox(0,0)[b]{\smash{{\SetFigFont{7}{8.4}{\familydefault}{\mddefault}{\updefault}{\color[rgb]{0,0,0}$2$}%
}}}}
\put(11476,-4606){\makebox(0,0)[b]{\smash{{\SetFigFont{7}{8.4}{\familydefault}{\mddefault}{\updefault}{\color[rgb]{0,0,0}$3$}%
}}}}
\put(10126,-3706){\makebox(0,0)[b]{\smash{{\SetFigFont{7}{8.4}{\familydefault}{\mddefault}{\updefault}{\color[rgb]{0,0,0}$1 \cdot 2$}%
}}}}
\put(11026,-3706){\makebox(0,0)[b]{\smash{{\SetFigFont{7}{8.4}{\familydefault}{\mddefault}{\updefault}{\color[rgb]{0,0,0}$2 \cdot 3$}%
}}}}
\put(12151,-5506){\makebox(0,0)[b]{\smash{{\SetFigFont{7}{8.4}{\familydefault}{\mddefault}{\updefault}{\color[rgb]{0,0,0}$\bot$}%
}}}}
\put(12151,-4156){\makebox(0,0)[b]{\smash{{\SetFigFont{7}{8.4}{\familydefault}{\mddefault}{\updefault}{\color[rgb]{0,0,0}$0$}%
}}}}
\put(14401,-4606){\makebox(0,0)[b]{\smash{{\SetFigFont{7}{8.4}{\familydefault}{\mddefault}{\updefault}{\color[rgb]{0,0,0}$\overline{1 \cdot 2}$}%
}}}}
\put(13051,-3706){\makebox(0,0)[b]{\smash{{\SetFigFont{7}{8.4}{\familydefault}{\mddefault}{\updefault}{\color[rgb]{0,0,0}$\overline{3}$}%
}}}}
\put(13951,-3706){\makebox(0,0)[b]{\smash{{\SetFigFont{7}{8.4}{\familydefault}{\mddefault}{\updefault}{\color[rgb]{0,0,0}$\overline{2}$}%
}}}}
\put(14806,-3706){\makebox(0,0)[b]{\smash{{\SetFigFont{7}{8.4}{\familydefault}{\mddefault}{\updefault}{\color[rgb]{0,0,0}$\overline{1}$}%
}}}}
\put(13501,-4606){\makebox(0,0)[b]{\smash{{\SetFigFont{7}{8.4}{\familydefault}{\mddefault}{\updefault}{\color[rgb]{0,0,0}$\overline{2 \cdot 3}$}%
}}}}
\put(12151,-2806){\makebox(0,0)[b]{\smash{{\SetFigFont{7}{8.4}{\familydefault}{\mddefault}{\updefault}{\color[rgb]{0,0,0}$\overline{\bot}$}%
}}}}
\end{picture}%

\caption{$\mathbf{T}_3$ in Lemma~\ref{lemma:unifMcase1}.}\label{fig:posm1}
\end{figure}
We define $u_n \colon \mathbf{T}_n \to \mathbf{Q}$ as follows, where $j,k \in \{1,\dots,n\}$: 
\begin{enumerate}
\item[] $u_n(\bot)=x$;
\item[] $u_n(j)=a$ and $u_n(j\cdot k)=c$, for all $j,j\cdot k \in T_n$ with $j$ odd; 
\item[] $u_n(j)=b$ and $u_n(j\cdot k)=d$, for all $j,j\cdot k \in T_n$ with $j$ even;
\item[] $u_n(0)=y$;
\end{enumerate}
and, for all $y \in \{ \bot,0,j,j\cdot k \mid \text{$j<k$ in $\{1,\dots,n\}$ and $j + k$ is odd} \} \subseteq T_n$, 
\begin{enumerate}
\item[] $u_n(i(y))=i(u_n(y))$.
\end{enumerate}
It is easy to check that $u_n \colon \mathbf{T}_n \to \mathbf{Q}$ is a unifier for $\mathbf{Q}$ in $V$.

Let $u \colon \mathbf{P} \to \mathbf{Q}$ be a unifier for $\mathbf{Q}$ such that $u \in V$.  We show that $u_n \leq u$ implies $|P| \geq n$.
Let $u_n=u \circ f$.  We claim that $f(j) \neq f(k)$ for all $j<k$ with $j,k \in \{1,\dots,n\}$.
If $j$ and $k$ have a different parity, then it is clear.   
If $j$ and $k$ have the same parity, 
without loss of generality assume that both are odd, then let $l$ be even such that $j<l<k$.  
Since by construction $j,l \leq j\cdot l$,
we have $f(j),f(l) \leq f(j\cdot l)$.  By $(M_1)$,
$$f(j),f(l) \leq f(j) \vee f(l) \leq f(j\cdot l)\text{.}$$
Similarly,
$$f(l),f(k) \leq f(l) \vee f(k) \leq f(l\cdot k)\text{.}$$
Assume $f(j)=f(k)$ for a contradition.  Then
$$f(j)=f(k),f(l) \leq f(j) \vee f(l)=f(l) \vee f(k) \leq f(j\cdot l),f(l\cdot k)\text{,}$$
and applying $u$ through recalling that $u_n=u \circ f$,
$$a,b \leq u(f(l) \vee f(k)) \leq c,d\text{,}$$
contradicting clause $(iii)$ in the statement. 

This proves that $\mathrm{type}(V)\neq 1$.  Then $\mathrm{type}(V)=0$.  
Therefore, $\mathrm{type}_{\M}(\mathbf{Q})=0$, as desired.
\end{proof}

\begin{lemma}\label{lemma:unifMcase2}
Let $\mathbf{Q}=(Q,\leq,i) \in \FPM$ be an instance of $\textsc{Unif}(\M)$.
If there exist $x,a,b \in Q$ such that:
\begin{itemize}
\item[$(i)$] $x \leq a,b$;
\item[$(ii)$] $a \leq i(a)$; $b=i(b)$;
\item[$(iii)$] there does not exist $c \in Q$ such that $a \leq c=i(c)$;
\end{itemize}
then $\mathrm{type}_{\M}(\mathbf{Q})=0$ (see Figure~\ref{fig:mconf2}).
\begin{figure}
\centering
\begin{picture}(0,0)%
\includegraphics{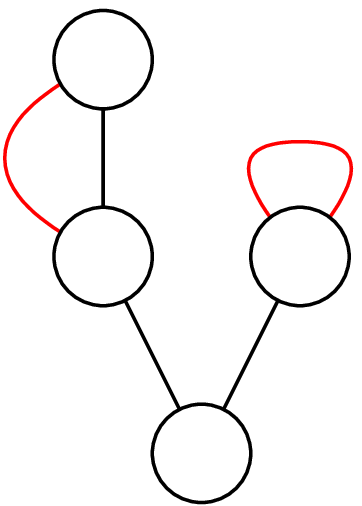}%
\end{picture}%
\setlength{\unitlength}{4144sp}%
\begingroup\makeatletter\ifx\SetFigFont\undefined%
\gdef\SetFigFont#1#2#3#4#5{%
  \reset@font\fontsize{#1}{#2pt}%
  \fontfamily{#3}\fontseries{#4}\fontshape{#5}%
  \selectfont}%
\fi\endgroup%
\begin{picture}(1629,2280)(6954,-5701)
\put(7430,-3714){\makebox(0,0)[b]{\smash{{\SetFigFont{9}{10.8}{\familydefault}{\mddefault}{\updefault}{\color[rgb]{0,0,0}$i(a)$}%
}}}}
\put(7434,-4613){\makebox(0,0)[b]{\smash{{\SetFigFont{9}{10.8}{\familydefault}{\mddefault}{\updefault}{\color[rgb]{0,0,0}$a$}%
}}}}
\put(8327,-4608){\makebox(0,0)[b]{\smash{{\SetFigFont{9}{10.8}{\familydefault}{\mddefault}{\updefault}{\color[rgb]{0,0,0}$b$}%
}}}}
\put(7877,-5508){\makebox(0,0)[b]{\smash{{\SetFigFont{9}{10.8}{\familydefault}{\mddefault}{\updefault}{\color[rgb]{0,0,0}$x$}%
}}}}
\end{picture}%

\caption{Subposet of $\mathbf{Q}$ in Lemma~\ref{lemma:unifMcase2}.}\label{fig:mconf2}
\end{figure}
\end{lemma}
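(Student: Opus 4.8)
The plan is to follow the template of the previous nullarity lemmas. Assuming without loss of generality that $x \in \mathrm{min}(\mathbf{Q})$, set
\[
V=\{ u \colon \mathbf{P} \rightarrow \mathbf{Q} \in U_{\M}(\mathbf{Q}) \mid x \in u(P)\}.
\]
By Lemma~\ref{Lem:UpDeMorgan}, $V$ is a directed upset of $U_{\M}(\mathbf{Q})$, so by Lemma~\ref{Lemma:DirectedType} we have $\mathrm{type}(V) \in \{0,1\}$, and by Lemma~\ref{Lemma:UpsetNullary} it suffices to prove $\mathrm{type}(V)=0$, i.e.\ that $\mathrm{type}(V) \neq 1$. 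As in the other cases, I would exhibit for each $n$ a unifier $u_n \colon \mathbf{T}_n \to \mathbf{Q}$ in $V$ such that any unifier $u \colon \mathbf{P} \to \mathbf{Q}$ with $u_n \leq u$ forces $|P| \geq n$; since a most general unifier would be a single finite object, this rules out $\mathrm{type}(V)=1$.

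The point is that the obstruction here is the failure of $(M_2)$ at $a$ (a self-dual element lies nowhere above $a$, although $a \leq i(a)$), rather than a lattice failure as in Lemma~\ref{lemma:unifMcase1}. Accordingly, I would build $\mathbf{T}_n$ around $n$ independent ``copies'' of $a$. Concretely, $\mathbf{T}_n$ has a bottom $\bot$ (with $u_n(\bot)=x$) and top $i(\bot)$ (so $u_n(i(\bot))=i(x)$, which lies above $a$ and $b$ since $x \leq a,b$); atoms $p_1,\dots,p_n$ with $u_n(p_j)=a$ and $u_n(i(p_j))=i(a)$; and the join/meet elements $\bigvee_{j\in S}p_j$ and their duals, added for all $S$, needed to close $\{p_1,\dots,p_n,i(p_1),\dots,i(p_n)\}$ into a bounded lattice, all mapped into the interval $[a,i(a)]$ of $\mathbf{Q}$. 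The crucial design constraints are
\[
p_j \leq i(p_k) \ (j \neq k), \qquad p_j \parallel i(p_j),
\]
the first of which is consistent with monotonicity precisely because $a \leq i(a)$. Finally, to repair $(M_2)$ for the single element $\bot \leq i(\bot)$, I would adjoin one thin self-dual element $0$ with $\bot < 0 < i(\bot)$, incomparable to everything else, and set $u_n(0)=b$; this is legitimate exactly because $b=i(b)$ and $x \leq b$. By construction $\{t \in T_n \mid t \leq i(t)\}=\{\bot,0\}$, a two-element chain, so $(M_3)$ is immediate and $(M_2)$ holds with witness $0$; in particular no self-dual element of $\mathbf{T}_n$ sits above any $p_j$, which is what allows $u_n$ to land in $\mathbf{Q}$ despite $(iii)$.

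The engine of the argument is then the following. Suppose $u_n=u\circ f$ with $f\colon \mathbf{T}_n \to \mathbf{P}$ a morphism in $\FPM$, and suppose for contradiction that $f(p_j)=f(p_k)$ for some $j \neq k$. Since $p_j \leq i(p_k)$ and $f$ commutes with the involution, $f(p_j) \leq f(i(p_k))=i(f(p_k))=i(f(p_j))$, so $f(p_j) \leq i(f(p_j))$. As $\mathbf{P}$ satisfies $(M_2)$, there is $s \in P$ with $f(p_j) \leq s=i(s)$; applying $u$ gives $a=u(f(p_j)) \leq u(s)=i(u(s))$, a self-dual element of $\mathbf{Q}$ above $a$, contradicting $(iii)$. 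Hence $f$ is injective on $\{p_1,\dots,p_n\}$, whence $|P|\geq n$. This yields $\mathrm{type}(V)\neq 1$, so $\mathrm{type}(V)=0$ and therefore $\mathrm{type}_{\M}(\mathbf{Q})=0$.

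I expect the main obstacle to be purely the bookkeeping of the construction: verifying that $\mathbf{T}_n$ (with all the joins $\bigvee_{j\in S}p_j$, their duals, and the thin element $0$) really is a lattice lying in $\FPM$, that the relations $p_j \parallel i(p_j)$ survive the transitive closure, and that $u_n$ is a well-defined monotone, involution-preserving map with $x$ in its image. None of this is deep, but—as in Lemma~\ref{lemma:unifMcase1}—it is where the care is needed; the self-dual-element/$(M_2)$ mechanism above is the only genuinely new ingredient.
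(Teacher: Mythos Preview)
Your approach is essentially the paper's own proof: the paper realizes your abstract $\mathbf{T}_n$ concretely as the Boolean cube $\{0,1\}^n$ with bitwise complement as $i$ (so your atoms $p_j$ are the weight-$1$ vectors and $i(p_j)=\bigvee_{k\neq j}p_k$), adjoins a single self-dual element $d$ between $(0,\dots,0)$ and $(1,\dots,1)$ mapped to $b$, and---taking $n$ odd so there is no middle layer---sends vectors of weight $<n/2$ to $a$ and weight $>n/2$ to $i(a)$; the injectivity argument on the atoms via $(M_2)$ and condition $(iii)$ is exactly your ``engine.'' The only point to watch in your bookkeeping is this parity constraint on $n$, which is precisely what makes the map $u_n$ monotone and involution-preserving on the intermediate joins.
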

\begin{proof}
Since $\mathbf{Q}$ is a finite poset, we assume without loss of generality $x \in \mathrm{min}(\mathbf{Q})$.  
Let,
$$V=\{ u \colon \mathbf{P} \rightarrow \mathbf{Q} \in U_{\M}(\mathbf{Q}) \mid x \in u(P)\}\text{.}$$
By Lemma \ref{Lem:UpDeMorgan}, $V$ is an directed upset in $U_{\M}(\mathbf{Q})$.  
By Lemma \ref{Lemma:UpsetNullary},  to prove that ${\rm type}(U_{\K}(\mathbf{Q}))=0$ it is enough to prove that ${\rm type}(V)=0$.  
Since $V$ is directed, by Lemma \ref{Lemma:DirectedType}, ${\rm type}(V)\in\{0,1\}$.
We show that $\mathrm{type}(V) \neq 1$.  

For every odd $n \in \mathbb{N}$, let
\begin{align*}
T_n &= \{ 0,1\}^n \cup \{d\}\text{.}
\end{align*}
The map $i \colon T_n \to T_n$ is defined by: 
\begin{enumerate}
\item[] $i(d)=d$;
\item[] $i(e_1,\ldots, e_n)=(f_1,\ldots, f_n)$ where $f_j=0$ iff $e_j=1$ for $j=1,2,\dots,n$.
\end{enumerate}
The partial order over $T_n$ is defined by: 
\begin{enumerate}
\item[] $(e_1,\ldots, e_n)\leq(f_1,\ldots, f_n)$ if $e_j \leq f_j$ for $j=1,2,\dots,n$; 
\item[] $(0,\ldots,0)\leq d\leq (1,\ldots,1)$.
\end{enumerate}
It is easy to check that $\mathbf{T}_n$ satisfies $(M_1)$, $(M_2)$, and $(M_3)$.
Figure~\ref{fig:posdmp2} provides the Hasse diagram of $\mathbf{T}_3$.
\begin{figure}
\centering
\begin{picture}(0,0)%
\includegraphics{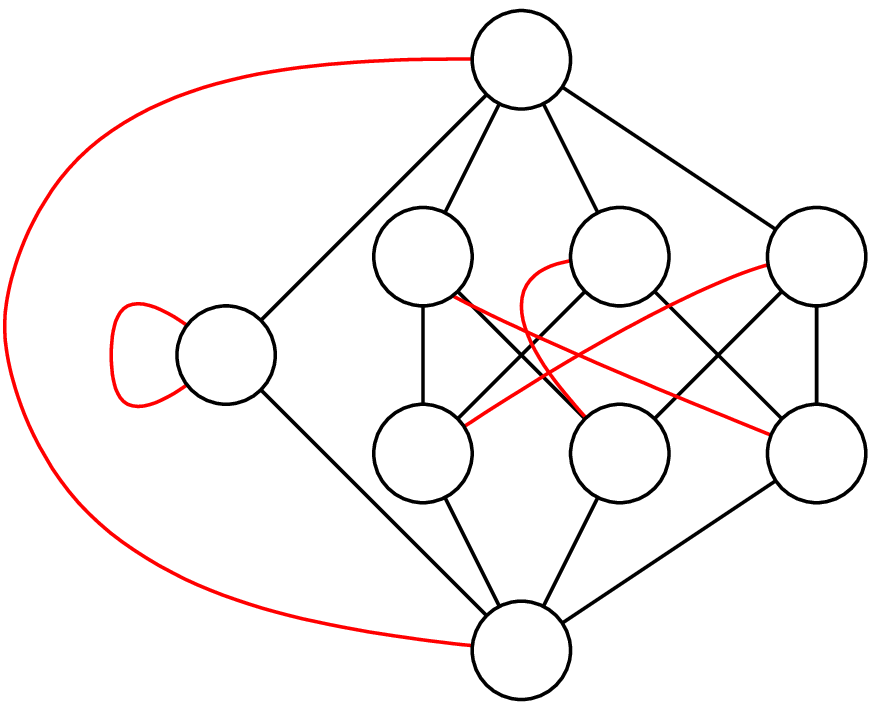}%
\end{picture}%
\setlength{\unitlength}{4144sp}%
\begingroup\makeatletter\ifx\SetFigFont\undefined%
\gdef\SetFigFont#1#2#3#4#5{%
  \reset@font\fontsize{#1}{#2pt}%
  \fontfamily{#3}\fontseries{#4}\fontshape{#5}%
  \selectfont}%
\fi\endgroup%
\begin{picture}(3974,3180)(4592,-5701)
\put(8326,-3661){\makebox(0,0)[b]{\smash{{\SetFigFont{6}{7.2}{\familydefault}{\mddefault}{\updefault}{\color[rgb]{0,0,0}$(1,1,0)$}%
}}}}
\put(7426,-3661){\makebox(0,0)[b]{\smash{{\SetFigFont{6}{7.2}{\familydefault}{\mddefault}{\updefault}{\color[rgb]{0,0,0}$(1,0,1)$}%
}}}}
\put(6526,-3661){\makebox(0,0)[b]{\smash{{\SetFigFont{6}{7.2}{\familydefault}{\mddefault}{\updefault}{\color[rgb]{0,0,0}$(0,1,1)$}%
}}}}
\put(6976,-2761){\makebox(0,0)[b]{\smash{{\SetFigFont{6}{7.2}{\familydefault}{\mddefault}{\updefault}{\color[rgb]{0,0,0}$(1,1,1)$}%
}}}}
\put(6526,-4561){\makebox(0,0)[b]{\smash{{\SetFigFont{6}{7.2}{\familydefault}{\mddefault}{\updefault}{\color[rgb]{0,0,0}$(0,0,1)$}%
}}}}
\put(7426,-4561){\makebox(0,0)[b]{\smash{{\SetFigFont{6}{7.2}{\familydefault}{\mddefault}{\updefault}{\color[rgb]{0,0,0}$(0,1,0)$}%
}}}}
\put(8326,-4561){\makebox(0,0)[b]{\smash{{\SetFigFont{6}{7.2}{\familydefault}{\mddefault}{\updefault}{\color[rgb]{0,0,0}$(1,0,0)$}%
}}}}
\put(6976,-5461){\makebox(0,0)[b]{\smash{{\SetFigFont{6}{7.2}{\familydefault}{\mddefault}{\updefault}{\color[rgb]{0,0,0}$(0,0,0)$}%
}}}}
\put(5626,-4111){\makebox(0,0)[b]{\smash{{\SetFigFont{6}{7.2}{\familydefault}{\mddefault}{\updefault}{\color[rgb]{0,0,0}$d$}%
}}}}
\end{picture}%

\caption{$\mathbf{T}_3$ in Lemma~\ref{lemma:unifMcase2}.}\label{fig:posdmp2}
\end{figure}

For each $n$ odd we define $u_n \colon \mathbf{T}_n \to \mathbf{Q}$ by: 
\begin{enumerate}
\item[] $u_n(0,\ldots,0)=x$ and $u_n(1,\ldots,1)=i(x)$;
\item[] $u_n(d)=b$;
\item[] $u_n(e_1,\ldots, e_n)=a$ if $1\leq e_1+\cdots+e_n < n/2$; 
\item[] $u_n(e_1,\ldots, e_n)=i(a)$ if $n/2< e_1 +\ldots + e_n\leq n-1$.
\end{enumerate}
It follows straightforwardly that $u_n \colon \mathbf{T}_n \to \mathbf{Q}$ is morphism in $\FPM$, and therefore a unifier for $\mathbf{Q}$ in $V$.

Let $u \colon \mathbf{P} \to \mathbf{Q}$ be a unifier for $\mathbf{Q}$ such that $u \in V$.  
We show that $u_n \leq u$ implies $|P| \geq n$.  Let $u_n=u \circ h$.  
We claim that $h(e) \neq h(f)$ for all $e \neq f$ in $T_n$ such that $e_1+\cdots+e_n =f_1+\cdots+f_n =1$.  Suppose 
for a contradiction that $h(e)=h(f)$.
By construction,  $e \leq i(f)$, 
therefore $$h(e) \leq h(i(f))=i(h(f))=i(h(e))\text{;}$$ 
since $\mathbf{P}$ satisfies $(M_2)$, there exists $z \in P$ 
such that $$h(e) \leq z=i(z) \leq i(h(e))\text{;}$$ 
but applying $u$:
$$a \leq u(z)=i(u(z)) \leq i(a)\text{,}$$
in clear contradiction with clause $(iii)$ in the statement. 

This proves that $\mathrm{type}(V)\neq 1$.  Then $\mathrm{type}(V)=0$.  
Therefore, $\mathrm{type}_{\M}(\mathbf{Q})=0$, as desired.
\end{proof}

\begin{lemma}\label{lemma:unifMcase3}
Let $\mathbf{Q}=(Q,\leq,i) \in \FPM$ be an instance of $\textsc{Unif}(\M)$.
If there exist $x,a,b,c,d,e,f,y,z,w \in Q$ such that:
\begin{itemize}
\item[$(i)$] $x\leq a,b,c$; $a\leq d,e$; $b\leq d,f$; $c\leq e,f$; 
\item[$(ii)$] $d\leq y=i(y)$; $e\leq z=i(z)$; $f\leq w=i(w)$; 
\item[$(iii)$] there does not exist $g\in Q$ such that $a,b,c \leq g\leq i(g)$;
\end{itemize}
then $\mathrm{type}_{\M}(\mathbf{Q})=0$ (see Figure~\ref{fig:kconf2}).
\end{lemma}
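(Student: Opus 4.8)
The plan is to follow the proof of Lemma~\ref{lemma:unifkcase2} almost verbatim, exploiting that the forbidden configuration is the very same one depicted in Figure~\ref{fig:kconf2} and that the family of unifiers constructed there is built over a source that already satisfies $(M_1)$--$(M_3)$. First I would assume without loss of generality that $x \in \mathrm{min}(\mathbf{Q})$, which is harmless since $\mathbf{Q}$ is finite, and record that hypotheses $(i)$--$(iii)$ force $a,b,c$ to be pairwise distinct and $d,e,f$ to be pairwise distinct: e.g.\ if $a=b$ then $a,c \leq e \leq z=i(z)$ gives $a,b,c \leq z=i(z)$, contradicting $(iii)$ with $g=z$, and every other coincidence is ruled out in the same way using the fixpoints $y,z,w$ of $(ii)$. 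Setting
$$V=\{ u \colon \mathbf{P} \rightarrow \mathbf{Q} \in U_{\M}(\mathbf{Q}) \mid x \in u(P)\},$$
Lemma~\ref{Lem:UpDeMorgan} shows $V$ is a directed upset of $U_{\M}(\mathbf{Q})$, so Lemma~\ref{Lemma:DirectedType} gives $\mathrm{type}(V) \in \{0,1\}$, and by Lemma~\ref{Lemma:UpsetNullary} it suffices to establish $\mathrm{type}(V) \neq 1$.

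For the latter I would reuse the involutive poset $\mathbf{T}_n$ and the morphism $u_n \colon \mathbf{T}_n \to \mathbf{Q}$ defined in Lemma~\ref{lemma:unifkcase2}: since $\mathbf{T}_n$ satisfies $(M_1)$--$(M_3)$ it is a legitimate source for an instance of $\textsc{Unif}(\M)$, and $u_n$ is a unifier lying in $V$ for each $n$. The core of the argument is to show that $u_n \leq u$, witnessed by $u_n=u\circ h$ with $\mathbf{P}=(P,\leq,i)$, forces $h(j)\neq h(k)$ for all $j<k$ in $\{1,\dots,n\}$, whence $|P|\geq n$ and no finite most general unifier can exist. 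Fixing $j<k$, the cover relations of $\mathbf{T}_n$ yield $j,j\cdot k \leq j\circ j\cdot k \leq i(j\circ j\cdot k)$ and analogous chains for the pairs $(j\cdot k,k\cdot j)$ and $(k\cdot j,k)$; applying $h$ and using that $\mathbf{P}$ is a lattice by $(M_1)$ (so the binary joins exist outright, with no need to invoke $(K_1)$ as in the Kleene case), I obtain that each of $h(j)\vee h(j\cdot k)$, $h(j\cdot k)\vee h(k\cdot j)$, $h(k\cdot j)\vee h(k)$ lies below its own $i$-image.

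The main obstacle, exactly as in Lemma~\ref{lemma:unifkcase2}, is the contradiction step. Assuming $h(j)=h(k)$, the three elements $h(j),h(j\cdot k),h(k\cdot j)$ have all three pairwise joins fixed below $i$ --- the pair $h(j)\vee h(k\cdot j)$ being obtained from $h(k\cdot j)\vee h(k)$ after the identification $h(j)=h(k)$ --- so the first-order reformulation of $(M_3)$ recorded after Theorem~\ref{th:mmain} gives
$$h(j)\vee h(j\cdot k)\vee h(k\cdot j) \leq i\bigl(h(j)\vee h(j\cdot k)\vee h(k\cdot j)\bigr).$$
Writing $g=u\bigl(h(j)\vee h(j\cdot k)\vee h(k\cdot j)\bigr)$ and pushing both inequalities through the monotone, $i$-commuting map $u$, together with $u(h(j))=a$, $u(h(j\cdot k))=b$, $u(h(k\cdot j))=c$, yields $a,b,c \leq g \leq i(g)$, contradicting $(iii)$. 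Hence $\mathrm{type}(V)\neq 1$, so $\mathrm{type}(V)=0$, and therefore $\mathrm{type}_{\M}(\mathbf{Q})=0$ by Lemma~\ref{Lemma:UpsetNullary}.
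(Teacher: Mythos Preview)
Your proposal is correct and follows exactly the same approach as the paper's proof, which simply observes that conditions $(i)$--$(iii)$ coincide with those of Lemma~\ref{lemma:unifkcase2} and that the source $\mathbf{T}_n$ constructed there already satisfies $(M_1)$--$(M_3)$, so the entire argument of Lemma~\ref{lemma:unifkcase2} transfers verbatim to the De~Morgan setting. Your additional remark that $(M_1)$ makes the binary joins available directly (bypassing the appeal to $(K_1)$ needed in the Kleene case) is accurate and a nice clarification.
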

\begin{proof}
Observe that conditions $(i)$-$(iii)$ above are exactly conditions $(i)$-$(iii)$ in Lemma \ref{lemma:unifkcase2}.  
Moreover, as observed in Lemma \ref{lemma:unifkcase2}, the structure $\mathbf{T}_n$ satisfies $(M_1)$-$(M_3)$.  
Therefore, $u_n$ is a unifier for $\mathbf{Q}$ in $\FPM$ for all $n \in \mathbb{N}$, 
and the argument in Lemma~\ref{lemma:unifkcase2} applies.
\end{proof}

\begin{definition}[De Morgan Unification Core]\label{def:dmcore}
Let $\mathbf{Q}=(Q,\leq,i) \in \FPM$.
The \emph{De Morgan unification core} of $\mathbf{Q}$ in $\FPM$
is the structure $\mathbf{Q'}=(Q',\leq',i') \in \FPM$ defined by:
\begin{enumerate}
\item[$(i)$] $Q'=\{x,i(x)\in Q \mid \text{$y\leq z,x,i(x)$ for some $y,z\in Q$ such that $z=i(z)$}\}$;
\item[$(ii)$] $x \leq' y$  iff $x \leq y$;
\item[$(iii)$] $i'(x)=i(x)$ for all $x \in Q'$.
\end{enumerate}
\end{definition}

\begin{lemma}\label{lemma:dmcore}
Let $\mathbf{Q'}=(Q',\leq',i')$ be the De Morgan unification core of $\mathbf{Q}=(Q,\leq,i)$.  Then:
\begin{enumerate}
\item[$(i)$] If $u \colon \mathbf{P} \to \mathbf{Q}$ is a unifier for $\mathbf{Q}$,
then $u(P) \subseteq Q'$ and $u\colon \mathbf{P}\rightarrow \mathbf{Q'}$ is a unifier for $\mathbf{Q'}$.
\item[$(ii)$] $U_{\M}(\mathbf{Q}) \simeq U_{\M}(\mathbf{Q}')$.
\end{enumerate}
\end{lemma}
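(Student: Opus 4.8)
The plan is to follow the template of Lemma~\ref{Lem:UK}, noting that the De Morgan unification core is in fact easier to handle than its Kleene counterpart: by Definition~\ref{def:dmcore}$(ii)$ the order $\leq'$ is simply the order $\leq$ restricted to $Q'$, so once I know that a unifier lands inside $Q'$, its monotonicity and its commutation with the involution as a map into $\mathbf{Q'}$ are immediate. Consequently the only substantive point in part $(i)$ is the containment $u(P)\subseteq Q'$; part $(ii)$ will then be a formal consequence, exactly as in Lemma~\ref{Lem:UK}$(ii)$. I first record that $\mathbf{Q'}\in\FPM$: the defining condition is symmetric in $x$ and $i(x)$, so $Q'$ is closed under $i$, whence $(Q',\leq',i')$ is a subobject of $\mathbf{Q}$ and hence lies in $\FPM$.

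For the containment, let $u\colon\mathbf{P}\to\mathbf{Q}$ be a unifier, with $\mathbf{P}=(P,\leq_P,i_P)$ satisfying $(M_1)$--$(M_3)$, and fix $p\in P$; I must exhibit $y,z\in Q$ with $z=i(z)$ and $y\leq z,u(p),i(u(p))$. Here $(M_1)$ enters: since $\mathbf{P}$ is a lattice I may form $m=p\wedge_P i_P(p)$, and since $i_P$ is an order-reversing involution on $(P,\leq_P)$, hence a dual automorphism, I get $i_P(m)=i_P(p)\vee_P p\geq_P p\geq_P m$, so $m\leq_P i_P(m)$. Now $(M_2)$ supplies $w\in P$ with $m\leq_P w=i_P(w)$. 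Pushing everything through $u$ and using that $u$ commutes with the involutions, I set $y=u(m)$ and $z=u(w)$; then $z=u(i_P(w))=i(u(w))=i(z)$, while monotonicity yields $y\leq u(p)$, $y\leq u(i_P(p))=i(u(p))$, and $y\leq z$. This is precisely the defining condition, so $u(p)\in Q'$.

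With $u(P)\subseteq Q'$ in hand, the corestriction $u\colon\mathbf{P}\to\mathbf{Q'}$ is monotone (as $\leq'$ is inherited) and satisfies $u(i_P(x))=i(u(x))=i'(u(x))$ (as $i'$ is inherited and $i(u(x))\in Q'$ by the containment applied to $i_P(x)$); since $\mathbf{P}$ already satisfies $(M_1)$--$(M_3)$, this corestriction is a unifier for $\mathbf{Q'}$, proving $(i)$. For $(ii)$, I argue as in Lemma~\ref{Lem:UK}$(ii)$: the inclusion $\iota\colon\mathbf{Q'}\to\mathbf{Q}$ is a morphism in $\FPM$, so post-composition with $\iota$ and the corestriction of part $(i)$ are mutually inverse assignments between the unifiers of $\mathbf{Q'}$ and those of $\mathbf{Q}$. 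Because these assignments leave the underlying map unchanged, they preserve and reflect factorizations $u_2=u_1\circ f$, hence the generality preorder, giving $U_{\M}(\mathbf{Q})\simeq U_{\M}(\mathbf{Q'})$. I expect the only place demanding care to be the containment $u(P)\subseteq Q'$---specifically, pinning down that the combination of $(M_1)$ (to form $p\wedge_P i_P(p)$) and $(M_2)$ (to raise it to a fixpoint) produces the witnessing $z$; everything downstream is bookkeeping.
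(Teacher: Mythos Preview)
Your proof is correct and follows essentially the same approach as the paper. The only cosmetic difference is that the paper splits into the cases $x\leq_P i_P(x)$ and $x\parallel_P i_P(x)$, whereas you handle both at once by forming $m=p\wedge_P i_P(p)$ from the start; since the first case is the degenerate instance $m=p$ of your argument, this is a harmless streamlining rather than a different idea.
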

\begin{proof}
$(i)$ We claim that $u(P) \subseteq Q'$.  Indeed, let $x \in P$.  If $x \leq_P i_P(x)$,
then by $(M_2)$ there exists $z \in P$ such that $z=i_P(z)$ and $x \leq_P z$.
Then $u(x) \leq u(z)=i(u(z)) \leq i(u(x))$, so that $u(x) \in Q'$ by Definition~\ref{def:dmcore}$(i)$.
If $x \parallel_P i_P(x)$, then by $(M_1)$, there exists $x\wedge i(x)$ and it satisfies 
$x\wedge i(x)\leq i(x\wedge i(x))$. By $(M_2)$, there exists $z \in P$ such that $z=i_P(z)$
and $x\wedge i(x) \leq_P x,i_P(x),z$.  Then $u(x\wedge i(x)) \leq u(x),i(u(x)),u(z)$,
so that $u(x) \in Q'$ by Definition~\ref{def:dmcore}$(i)$.

$(ii)$ It follows from part $(i)$ and the fact that the inclusion $Q' \subseteq Q$ is in $\FPM$. 
\end{proof}

\begin{theorem}\label{Theo:UnifClassDeMorgan}
Let $\mathbf{Q}=(Q,\leq,i) \in \FPM$ be a solvable instance of $\textsc{Unif}(\M)$,
and $\mathbf{Q'}=(Q',\leq',i') \in \FPM$ be the De Morgan unification core of $\mathbf{Q}$.  Then:
\begin{align*}
\mathrm{type}_{\M}(\mathbf{Q}) &=
\begin{cases}
1\text{,} & \text{iff $\mathbf{Q'}$ satisfies $(M_1)$, $(M_2)$, and $(M_3)$}\\
\omega\text{,} & \text{iff $\mathbf{Q}'$ does not satisfy $(M_1)$, but for every $x \in Q'$} \\
               & \text{ with $x \leq' i(x)$, $[x,i(x)]_{Q'}$ satisfies $(M_1)$, $(M_2)$, and $(M_3)$;}\\
0\text{,} & \text{otherwise.}
\end{cases}
\end{align*}
\end{theorem}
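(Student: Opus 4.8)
The plan is to reduce everything to the core and then mirror the proof of Theorem~\ref{Theo:UnifClassKleene}. By Lemma~\ref{lemma:dmcore}$(ii)$ we have $\mathrm{type}_{\M}(\mathbf{Q})=\mathrm{type}_{\M}(\mathbf{Q}')$, so it suffices to compute $\mathrm{type}_{\M}(\mathbf{Q}')$. First I would record two structural facts about the core that are used throughout. (F1)~Every fixed point $z=i(z)$ of $\mathbf{Q}$ lies in $Q'$ (take $y=z$ in Definition~\ref{def:dmcore}$(i)$); hence ``no fixed point above $a$ in $Q'$'' and ``no fixed point above $a$ in $Q$'' coincide. (F2)~Every minimal element $x$ of $\mathbf{Q}'$ satisfies $x\le i(x)$: from $x\in Q'$ there are $y\le z,x,i(x)$ with $z=i(z)$; since $y\le z=i(z)$ forces $y\le i(y)$, the pair $(y,z)$ witnesses $y\in Q'$, so if $x\parallel i(x)$ then $y<x$, contradicting minimality, while $i(x)<x$ is impossible for minimal $x$. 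Consequently each interval $[x,i(x)]_{Q'}$ with $x\le i(x)$ is a nonempty subobject of $\mathbf{Q}'$ in $\FPM$, closed under $i$.

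For the unitary direction, if $\mathbf{Q}'$ satisfies $(M_1)$--$(M_3)$ then $\DM(\mathbf{Q}')$ is projective by Theorem~\ref{th:mmain}, so $\mathrm{id}_{\mathbf{Q}'}$ is a most general unifier and $\mathrm{type}_{\M}(\mathbf{Q}')=1$. For the finitary direction, assume $\mathbf{Q}'$ fails $(M_1)$ but every interval $[x,i(x)]_{Q'}$ satisfies $(M_1)$--$(M_3)$. Then, exactly as in Theorem~\ref{Theo:UnifClassKleene}, the inclusions $u_x\colon[x,i(x)]_{Q'}\to\mathbf{Q}'$ indexed by $x\in\mathrm{min}(\mathbf{Q}')$ are unifiers. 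They are pairwise incomparable, since a factorization $u_x\circ f=u_{x'}$ forces $f$ to be the inclusion $[x',i(x')]_{Q'}\subseteq[x,i(x)]_{Q'}$, hence $x\le x'$ for distinct minimal points; and they are cofinal, since any unifier $u\colon\mathbf{P}\to\mathbf{Q}'$ has $\mathbf{P}$ bounded by $(M_1)$, and choosing $x\in\mathrm{min}(\mathbf{Q}')$ with $x\le u(\bot)$ gives $u(P)\subseteq[x,i(x)]_{Q'}$ because $u(\top)=u(i(\bot))=i(u(\bot))\le i(x)$. Finally there are at least two minimal points, for a unique one would be a bottom $\bot$, forcing $\mathbf{Q}'=[\bot,i(\bot)]_{Q'}$ to satisfy $(M_1)$; thus the $u_x$ form a $\mu$-set of cardinality greater than $1$ and $\mathrm{type}_{\M}(\mathbf{Q}')=\omega$.

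The remaining (nullary) direction is where the work lies, and I would organize it by which condition first breaks. If $(M_2)$ fails in $\mathbf{Q}'$, say at $a\le i(a)$ with no fixed point above $a$, then the core witness $y\le z,a,i(a)$ with $z=i(z)$ supplies the configuration of Lemma~\ref{lemma:unifMcase2} (with its $x,a,b$ instantiated as $y,a,z$), so $\mathrm{type}_{\M}(\mathbf{Q}')=0$. Otherwise $(M_2)$ holds in $\mathbf{Q}'$, whence it holds in every interval, since a fixed point above an element $u$ produced by $(M_2)$ automatically lies in $[x,i(x)]_{Q'}$. In this regime I would show that being in the ``otherwise'' case forces some interval to fail $(M_1)$ or $(M_3)$. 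A failure of $(M_1)$ in an interval yields, after applying $i$ to convert a missing meet into a missing join, two incomparable minimal upper bounds $c,d$ of some $a,b$ with no element between them, together with a fixed point above the interval's bottom from $(M_2)$; this is exactly Lemma~\ref{lemma:unifMcase1}. A failure of $(M_3)$ in an interval that is a lattice yields, via the first-order form of $(M_3)$ recorded after Theorem~\ref{th:mmain}, elements $a,b,c$ with common lower bound $a\wedge b\wedge c$ whose pairwise joins $d,e,f$ sit below fixed points while $a,b,c$ admit no common upper bound $g\le i(g)$ in $Q'$; this is exactly Lemma~\ref{lemma:unifMcase3}. In each subcase $\mathrm{type}_{\M}(\mathbf{Q}')=0$.

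The hard part will be the bookkeeping of this last direction: verifying that the three local obstructions exhaust the ``otherwise'' case. The crux is to rule out a spurious fourth case in which $\mathbf{Q}'$ is a lattice globally violating $(M_3)$ while every self-dual interval satisfies $(M_3)$; I would close this by checking that for such a violation with $x=a\wedge b\wedge c$ the interval $[x,i(x)]_{Q'}$ already fails $(M_3)$, so that $(M_1)$ together with all intervals satisfying $(M_3)$ would force $\mathbf{Q}'$ to satisfy $(M_3)$ and land back in the unitary case. Alongside this sit the routine but careful checks that the extracted tuples satisfy clauses $(i)$--$(iii)$ of the respective lemmas, and in particular that each negative clause $(iii)$ survives passage from an interval to all of $Q'$: any witness $e$ or $g$ for the corresponding positive statement is forced, by the bounds already imposed on $a,b,c$ and on the fixed points via $i$, back into the interval where it was excluded.
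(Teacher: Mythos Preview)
Your proposal is correct and follows essentially the same strategy as the paper: reduce to the core via Lemma~\ref{lemma:dmcore}$(ii)$, handle the unitary and finitary cases exactly as in Theorem~\ref{Theo:UnifClassKleene}, and in the nullary case extract one of the three configurations of Lemmas~\ref{lemma:unifMcase1}--\ref{lemma:unifMcase3}. The only difference is organizational: the paper splits the nullary case by which of $(M_1)$, $(M_2)$, $(M_3)$ first fails in \emph{some interval} (noting that global failures of $(M_2)$ or $(M_3)$ when $(M_1)$ holds are subsumed), whereas you dispose of a global $(M_2)$ failure first and then observe that $(M_2)$ propagates to all intervals, leaving only $(M_1)$ or $(M_3)$ to fail there. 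Your explicit facts (F1), (F2), your verification that the $u_x$ are pairwise incomparable, and your check that the negative clauses $(iii)$ survive passage from an interval to $Q'$ are all correct and fill in details the paper leaves implicit.
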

\begin{proof}
If $\mathbf{Q}'$ satisfies $(M_1)$-$(M_3)$, $\DM(\mathbf{Q}')$ is projective by Theorem~\ref{th:mmain}, 
and $\mathrm{type}_{\M}(\mathbf{Q}')=1$.  Therefore, $\mathrm{type}_{\M}(\mathbf{Q})=1$
because $\mathrm{type}_{\M}(\mathbf{Q})=\mathrm{type}_{\M}(\mathbf{Q}')$ by Lemma~\ref{lemma:dmcore}$(ii)$.

Suppose that $\mathbf{Q}'$ does not satisfy $(M_1)$ and $[x,i(x)]_{Q'}$ satisfies $(M_1)$, $(M_2)$, and $(M_3)$
for all $x \leq' i(x)$ in $\mathbf{Q}'$.  Along the lines of the second part of the proof of Theorem~\ref{Theo:UnifClassKleene}, 
it follows that $\mathrm{type}_{\M}(\mathbf{Q})=\omega$.  

Now suppose that there exist $x \leq' i(x)$ in $\mathbf{Q}'$ such that $[x,i(x)]_{Q'}$ does not satisfy $(M_1)$;
without loss of generality, $x \in {\rm min}(\mathbf{Q}')$.
Then $[x,i(x)]_{Q'}$ with restricted order is not a lattice,
that is, there exist $a,b,c,d \in Q'$ such that $x \leq' a,b \leq' c,d \leq i(x)$
but there does not exist $e \in Q'$ such that $a,b \leq' e \leq' c,d$.
Moreover, by minimality of $x$ and Definition~\ref{def:dmcore}$(i)$, there exists $y \in Q'$ such that $x \leq y=i(y)$.  
Therefore, by Lemma~\ref{lemma:unifMcase1}, $\mathrm{type}_{\M}(\mathbf{Q}')=0$. Thus,
by Lemma~\ref{lemma:dmcore}$(ii)$, $\mathrm{type}_{\M}(\mathbf{Q})=0$.

Next suppose that for all $x \leq' i(x)$ in $\mathbf{Q}'$ the interval $[x,i(x)]_{Q'}$ satisfies $(M_1)$, 
but there exists $x \leq' i(x)$ in $\mathbf{Q}'$ such that $[x,i(x)]_{Q'}$ does not satisfy $(M_2)$; 
without loss of generality, $x \in {\rm min}(\mathbf{Q}')$.  
This case includes the case where $\mathbf{Q}'$ satisfies $(M_1)$ and not $(M_2)$.  
Then there exists $a \leq i(a)$ in $[x,i(x)]_{Q'}$ such that there does not exist $c \in [x,i(x)]_{Q'}$ 
satisfying $a \leq c=i(c)$.  By minimality of $x$ and Definition~\ref{def:dmcore}$(i)$, 
there exists $b=i(b)$ in $Q'$ such that $x \leq b$.  Therefore by 
Lemma~\ref{lemma:unifMcase2}, $\mathrm{type}_{\M}(\mathbf{Q}')=0$. Thus,
by Lemma~\ref{lemma:dmcore}$(ii)$, $\mathrm{type}_{\M}(\mathbf{Q})=0$. 

Finally suppose that for all $x \leq' i(x)$ in $\mathbf{Q}'$ the interval $[x,i(x)]_{Q'}$ satisfies $(M_1)$ and $(M_2)$, 
but there exists $x \leq' i(x)$ in $\mathbf{Q}'$ such that $[x,i(x)]_{Q'}$ does not satisfy $(M_3)$; 
this case includes the case where $\mathbf{Q}'$ satisfies $(M_1)$ and $(M_2)$ but not $(M_3)$.  
Then there exist $x,a,b,c,d,e,f,y,z,w \in Q'$ such that: $x \leq' a,b,c$; 
$a \leq' d,e$; $b \leq' d,f$; $c \leq' e,f$; $d \leq' y=i(y)$; $e \leq' z=i(z)$; $f \leq' w=i(w)$; 
there does not exist $g \in Q'$ such that $a,b,c \leq' g \leq i(g)$.
Therefore, by Lemma~\ref{lemma:unifMcase3}, $\mathrm{type}_{\M}(\mathbf{Q}')=0$. Thus,
by Lemma~\ref{lemma:dmcore}$(ii)$, $\mathrm{type}_{\M}(\mathbf{Q})=0$.
\end{proof}

Lemma~\ref{lemma:unifMcase1}, Lemma~\ref{lemma:unifMcase2}, and Lemma~\ref{lemma:unifMcase3} 
yield examples of De Morgan unification instances having nullary type, 
proving that De Morgan algebras have nullary unification type.


\end{document}